\definecolor{lightgreen}{rgb}{.7,95,.65}
\definecolor{bordeaux}{rgb}{0.75,0.0,0.0}
\newcommand{\bord}{\color{bordeaux}}
\newcommand\blue{\color{blue}}    
\def\RRR{\mathsf{R}}
\DeclareMathOperator*{\argmax}{\arg\!\max}
\definecolor{mbrown}{cmyk}{0.1, 0.5, 0.9, 0.5}
\definecolor{mvert}{cmyk}{0.1,0.7, 0.6, 0.8}
\newtheorem{theorem}{Theorem}[section]
\newtheorem{lemma}[theorem]{Lemma}
\newtheorem{proposition}{Proposition}[section]
\newtheorem{definition}[theorem]{Definition}
\newtheorem{remark}{Remark}
\newcommand{\eN}{\mathbb{N}}
\newcommand{\eR}{\mathbb{R}}
\newcommand{\eZ}{\mathbb{Z}}
\newcommand{\Cp}[1]{\mathbf{C^{#1}}}
\newcommand{\Lp}[1]{\mathbf{L^{#1}}}
\DeclareMathOperator{\sign}{sign}
\newcommand{\norm}[1]{{\left\|#1\right\|}}
\newcommand{\fhi}{\varphi}
\newcommand{\pt}{\partial_t}
\newcommand{\px}{\partial_x }
\newcommand{\rL}{\rho_L}
\newcommand{\rR}{\rho_R}
\newcommand{\rM}{\rho_M}
\newcommand{\mG}{\mathcal{G}}
\newcommand{\LD}{L^1D}
\newcommand{\mF}{\mathcal{F}}
\newcommand{\DX}{\mathsf{h}}
\newcommand{\DT}{\mathsf{k}}
\newcommand{\CCC}{\mathsf{C}}
\newcommand{\xx}{\mathsf{x}}
\begin{document}

\title[Godunov scheme for discontinuous conservation law]{A Godunov--type scheme for a scalar conservation law with space-time flux discontinuity}

\author*[1,2]{\fnm{Kwame Atta} \sur{Gyamfi}}\email{kwame.gyamfi@ingv.it}

%\author[3]{\fnm{Debora} \sur{Amadori}}\email{debora.amadori@univaq.it}

%\author[4]{\fnm{Carlotta} \sur{Donadello}}\email{carlotta.donadello@univ-fcomte.fr}
%\equalcont{These authors contributed equally to this work.}

%\equalcont{These authors contributed equally to this work.}

\affil[1]{ \orgname{Istituto Nazionale di Geofisica e Vulcanologia}, \orgaddress{\street{Roma1}, \city{L'Aquila}, \country{Italy}}}

\affil[2]{\orgdiv{Department of Mathematics}, \orgname{Kwame Nkrumah University of Science and Technology, Kumasi}, \country{Ghana}}

%\affil[3]{\orgdiv{Department of Information Engineering, Computer Science and Mathematics}, \orgname{University of L'Aquila}, \orgaddress{\street{Via Vetoio, loc. Coppito}, \city{L'Aquila}, \postcode{67100}, \state{Abruzzo}, \country{Italy}}}

%\affil[4]{\orgdiv{Laboratoire de Math\'ematiques CNRS UMR6623}, \orgname{Universit\'e de Bourgogne Franche-Comt\'e}, \orgaddress{\street{16 route de Gray}, \city{} \postcode{25030}, \state{Besan{\c c}on Cedex}, \country{France}}}
\equalcont{\small{\textbf{Acknowledgments:} The author acknowledges the support and guidance of Prof. Debora Amadori during their doctoral studies at the University of L'Aquila. Additionally, 
the author expresses gratitude to the research group of Prof. Carlotta Donadello at the University of Franche-Comt\'{e} Besan\c{c}on for their hospitality during research visits.}}
%%==================================%%
%% sample for unstructured abstract %%
%%==================================%%

\abstract{
We present and analyze a new finite volume scheme of Gudonov-type for a nonlinear scalar conservation law whose flux function has a discontinuous coefficient due to time-dependent changes in its sign along a Lipschitz continuous curve. 
}

\keywords{finite volume scheme, scalar conservation laws, discontinuous flux, moving interface, numerical analysis, moving mesh}

%%\pacs[JEL Classification]{D8, H51}

\pacs[MSC Classification]{65M08, 65M50, 65M22, 35A35, 35L65, 35L60}

\maketitle

	\section{Introduction}\label{intro}
In this article, we propose a numerical scheme for solving a scalar conservation law with a flux function that is discontinuous in space and time. The equation writes 
	\begin{equation}\label{eq:main}
	\begin{cases}
		\pt\rho + \px F(t,x,\rho) = 0,\\
		\rho(0,x) = \rho_0(x),\;\rho(t,\pm 1) = 0,
	\end{cases}
	\end{equation}
	in which
	\begin{itemize}
		\item $t\in \eR^+ = [0,\infty)$ is the time variable and $x\in \Omega = ]-1,1[$ is the space variable; 
		 
		\item $(t,x)\mapsto\rho(t,x)$ is the unknown function, and $\rho\in\Lp\infty(\eR^+\times\Omega; [0,1])$;
			 
		\item $F$ representing the flux function is given by
				\begin{equation}\label{eq:def-of-F}
				F(t,x,\rho) \,\dot =\, \sign(x-\xi(t))f(\rho)	    
				\end{equation}
				where %\footnote{try to avoid the strict concavity assumption. %Assume \hl{only concavity}.
				%Use the assumptions of \cite{AndreianovGoatinSeguin2010}:  there exists $\bar u\in (0,1)$ such that $f'(u) (\bar u - u) > 0$ for a.e. $u\in (0,1)$\,, and  $f(0)=f(1)=0$; they do not assume concavity}
\begin{equation}\label{eq:hyp-on-f}
    f:[0,1] \to \eR^+ \mbox{ is 
    concave}\,, \quad f(0) = 0 = f(1)\,, \quad \max_{\rho\in [0,1]}{f(\rho)}>0\,,\quad %\exists
    \lim_{\rho\to0+}\frac{f(\rho)}{\rho}\in\eR;
\end{equation}
\begin{equation}\label{eq:hyp-on-xi}
			     \xi:\eR^+\to \Omega \mbox{ is Lipschitz continuous with} % with Lipschitz constant }L_\xi
		%	    \,,
       \quad 
			    \left[\inf_{\eR^+} \xi, \, \sup_{\eR^+} \xi\right] \subset (-1,1)\,.
			  %  ,\qquad \xi(t)\to0
			   % \mbox{ as }t\to\infty\,.
			\end{equation}
	\end{itemize}

As equation \eqref{eq:main} is a fundamental law, most partial differential equations (PDEs) problems arising in the physical and engineering sciences can be formulated in its form. Examples can be found in (1) porous media: modeling the two-phase flow in porous media \cite{Gimse1992, Jaffre1995} and continuous sedimentation in ideal clarifier-thickener units \cite{BurgerR2004}; (2) traffic flow: modeling a toll gate along a highway \cite{Colombo2007} and traffic flow with changing road surface conditions \cite{RBurger2003traffic, Mochon1987traffic}; (3) ion etching in semiconductor device fabrication \cite{DSRoss1988}; (4) pedestrian flow models \cite{HUGHES2002507,DeboraGoatinRosini2014,El-Khatib2013Weak}.

More precisely, equation \eqref{eq:main} derives from the one-dimensional Hughes model \cite{HUGHES2002507} of pedestrian flow in a narrow corridor with fixed exits at $x= \pm 1$. However, in contrast to the Hughes model, the time-dependent flux interface $\xi(t)$ considered in this work is not required to satisfy the implicit relation involving a monotone cost function $c(\rho)$ 
\begin{equation}
    \label{eq:implicit-xi-Hughes}
    \int_{-1}^{\xi(t)}c(\rho(t,x))dx=\int_{\xi(t)}^{1}c(\rho(t,x))dx,\quad \forall~t \geq 0.
\end{equation}
Therefore, the equation considered in this work assumes no nonlocal conditions on the flux via the turning curve $x=\xi(t)$ even though the numerical scheme proposed here can easily be adapted to solve Hughes' model. It is worth noting that in the context of this model, the function $\xi(t)$ is commonly known as the turning curve and represents the points where pedestrians change direction to optimize the distance or time required to reach exits. See \cite{DeboraGoatinRosini2014}. In this paper, we will use the term "flux interface" or simply "the interface" to refer to $\xi$. Moreover, we will refer to a scalar conservation law with discontinuous flux as the discontinuous flux problem.

Research works dedicated to solutions to discontinuous flux problems have been extensively documented in the literature. See articles \cite{Jaffre1995,adimurthi2004godunov, dutta2014monotone,Towers2001,Bachmann2006,karlsen_risebro_towers,Andreianov2011,Diehl1995,Mishra2005, andreianov2012godunov,Xin_Shi2008, Bachmann2006}. It is a well-known fact that even if the flux function is continuous and the initial data do not contain jump discontinuities, the global Cauchy problem for a scalar conservation law does not admit classical solutions that are valid for all times $t> 0$. Thus, there exists a finite time beyond which classical solutions develop discontinuities, such as shock waves. The occurrence of these irregularities in the solution, which is severe for equations with discontinuous coefficients such as \eqref{eq:main}, is largely attributed to the nonlinearity of the flux function. For example, equation \eqref{eq:main} generalizes to the system
    \begin{equation}\label{eq:CL-systems}
        \pt\rho + \px(k(t,x)f(\rho)) = 0, \quad k_t = 0 \end{equation}
whose speed matrix (or Jacobian matrix) after writing \eqref{eq:CL-systems} in nonconservative form has a repeated zero eigenvalue for some values of $(\rho, k)$.
For this reason, the appropriate setting for an admissible solution is the space of discontinuous functions, such as the space of $ \Lp{\infty}$ functions, in which a shock wave is well-defined. Such solutions are sought in the weak (or distributional) sense and hence are referred to as weak solutions \cite{Evans:1529903}. Popular methods to construct solutions include numerical approximation methods based on finite volume/difference schemes \cite{Andreianov_Sylla2023FV, BurgerR2004, Karlsen_Towers2004, Karlsen_Towers2017}, wavefront tracking algorithm \cite{PaolaMimault2013, Gimse1992,Coclite_Risebro2005}, and others. In this paper, we discuss the existence of entropy weak solution to \eqref{eq:main} by constructing the approximate solution via a finite volume method exclusive to this class of equations.

In general, deriving an existence theory for a class of hyperbolic conservation law relies on {\em a priori} estimates with which a compactness argument can be used 'to pass to the limit' in the solution through an appropriate compactness theory such as Helly's theory. However, for selected time-dependent discontinuous flux problems, such as \eqref{eq:main}, such estimates are difficult to obtain or nonexistent, even after transforming approximate solutions via an appropriate singular mapping, as was done in \cite{Coclite_Risebro2005}. In such situations, alternative approaches that are free from these estimates, including compensated compactness (see \cite{Karlsen_Towers2004}), measure-valued solutions (see \cite{AndreianovGoatinSeguin2010}), and kinetic formulation (see \cite{dotti2019}), provide handy tools to establish rigorous existence results. In the latter two cases, it is enough to use only standard estimates, such as the so-called "weak BV estimates," to prove the convergence of the approximate solution to the entropy process solution under appropriate CFL conditions \cite{Eymard2000, eymard2007analysis}.

The notion of entropy solutions for discontinuous flux problems does not follow the standard notions suitable for a generic conservation law with continuous coefficients. The main ideas of entropy solutions for discontinuous flux problems rely on applying Kruzkhov's admissibility conditions away from the flux interface but enforcing additional "entropy conditions" at the interface to aid the selection of unique and admissible solutions. It has been argued in \cite{Mishra2005} that as a result of this occurrence, there exists an infinite number of $L^1$ contractive semigroups, each associated with an interface connection, for a variety of discontinuous flux problems. The principal features of entropy solutions are the shape of the flux and the traces in the solution at the interface (i.e., the so-called $A-B$ entropy connection) \cite{andreianov2015interface, dutta2014monotone, andreianov2012godunov}. In the work of \cite{Andreianov2011}, these concepts were unified by the introduction of a new framework called admissibility germs $\mG$ (or "germs" for short) within which the entropy solutions described above reduced to elementary solutions of piecewise constant nature provided their traces satisfy a form of a Rankine-Hugoniot condition at the interface. In our context, this family of piecewise constant constant solutions, typically described by ordered pairs $ (p_l, p_r)$ of the form $p_l\chi_{x<\xi(t)} + p_r\chi_{x>\xi(t)}$, with $p_l$ and $p_r$ in $[0,R]$ are maximal and complete.

In this context, we present and analyze a finite volume scheme to solve a scalar conservation law with a single time-dependent flux discontinuity that switches the sign of the flux coefficient between $+1$ and $-1$ at each $t>0$. We deal with the discontinuity by introducing a moving mesh near $\xi$ in the discrete space-time domain and then adapt the value of the flux there based on the slope $\dot{\xi}$. Standard finite volume schemes for continuous flux problems involve discretizing the space domain into numerical cells bounded by grid points. Then the solution for each discrete time is approximated by solving a 'local' Riemann problem at each grid point whose results are unionized to obtain the solution at the next time step. Here, the moving mesh consists in removing grid points nearest to $\xi$ so that the CFL condition is not violated. This approach introduced by \cite{ZHONG1996192}, has been used to study traffic flow models on a highway with a mobile bottleneck \cite{dellemonache2014, chalons2018conservative, delle2016numerical}. Indeed, the scheme proposed in these works was only analyzed in a mathematically rigorous in \cite{sylla2021lwr} where existence and uniqueness results were obtained in a fashion similar to ours. However, the scheme of \cite{sylla2021lwr} uses the Engquist-Osher flux on standard mesh away from the moving interface and the Godunov numerical fluxes at the interfaces with the moving mesh and a flux crossing condition. This scheme was later extended in \cite{Andreianov_Sylla2023FV}, where the existence of solutions for conservation laws with multiple space-time flux interface discontinuities. Indeed, any standard 'entropy' numerical flux such as the Godunovs, Lax Friederich's, etc, works well for treating solutions away from the interfaces as long as it is monotone and consistent.  We would like to highlight that the analysis conducted in \cite{Andreianov_Sylla2023FV} bears similarity to our own, and it may be considered more comprehensive, as it accounts for multiple moving interfaces. Nevertheless, considering the scarcity of contributions addressing space-time discontinuous flux problems, our work serves as an additional and valuable contribution to this field.

The remainder of the paper is organized into four main sections. Section \ref{sec:ProblemSettings} focuses on the theoretical aspect of the problem and discusses the Riemann problem for \eqref{eq:main}, including the Riemann solver at $\xi$. Here, we present the framework of admissibility germs and prove the relevant properties that could later be extended to obtain a comprehensive theory of the existence of weak entropy solutions in the next article. The next section, Section \ref{sec:FiniteVolumeScheme}, presents the numerical scheme and the mesh adaptations (the moving mesh), detailing all possible cases due to changes in slope of $\xi$. Furthermore, approximate entropy inequalities and a uniform $\Lp{\infty}$ bound on the numerical approximations are also presented. Finally, in Section \ref{sec:NumericalExamples}, we furnish details about the chosen examples, present their numerical simulations, and use them to explore the numerical convergence of the scheme achieved by calculating the error $\Lp{1}$, which is provided for each example.

\section{Problem setting}\label{sec:ProblemSettings}
We start this section by defining a weak entropy solution of \eqref{eq:main}, which will be featured throughout the paper. 
%Following \cite{DeboraGoatinRosini2014}, we look for weak solutions in the following sense.
\begin{definition}\label{def:Kruzhov-solutions} 
Assume that $f$, $\xi$ satisfy \eqref{eq:hyp-on-f}, \eqref{eq:hyp-on-xi}, respectively. 
We say that map $(t,x)\mapsto\rho(t,x)$ is a weak entropy solution to the initial-boundary value problem \eqref{eq:main}, 
if $\rho$ is in $\Cp{0}\left([0,+\infty[; ~\Lp{{\blue\infty}}\left(\Omega; ~[0,1]\right)\right)$ and for any $c\in [0,1]$, 
and any test function $\varphi \in \mathbf{C}^{\infty}_{\mathbf{c}}\left(\eR^2; ~[0,+\infty[\right)$, 
the following Kru\v{z}khov-type entropy inequality holds:
	\begin{align} \label{eq:weak_solution}
	&\int_{0}^{+\infty}\!\!\int_{-1}^{1}\left[|\rho - c|\partial_t\varphi + \mathcal{F}(t,x,\rho,c)\partial_x\varphi\right]\text{d}x\text{d}t + \int_{-1}^{1}|\rho_0(x)-c|\, \varphi(0,x)\text{d}x\nonumber\\
	&+ \int_{0}^{+\infty}\left[f(\rho(t,1-))-f(c)\right]\varphi(t,1)\text{d}t + \int_{0}^{+\infty}\left[f(\rho(t,-1+))-f(c)\right]\varphi(t,-1)\text{d}t\\  &+
	2\int_{0}^{+\infty}f(c)\, \varphi(t,\xi(t))\text{d}t 
	 \geq 0, \nonumber 
	\end{align}
	where, recalling \eqref{eq:def-of-F},  
	\begin{align}\label{def:cal-F}
	    \mathcal{F}(t,x,\rho,c) \dot = & \sign(\rho - c)\left[F(t,x,\rho)-F(t,x,c)\right]\\
	    = & \sign(\rho - c) \sign(x-\xi(t))\left[f(\rho)-f(c)\right]. \nonumber
	\end{align}
\end{definition}
The first line in \eqref{eq:weak_solution} originates from the Kru\v{z}kov definition of entropy weak solution as would be in the case of a Cauchy problem, \cite{Kru_kov_1970}. The two terms in the second line come from the boundary condition introduced by Bardos et al. in \cite{c_bardos_1979} whereas the term in the last line accounts for the traces in the solution along the discontinuity in the flux. 

\subsection{The Riemann problem at turning curve}\label{sec:riemann} %In this section we assume that 
The solution of the Riemann problem serves as building block for the selection of a numerical scheme appropriate for the equation under consideration. Since our equation contains a jump at the flux interface, it is necessary to construct a Riemann solver at the interface. In this section, we present the Riemann solver for \eqref{eq:main} and detail the types of admissible elementary waves that are present in solution at $\xi(t)$.

Let $\xi(t) = \alpha t$ with $\alpha\in\eR$, and consider the Riemann problem
	\begin{equation}\label{eq:CL_alpha}
		\pt\rho + \px \left(\sign(x-\alpha t)f(\rho)\right) = 0,\qquad x\in \eR, \quad t>0
	\end{equation}
%that is, with initial data %of the form
\begin{equation}
\label{eq:riemann_data}
\rho_0(x) = \begin{cases}
 \rho_L, &\text{if } x< 0, \\
 \rho_R, &\text{if } x> 0
\end{cases}
\end{equation}
with $\rho_L$, $\rho_R\in [0,1]$. We look for a self-similar solution of the problem \eqref{eq:CL_alpha}, \eqref{eq:riemann_data}
according to the Definition~\ref{def:Kruzhov-solutions}, that we will denote by $\mathcal{R}^{\alpha}(\rL, \rR)(t,x)$.

Also, we will denote by $\mathcal{R}^{\pm}(\rL, \rR)(t,x)$ the classical Riemann solver for %with $\pm f$.
		\begin{equation*}
		\pt\rho \pm  \px f(\rho) = 0,\qquad x\in \eR, \quad t>0
	\end{equation*}
with initial data \eqref{eq:riemann_data} at $t=0$.	

%Since Riemann's problems have self-similar solutions, we shall write $\mathcal{R}^{\alpha}(\rL, \rR)(x/t)$ instead of 
%$\mathcal{R}^{\alpha}(\rL, \rR)(t,x)$ and $\mathcal{R}^{\pm}(\rL, \rR)(x/t)$ instead of $\mathcal{R}^{\pm}(\rL, \rR)(t,x)$. 
%as building block problem to understand the solution at $x=\xi(t)$ for every $t>0.$ 

% 
%For the remaining parts of the paper, w
%We shall use the following notation:
%\begin{itemize}
%	\item , for the Riemann solver at the turning point, that is, the solution of \eqref{eq:CL_alpha}, \eqref{eq:riemann_data}
	%that will be defined here below;
%	\item  % depending on which side of the discontinuity the solution is provided. 
%\end{itemize}
%Since Riemann's problems have self-similar solutions, we shall write $\mathcal{R}^{\alpha}(\rL, \rR)(x/t)$ instead of 
%$\mathcal{R}^{\alpha}(\rL, \rR)(t,x)$ and $\mathcal{R}^{\pm}(\rL, \rR)(x/t)$ instead of $\mathcal{R}^{\pm}(\rL, \rR)(t,x)$. 

Let's define 
\begin{equation*}
    v(\rho) ~\dot =
    \begin{cases} ~\frac{f(\rho)}\rho\,, & \rho\in (0,R]\\
    \lim_{\rho\to 0+} \frac{f(\rho)}\rho  & \rho=0\,.
    \end{cases}
\end{equation*}
After the assumption \eqref{eq:hyp-on-f} on $f$, the function $v(\rho)$ is non-increasing and $v(0)$ is well defined and finite. %$\lim_{\rho\to 0+} v(\rho)$
%exists, 
%being possibly $+\infty$.

In practice, we can distinguish between two cases:  either $\alpha \in [-v(\rho_L), v(\rho_R)]$ or not. 

%either $\alpha = 0$ or $\alpha\neq 0$.
%\subsubsection

\begin{itemize}
\item {{\fbox{$-v(\rho_L)\le \alpha\le v(\rho_R)$
%$\alpha = 0$.
}} }
In this case, for any couple $(\rL, \rR)\in [0,1]^2$, the function
%in \eqref{eq:riemann_data},  
\begin{equation}
\mathcal{R}^{\alpha}(\rL, \rR)(t,x) = 
\begin{cases}
\mathcal{R}^{-}(\rL, 0)(t,x), &\quad\text{ for } x<0,\\
\mathcal{R}^{+}(0, \rR)(t,x), &\quad\text{ for } x>0.\\
\end{cases}
\end{equation}
is a solution to \eqref{eq:CL_alpha}, \eqref{eq:riemann_data} in the sense of Definition~\ref{def:Kruzhov-solutions}.

\begin{comment}
It should be noticed that if $(\rL, \rR)\in [0,R)^2$ the solution actually experiences a non-classical 
undercompressive shock at $x=0$ joining vacuum states. This means that the solution is continuous but characteristics emanate away from the vertical line $x=0$ from both sides. 

When $\rL$ or $\rR= R$ the solution has the same structure, but the jump discontinuity at $x=\xi(t)$ is only one-side undercompressive. 
\end{comment}

More precisely, 
the solution takes the form %rewritten as
\begin{equation}\label{eq:RS-with-vacuum}
\mathcal{R}^{\alpha}(\rL, \rR)(t,x) = 
\begin{cases}
\rL, &\quad\text{ for } x<-v(\rL)t,\\
0, &\quad\text{ for } x/t\in \left(-v(\rL), v(\rR) \right),\\
\rR,  &\quad\text{ for } x>v(\rR)t,\\
\end{cases}
\end{equation}
because the waves joining $\rL$ to $0$ and $0 $ to $\rR$, with flux $-f$ and $f$ respectively, are necessarily shock discontinuities. See Figure \ref{fig:riemann-at-xi}.

%Indeed, \hl{TO DO - Debora - from here}

\begin{comment}
 In the special case $\rL = \rR = R$ the solution is continuous and characteristics lines run into   the line $x=0$ from both sides. 
 \end{comment}
\begin{figure}[ht]
    \centering
\begin{tikzpicture}[x=0.7pt,y=0.7pt,yscale=-0.9,xscale=0.9]

\draw    (26.83,149) -- (302.32,149) ;
\draw [shift={(304.32,149)}, rotate = 180] [color={rgb, 255:red, 0; green, 0; blue, 0 }  ][line width=0.75]    (10.93,-3.29) .. controls (6.95,-1.4) and (3.31,-0.3) .. (0,0) .. controls (3.31,0.3) and (6.95,1.4) .. (10.93,3.29)   ;
%Curve Lines [id:da7958851302469021] 
\draw [line width=1.5]    (47.93,148) .. controls (107.91,30.5) and (199.23,-3.5) .. (282.28,148) ;
%Curve Lines [id:da3421326459525955] 
\draw [line width=1.5]    (47.93,148) .. controls (118.93,258.5) and (189,285.5) .. (282.28,148) ;
%Straight Lines [id:da5019968915329918] 
\draw [color={rgb, 255:red, 208; green, 2; blue, 27 }  ,draw opacity=1 ][line width=2.25]    (154,242) -- (215.37,64) ;
%Straight Lines [id:da25850246373915886] 
\draw [color={rgb, 255:red, 65; green, 117; blue, 5 }  ,draw opacity=1 ][line width=1.5]  [dash pattern={on 5.63pt off 4.5pt}]  (215.37,64) -- (47.93,148) ;
%Straight Lines [id:da19869378420438433] 
\draw [color={rgb, 255:red, 30; green, 6; blue, 220 }  ,draw opacity=1 ] [dash pattern={on 0.84pt off 2.51pt}]  (152.39,151) -- (153.96,238) ;
%Straight Lines [id:da7120413907110525] 
\draw [color={rgb, 255:red, 30; green, 6; blue, 220 }  ,draw opacity=1 ] [dash pattern={on 0.84pt off 2.51pt}]  (86.26,151) -- (86.26,197) ;
%Straight Lines [id:da8605629180527308] 
\draw [color={rgb, 255:red, 26; green, 7; blue, 236 }  ,draw opacity=1 ] [dash pattern={on 0.84pt off 2.51pt}]  (215.37,64) -- (215.37,150) ;
%Straight Lines [id:da7278336420505107] 
\draw    (368,144) -- (638.5,143.01) ;
\draw [shift={(640.5,143)}, rotate = 539.79] [color={rgb, 255:red, 0; green, 0; blue, 0 }  ][line width=0.75]    (10.93,-3.29) .. controls (6.95,-1.4) and (3.31,-0.3) .. (0,0) .. controls (3.31,0.3) and (6.95,1.4) .. (10.93,3.29)   ;
%Straight Lines [id:da405089335241684] 
\draw    (374.5,220) -- (374.63,44) ;
\draw [shift={(374.63,42)}, rotate = 450.04] [color={rgb, 255:red, 0; green, 0; blue, 0 }  ][line width=0.75]    (10.93,-3.29) .. controls (6.95,-1.4) and (3.31,-0.3) .. (0,0) .. controls (3.31,0.3) and (6.95,1.4) .. (10.93,3.29)   ;
%Curve Lines [id:da9270282018432563] 
\draw [line width=1.5]    (376,143) .. controls (436.68,25.5) and (531.9,-8.5) .. (615.91,143) ;
%Curve Lines [id:da4626644833829201] 
\draw [line width=1.5]    (375,144) .. controls (446.83,254.5) and (521.55,280.5) .. (615.91,143) ;
%Straight Lines [id:da7863531496526028] 
\draw [color={rgb, 255:red, 208; green, 2; blue, 27 }  ,draw opacity=1 ][line width=2.25]    (518.5,233) -- (458.5,51.5) ;
%Straight Lines [id:da3537354346797572] 
\draw [color={rgb, 255:red, 65; green, 117; blue, 5 }  ,draw opacity=1 ][line width=1.5]  [dash pattern={on 5.63pt off 4.5pt}]  (518.5,233) -- (375,144) ;
%Straight Lines [id:da2523113281188729] 
\draw [color={rgb, 255:red, 30; green, 6; blue, 220 }  ,draw opacity=1 ] [dash pattern={on 0.84pt off 2.51pt}]  (516.5,144.5) -- (518.5,233) ;
%Straight Lines [id:da47554495167947364] 
\draw [color={rgb, 255:red, 30; green, 6; blue, 220 }  ,draw opacity=1 ] [dash pattern={on 0.84pt off 2.51pt}]  (563.5,74.5) -- (563.5,141.5) ;
%Straight Lines [id:da8268972321639909] 
\draw [color={rgb, 255:red, 26; green, 7; blue, 236 }  ,draw opacity=1 ] [dash pattern={on 0.84pt off 2.51pt}]  (458.5,51.5) -- (459,142.5) ;
%Straight Lines [id:da11405810076781742] 
\draw    (47.5,224) -- (47.63,48) ;
\draw [shift={(47.63,46)}, rotate = 450.04] [color={rgb, 255:red, 0; green, 0; blue, 0 }  ][line width=0.75]    (10.93,-3.29) .. controls (6.95,-1.4) and (3.31,-0.3) .. (0,0) .. controls (3.31,0.3) and (6.95,1.4) .. (10.93,3.29)   ;
%Straight Lines [id:da23168903073770997] 
\draw [color={rgb, 255:red, 11; green, 58; blue, 214 }  ,draw opacity=1 ][line width=0.75]    (458.5,51.5) -- (563.5,74.5) ;

% Text Node
\draw (16.37,66.4) node [anchor=north west][inner sep=0.75pt]  [font=\footnotesize]  {$f( \rho )$};
% Text Node
\draw (284.28,151.4) node [anchor=north west][inner sep=0.75pt]    {$\rho $};
% Text Node
\draw (84.52,129.9) node [anchor=north west][inner sep=0.75pt]  [font=\footnotesize,color={rgb, 255:red, 50; green, 18; blue, 224 }  ,opacity=1 ]  {$\rho _{L}$};
% Text Node
\draw (209.71,151.9) node [anchor=north west][inner sep=0.75pt]  [font=\footnotesize,color={rgb, 255:red, 33; green, 13; blue, 239 }  ,opacity=1 ]  {$\rho _{R}$};
% Text Node
\draw (146.65,133.4) node [anchor=north west][inner sep=0.75pt]  [font=\footnotesize,color={rgb, 255:red, 33; green, 13; blue, 239 }  ,opacity=1 ]  {$\rho _{M}$};
% Text Node
\draw (169.58,211.76) node [anchor=north west][inner sep=0.75pt]  [font=\footnotesize,color={rgb, 255:red, 33; green, 13; blue, 239 }  ,opacity=1 ,rotate=-288.5,xslant=-0.08]  {$\textcolor[rgb]{0.82,0.01,0.11}{\alpha  >v}\textcolor[rgb]{0.82,0.01,0.11}{(}\textcolor[rgb]{0.82,0.01,0.11}{\rho }\textcolor[rgb]{0.82,0.01,0.11}{_{R}}\textcolor[rgb]{0.82,0.01,0.11}{)}$};
% Text Node
\draw (113.83,96.27) node [anchor=north west][inner sep=0.75pt]  [font=\footnotesize,color={rgb, 255:red, 33; green, 13; blue, 239 }  ,opacity=1 ,rotate=-330.58]  {$\textcolor[rgb]{0.25,0.46,0.02}{v}\textcolor[rgb]{0.25,0.46,0.02}{(}\textcolor[rgb]{0.25,0.46,0.02}{\rho }\textcolor[rgb]{0.25,0.46,0.02}{_{R}}\textcolor[rgb]{0.25,0.46,0.02}{)}$};
% Text Node
\draw (343.1,66.4) node [anchor=north west][inner sep=0.75pt]  [font=\footnotesize]  {$f( \rho )$};
% Text Node
\draw (622, 147.2) node [anchor=north west][inner sep=0.75pt]    {$\rho $};
% Text Node
\draw (556.94,141.4) node [anchor=north west][inner sep=0.75pt]  [font=\footnotesize,color={rgb, 255:red, 50; green, 18; blue, 224 }  ,opacity=1 ]  {$\rho _{R}$};
% Text Node
\draw (439.7,142.4) node [anchor=north west][inner sep=0.75pt]  [font=\footnotesize,color={rgb, 255:red, 33; green, 13; blue, 239 }  ,opacity=1 ]  {$\rho _{M}$};
% Text Node
\draw (509.27,127.4) node [anchor=north west][inner sep=0.75pt]  [font=\footnotesize,color={rgb, 255:red, 33; green, 13; blue, 239 }  ,opacity=1 ]  {$\rho _{L}$};
% Text Node
\draw (483.06,145.97) node [anchor=north west][inner sep=0.75pt]  [font=\footnotesize,color={rgb, 255:red, 33; green, 13; blue, 239 }  ,opacity=1 ,rotate=-71.57,xslant=-0.08]  {$\textcolor[rgb]{0.82,0.01,0.11}{\alpha < -v}\textcolor[rgb]{0.82,0.01,0.11}{(}\textcolor[rgb]{0.82,0.01,0.11}{\rho }\textcolor[rgb]{0.82,0.01,0.11}{_{L}}\textcolor[rgb]{0.82,0.01,0.11}{)}$};
% Text Node
\draw (438.9,189.08) node [anchor=north west][inner sep=0.75pt]  [font=\footnotesize,color={rgb, 255:red, 33; green, 13; blue, 239 }  ,opacity=1 ,rotate=-31.86]  {$\textcolor[rgb]{0.25,0.46,0.02}{-v}\textcolor[rgb]{0.25,0.46,0.02}{(}\textcolor[rgb]{0.25,0.46,0.02}{\rho }\textcolor[rgb]{0.25,0.46,0.02}{_{L}}\textcolor[rgb]{0.25,0.46,0.02}{)}$};
\end{tikzpicture}

    \caption{\small{%Non classical 
    The Riemann solvers at turning curve for the initial data \eqref{eq:riemann_data}, with $\alpha>v(\rR)$ (left) and $\alpha<-v(\rL)$ (right).  %
     %if $\rL < \rR$.
     }
    }
    \label{fig:riemann-at-xi}
\end{figure}
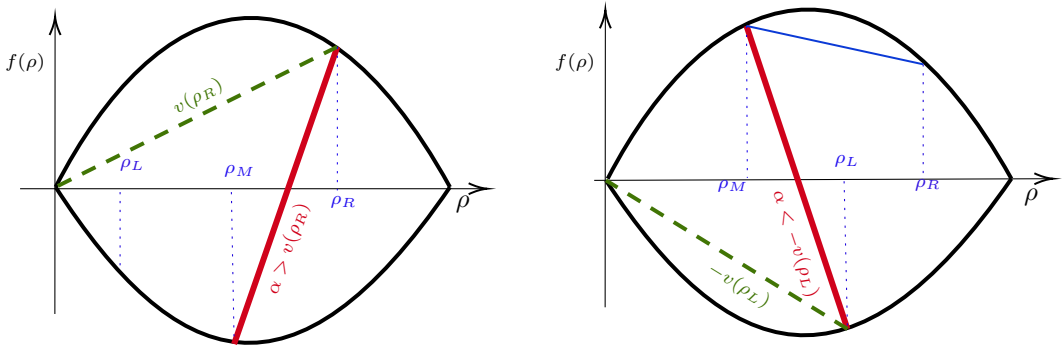
\item{\fbox{$\alpha\ge v(\rho_R)$}}
%%%%$\alpha < -v(\rho_L)$ or  
%$\alpha = 0$.
%}}%\textbf{Case $\alpha \neq 0$.}}
For this case %, 
%we discuss the cases where $ \alpha >0$ and $\alpha<0$.
% In this case it might happen that $\alpha \notin  \left(-v(\rL), v(\rR) \right)$, so that we need to introduce one intermediate state $\rM$ to construct a weak solution.
%More precisely, if $\alpha \in  \left(-v(\rL), v(\rR) \right)$ the Riemann solver is exactly as in \eqref{eq:RS-with-vacuum}.
%However, if $\alpha >  v(\rR) $ 
we have
\begin{equation}\label{RS-without-vacuum}
\mathcal{R}^{\alpha}(\rL, \rR)(t,x) = 
\begin{cases}
\mathcal{R}^{-}(\rL, \rM) (t,x), &\quad\text{ for } x<\alpha t,\\
\rR,  &\quad\text{ for } x >\alpha t,\\
\end{cases}
\end{equation}
where $\rM$ is the only density value in $[0, \rR)$ such that the jump condition across $\xi(t)=\alpha t$,
\begin{equation}\label{eq:rhom}
    \alpha = \frac{f(\rR) + f(\rM)}{\rR-\rM}.
\end{equation}
is satisfied. % for small $t>0$. 

\item{\fbox{$\alpha < -v(\rho_L)$}}  %$\alpha = 0$.
On the other hand, if $\alpha <-v(\rL),$ the Riemann solver is similarly written
\begin{equation}\label{eq:RS-without-vacuum-neg-alpha}
\mathcal{R}^{\alpha}(\rL, \rR)(t,x) = 
\begin{cases}
\rL,  &\quad\text{ for } x<\alpha t,\\
\mathcal{R}^{+}(\rM, \rR) (t,x), &\quad\text{ for } x> \alpha t,
\end{cases}
\end{equation}
where the intermediate state value $\rM\in [0,\rL)$ satisfies
\begin{equation}\label{eq:rhom-neg-alpha}
    \alpha = \frac{f(\rM) + f(\rL)}{\rM-\rL}.
\end{equation}
\end{itemize}

See Figure \ref{fig:riemann-at-xi} above. The following lemma 
%proved in \cite{Amadori2019} 
shows that the intermediate state value is unique. 

\begin{lemma}\label{lem:2.1}
	Let $f$ satisfy \eqref{eq:hyp-on-f}. 
	%%be a concave function. 
	Then for any given  $\rR\in (0,1)$ and %$\alpha\in (f(\rR)/\rR, f'(0))$  
	$\alpha> v(\rR)$ there exists a unique $\rM\in (0, \rR)$ which satisfies \eqref{eq:rhom}.  %\hl{If $\rho_R=1$??}
\end{lemma}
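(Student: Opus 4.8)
The plan is to recast the defining relation \eqref{eq:rhom} as a scalar fixed-point equation and to separate existence (via the intermediate value theorem) from uniqueness (via strict monotonicity). Concretely, I would introduce
\[
g(s) \;\dot=\; \frac{f(\rR) + f(s)}{\rR - s}, \qquad s \in [0, \rR),
\]
so that \eqref{eq:rhom} reads $g(\rM) = \alpha$. I would first record that the hypotheses \eqref{eq:hyp-on-f} force $f > 0$ on the open interval $(0,1)$: for $s\in(0,1)$, applying concavity to $s = \lambda\cdot 0 + (1-\lambda)\cdot 1$ together with $f(0)=f(1)=0$ and $\max f > 0$ yields $f(s) > 0$; in particular $f(\rR) > 0$ since $\rR\in(0,1)$. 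Consequently $g$ is continuous on $[0,\rR)$.

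For existence I would examine the behaviour of $g$ at the ends of its domain. At $s=0$ one has $g(0) = f(\rR)/\rR = v(\rR)$, whereas as $s\to\rR^-$ the numerator tends to $2f(\rR)>0$ and the denominator to $0^+$, so $g(s)\to+\infty$. Since $\alpha > v(\rR)$, the intermediate value theorem produces some $\rM\in(0,\rR)$ with $g(\rM)=\alpha$, the endpoint $0$ being excluded precisely because $g(0)=v(\rR)<\alpha$.

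The core of the statement is uniqueness, which I would obtain by proving that $g$ is strictly increasing on $[0,\rR)$; this is where concavity enters and must be used \emph{without} assuming differentiability. For $0\le a < b < \rR$, writing $c=\rR$, the difference $g(b)-g(a)$ has the same sign as
\[
N \;=\; f(c)(b-a) + f(b)(c-a) - f(a)(c-b),
\]
since the common denominator $(\rR-b)(\rR-a)$ is positive. The concavity inequality at the intermediate point $b$, namely $f(b)(c-a)\ge (c-b)f(a)+(b-a)f(c)$, substituted into $N$ cancels the $f(a)$ contributions and leaves $N \ge 2f(c)(b-a) = 2f(\rR)(b-a) > 0$. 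Hence $g$ is strictly increasing, and the solution of $g(\rM)=\alpha$ is unique.

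I expect the only delicate point to be this monotonicity step. Since $f$ is merely concave, possibly with flat segments and corners, a derivative-based argument is unavailable, and the strictness must come not from strict concavity of $f$ but from the boundary fact $f(\rR)>0$. The algebraic substitution above sidesteps differentiability entirely and isolates exactly this mechanism. Geometrically, $g(s)$ is the slope of the chord joining the point $(s,-f(s))$ on the convex graph of $-f$ to the fixed point $(\rR,f(\rR))$, which lies strictly above the right endpoint of that graph; this makes monotonicity intuitive, but the inequality for $N$ provides the self-contained argument I would write down.
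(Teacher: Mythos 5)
Your proof is correct, and its overall architecture coincides with the paper's: both introduce the chord-slope function $\Phi(\rho)=\bigl(f(\rR)+f(\rho)\bigr)/(\rR-\rho)$ (your $g$), both record the endpoint behaviour $\Phi(0+)=v(\rR)$ and $\Phi(\rR-)=+\infty$, and both reduce existence and uniqueness to strict monotonicity of this map. The genuine difference lies in how that monotonicity is obtained. The paper differentiates, writes $\Phi'(\rho)=\bigl[f'(\rho)(\rR-\rho)+f(\rR)+f(\rho)\bigr]/(\rR-\rho)^2$, and bounds the numerator below by $2f(\rR)$ via the tangent-line inequality for concave functions; this tacitly assumes $f'$ exists, which is not guaranteed by \eqref{eq:hyp-on-f} alone (though it is consistent with the paper's use of $\|f'\|_\infty$ elsewhere). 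Your three-point argument --- comparing $g(b)-g(a)$ to $N=f(\rR)(b-a)+f(b)(\rR-a)-f(a)(\rR-b)$ and invoking the chord form of concavity at $b$ to get $N\ge 2f(\rR)(b-a)>0$ --- is the nondifferentiable analogue of exactly the same estimate, and correctly isolates the point that strictness comes from $f(\rR)>0$ rather than from strict concavity. What your version buys is validity under the bare hypothesis \eqref{eq:hyp-on-f}; what the paper's buys is brevity. One cosmetic remark: your justification that $f>0$ on $(0,1)$ by writing $s$ as a convex combination of the endpoints $0$ and $1$ only yields $f(s)\ge 0$; to get strict positivity one should instead interpolate between an endpoint and the point where the positive maximum of $f$ is attained. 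This does not affect the argument.
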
 

\begin{proof} Define the map
$$\Phi(\rho) = \dfrac{f(\rho_R) + f(\rho)}{\rho_R-\rho}\,,\qquad \rho\in (0, \rR)\,.$$
  It is immediate to see that
  \begin{equation}
    \lim_{\rho\to 0+}\Phi(\rho) = f(\rho_R)/\rho_R = v(\rho_R), \qquad  \lim_{\rho\to \rho_R-}\Phi(\rho) = + \infty,
  \end{equation}
  and, by convexity of $f$, we have that
  \begin{equation}
    \Phi'(\rho) = \frac{f'(\rho) (\rho_R-\rho) + f(\rho_R) + f(\rho)}{(\rho_R-\rho)^2} \geq\frac{2f(\rho_R)}{(\rho_R-\rho)^2}>0.
  \end{equation}
  Therefore $\Phi$ is strictly increasing, and there exists a unique solution to \eqref{eq:rhom}. This completes the proof of the lemma.
  %[{\it to be polished}]
\end{proof}

To ensure that the Riemann solver in \eqref{RS-without-vacuum} is well defined, a necessary condition is that for any couple $(\rL,\rR)$ and $\alpha > f(\rR)/\rR$ the speed of waves in $\mathcal{R}^{-}(\rL, \rM) $ is lower than $\alpha$. This condition is easily verified, thanks to the convexity of $-f$. 
\begin{proposition} The following bounds hold for all $\alpha\in \eR$,
\begin{equation}\label{eq:bound-on-RSalpha}
0\le \mathcal{R}^{\alpha}(\rL, \rR)(x/t) \le \max\{\rho_L,\rho_R\}\,.
\end{equation}
\end{proposition}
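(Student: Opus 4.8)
The plan is to treat separately the three regimes that define $\mathcal{R}^{\alpha}$ and, in each of them, to reduce the bound to the maximum principle enjoyed by the classical Riemann solvers $\mathcal{R}^{\pm}$. I would recall the standard fact that for a scalar conservation law the self-similar solution of a Riemann problem takes values between the minimum and the maximum of its two data; hence $\mathcal{R}^{\pm}(\rho_a,\rho_b)(x/t)\in[\min\{\rho_a,\rho_b\},\,\max\{\rho_a,\rho_b\}]$ for every $x/t$. This is the only analytic ingredient I would invoke, and it is the heart of the argument.

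In the vacuum regime $-v(\rho_L)\le\alpha\le v(\rho_R)$ the solution \eqref{eq:RS-with-vacuum} assumes only the three constant values $\rho_L$, $0$ and $\rho_R$, each of which lies in $[0,\max\{\rho_L,\rho_R\}]$, so the estimate is immediate.

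For the regime $\alpha\ge v(\rho_R)$ the right portion of \eqref{RS-without-vacuum} is the constant $\rho_R\le\max\{\rho_L,\rho_R\}$, while the left portion is $\mathcal{R}^{-}(\rho_L,\rho_M)$. By Lemma~\ref{lem:2.1} the intermediate state satisfies $\rho_M\in(0,\rho_R)$, in particular $\rho_M<\rho_R\le\max\{\rho_L,\rho_R\}$. The maximum principle then gives $\mathcal{R}^{-}(\rho_L,\rho_M)(x/t)\in[0,\max\{\rho_L,\rho_M\}]\subseteq[0,\max\{\rho_L,\rho_R\}]$, which yields the claim in this regime. The case $\alpha<-v(\rho_L)$ is handled symmetrically: the left portion of \eqref{eq:RS-without-vacuum-neg-alpha} is the constant $\rho_L$, the intermediate state obeys $\rho_M\in(0,\rho_L)$ by the mirror version of Lemma~\ref{lem:2.1} (with the roles of $\rho_L,\rho_R$ and of $\mathcal{R}^{+},\mathcal{R}^{-}$ exchanged), and the right portion $\mathcal{R}^{+}(\rho_M,\rho_R)$ is controlled by $\max\{\rho_M,\rho_R\}\le\max\{\rho_L,\rho_R\}$.

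I expect no serious obstacle here; the only point requiring a word of care is the containment $\rho_M\le\max\{\rho_L,\rho_R\}$ in the two non-vacuum regimes, which is exactly what the strict inequality $\rho_M<\rho_R$ (resp.\ $\rho_M<\rho_L$) coming from Lemma~\ref{lem:2.1} guarantees, so that the intermediate state never pushes the range of the classical solver above the larger of the two original data. Once that is in place, the nonnegativity is automatic and the upper bound follows termwise in each case.
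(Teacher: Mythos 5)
Your proof is correct and is essentially the argument the paper intends: the proposition is stated without proof precisely because the bound is read off from the explicit case-by-case formulas \eqref{eq:RS-with-vacuum}, \eqref{RS-without-vacuum}, \eqref{eq:RS-without-vacuum-neg-alpha}, the maximum principle for the classical solvers $\mathcal{R}^{\pm}$, and the containment $\rho_M\in(0,\rho_R)$ (resp. $\rho_M\in(0,\rho_L)$) from Lemma~\ref{lem:2.1}. Your write-up simply makes that implicit reasoning explicit, so no discrepancy with the paper arises.
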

%\hl{It could be useful to have a statement with the existence of the Riemann solver.}

\begin{remark}
Since the flux switches sign, the solution to the Riemann problem %Riemann's solver need to be resolved with
satisfies the equation with convex flux $-f(\rho)$ on the domain $\Omega_L = \{(x,t): x < \alpha t\}$, and  with concave flux $+f(\rho)$
on $\Omega_R = \{(x,t):x > \alpha t\}$.

Suppose the pair $(\gamma_l\rho(t),\gamma_r\rho(t))$ are the left and right traces at $\xi(t)$ at each $t>0$, then definition \ref{def:Kruzhov-solutions} yields the Rankine-Hugoniot condition 
\begin{equation}
\label{eq:Rankine-hugoniot-main}
f(\gamma_r\rho(t)) + f(\gamma_l\rho(t)) = \xi'(t) (\gamma_r\rho(t) - \gamma_l\rho(t)),
\end{equation}
at $\xi(t)$. 

As a consequence of Definition \ref{def:Kruzhov-solutions}, if the solution is discontinuous at $x=\xi(t)$ the jump is undercompressive and the characteristic lines impinge on $\xi(t)$ from the side where the density is higher. More precisely,
%\begin{lemma}
if $\rho$ is an entropy solution and $(\gamma_l\rho(t),\gamma_r\rho(t))$ are its left and right traces at $\xi(t)$, at each $t>0$ we have 
\begin{equation}
    \min\{0, -\xi'(t)-f'(\gamma_l\rho(t))\}\max\{0, -\xi'(t)+f'(\gamma_r\rho(t))\}=0.
\end{equation}
%\end{lemma}
This necessary condition has been pointed out in \cite[Prop. 5.1]{karlsen_risebro_towers} in a more general framework and in \cite[Prop. 2.4]{El-Khatib2013Weak} for the Hughes model. The numerical examples used in Section \ref{sec:NumericalExamples} well illustrates this effect.
\end{remark}

\subsection{The germ of admissible solutions}
The time-dependent discontinuity of $f$ in $\rho$ along the Lipschitz continuous curve $\xi$ makes it more appropriate to consider entropy solutions given in Definition \eqref{def:Kruzhov-solutions} within the admissibility germ framework. In this framework, the inequality \eqref{eq:weak_solution} reduces to a pair of traces at $\xi$ if the condition \eqref{eq:Rankine-hugoniot-main} is satisfied. It is important to note that since $\pm f$ are assumed to be continuous over $[0,R]^2$ and the measure of the set $\{ s\in [0, 1] \text{ s.t. } f'(s)=0 \}$ is zero, the traces $\gamma^{l,r}\rho(t)$ are actually one-sided strong traces, as proven in \cite[Theorem 2.1]{Andreianov2011}. This notion of admissibility germs relies on the idea that Kruzhkov's entropy is satisfied away from $\xi$, while the "non-standard" Rankine-Hugoniot condition is adapted at $\xi$. Thanks to the germs, it should be possible to define the notion of an entropy process solution. We proceed to define the germ below.

 \begin{definition}
 \label{def:connection}
 Let $\xi(t) = \alpha t $, with $\alpha\in \eR$ fixed. The germ of admissible solutions $\mG_{\alpha}$ for the conservation law \eqref{eq:main} is
 \begin{equation}
  \mG_{\alpha} :=  \left\{ (0, \rho_{\alpha}) \in [0, 1]^2 : f(\rho_{\alpha}) = \alpha\rho_{\alpha}\right\} .
 \end{equation}
 \end{definition}
 \begin{definition}
 A germ $\mG$ is $L^1$-dissipative ($\LD$ for short), if for any $(p_l, p_r)$, $(q_l, q_r)\in \mG(\xi)$ the following dissipative property holds
\begin{equation}
    \label{ineq:ch3-dissipative}
    \mF(t,x,p_l, q_l) - \mF(t, x, p_r, q_r) \geq \alpha(\eta(p_l, q_l) -        \eta(p_r, q_r)),
\end{equation}
where $ \mF(t, x, \rho, c)$ is defined in Definition \eqref{def:Kruzhov-solutions} and $\eta(\rho, c) := |\rho - c|$. 
\end{definition}
\begin{proposition}
 The unique maximal $\LD$ extension of $\mG_{\alpha}$ is the subset of $[0, 1]^2$ defined by
\begin{equation}
    \widetilde\mG_{\alpha} := \left\{ (p_l, p_r) \in [0, 1]^2 : f(p_r) + f(p_l) = \alpha(p_r-p_l)\right\}.
\end{equation}
 \end{proposition}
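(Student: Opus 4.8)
The plan is to place the statement inside the $L^{1}$-dissipativity/duality framework of \cite{Andreianov2011} and to verify three things in turn: (i) the inclusion $\mG_{\alpha}\subseteq\widetilde\mG_{\alpha}$; (ii) that $\widetilde\mG_{\alpha}$ is itself $\LD$; and (iii) that $\widetilde\mG_{\alpha}$ cannot be enlarged, i.e.\ it is \emph{self-dual}, and that it is the only maximal $\LD$ germ lying over $\mG_{\alpha}$. Step (i) is immediate: any $(0,\rho_{\alpha})$ with $f(\rho_{\alpha})=\alpha\rho_{\alpha}$ satisfies $f(\rho_{\alpha})+f(0)=\alpha(\rho_{\alpha}-0)$ since $f(0)=0$, so it belongs to $\widetilde\mG_{\alpha}$.

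For step (ii) the decisive simplification is to introduce the moving-frame fluxes
\[
 g_{l}(\rho):=f(\rho)+\alpha\rho, \qquad g_{r}(\rho):=\alpha\rho-f(\rho),
\]
which by \eqref{eq:hyp-on-f} are respectively concave and convex on $[0,1]$, and to note that $(p_{l},p_{r})\in\widetilde\mG_{\alpha}$ is exactly the flux continuity $g_{l}(p_{l})=g_{r}(p_{r})$ across the moving interface. Recalling that $\sign(x-\xi)=\mp1$ immediately to the left/right of $\xi$, the inequality \eqref{ineq:ch3-dissipative} applied to two pairs $(p_{l},p_{r}),(q_{l},q_{r})\in\widetilde\mG_{\alpha}$, after adding and subtracting $\alpha|p_{l,r}-q_{l,r}|$, is equivalent to
\[
 \sign(p_{r}-q_{r})\big[g_{r}(p_{r})-g_{r}(q_{r})\big]\ \ge\ \sign(p_{l}-q_{l})\big[g_{l}(p_{l})-g_{l}(q_{l})\big].
\]
Since both pairs sit on the same moving-frame flux level, $g_{l}(p_{l})-g_{l}(q_{l})=g_{r}(p_{r})-g_{r}(q_{r})=:\Delta$, and the inequality collapses to $\big[\sign(p_{r}-q_{r})-\sign(p_{l}-q_{l})\big]\Delta\ge0$. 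I would close (ii) by a short sign analysis: the configuration $\sign(p_{r}-q_{r})\neq\sign(p_{l}-q_{l})$ with $\Delta\neq0$ cannot occur, because the high values of the concave $g_{l}$ (above $\max\{0,\alpha\}$) and the low values of the convex $g_{r}$ have no match, which confines $\widetilde\mG_{\alpha}$ to co-monotone branches; the residual case $\Delta=0$ is trivially admissible.

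Step (iii) is the core. To prove maximality I would show $\widetilde\mG_{\alpha}=\widetilde\mG_{\alpha}^{*}$: given $(\bar p_{l},\bar p_{r})\notin\widetilde\mG_{\alpha}$, so that $\delta:=g_{l}(\bar p_{l})-g_{r}(\bar p_{r})\neq0$, I would exhibit a pair $(q_{l},q_{r})\in\widetilde\mG_{\alpha}$ breaking the dissipation inequality. The natural candidate fixes one coordinate, e.g.\ $q_{r}=\bar p_{r}$ and $q_{l}$ a preimage of $g_{r}(\bar p_{r})$ under $g_{l}$; the reduced inequality then degenerates to $0\ge\sign(\bar p_{l}-q_{l})\,\delta$, which fails once $q_{l}$ is taken on the correct side of $\bar p_{l}$ (and symmetrically, exchanging the roles of $l,r$, when the first choice is obstructed). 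Uniqueness of the maximal extension follows from the abstract theory once $\widetilde\mG_{\alpha}$ is known to be self-dual and to contain the \emph{complete} family of interface connections produced by the Riemann solver \eqref{eq:RS-with-vacuum}–\eqref{eq:rhom-neg-alpha}: a complete maximal $\LD$ germ over $\mG_{\alpha}$ is unique.

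The main obstacle I anticipate is step (iii), specifically the construction of the separating pair: one must control the ranges of $g_{l}$ and $g_{r}$ so that the required preimage exists and lies on the prescribed side of $\bar p_{l}$ (or of $\bar p_{r}$), with the boundary traces $0,1$ and the extremal speeds $\alpha=\pm v(0)$ — where $\mG_{\alpha}$ degenerates to the single vacuum connection — needing separate, careful treatment. By comparison, the sign bookkeeping in step (ii) is routine once the reduction to $g_{l},g_{r}$ is in place.
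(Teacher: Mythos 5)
Your proposal is correct in outline and shares the paper's three-step skeleton (inclusion, $\LD$ verification, maximality via duality), but the execution of the middle step is genuinely different. The paper verifies \eqref{ineq:ch3-dissipative} by a direct case split on the sign of $\alpha$ and on whether $(q_l,q_r)$ is a vacuum/full pair, substituting the Rankine--Hugoniot identity into $\mF$ to show the inequality holds as an \emph{equality}; you instead pass to the moving-frame fluxes $g_l=f+\alpha\,\mathrm{id}$, $g_r=\alpha\,\mathrm{id}-f$, observe that membership in $\widetilde\mG_\alpha$ is the level-set condition $g_l(p_l)=g_r(p_r)$, and reduce the whole inequality to $\bigl[\sign(p_r-q_r)-\sign(p_l-q_l)\bigr]\Delta\ge 0$. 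Your reduction is correct (I checked the algebra) and is arguably more systematic: it makes transparent \emph{why} the inequality saturates, and it localizes the entire burden of the proof in a single structural fact, namely that two elements of $\widetilde\mG_\alpha$ on distinct levels are co-monotone. Be aware, though, that this co-monotonicity is exactly the assertion the paper also needs and also leaves as an observation (``we can observe that $\sign(p_l-q_l)=\sign(p_r-q_r)$''), so your ``short sign analysis'' is the one place where a real argument is still owed: you should spell out that over the common range $[0\wedge\alpha,\,0\vee\alpha]$ the relevant branch of $g_l^{-1}$ (resp. $g_r^{-1}$) in $[0,1]$ is single-valued and increasing, with the only extra elements $(0,0)$ and $(1,1)$ sitting at the endpoints of the level range where $\Delta=0$ whenever the signs disagree (this also covers the degenerate four-element germ at $\alpha=0$). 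Finally, your step (iii) proves more than the paper asks: with the paper's definition of the dual (pairs satisfying Rankine--Hugoniot \emph{and} the dissipation inequality against $\mG$), maximality of $\widetilde\mG_\alpha$ is nearly automatic since $\widetilde\mG_\alpha$ already exhausts all Rankine--Hugoniot pairs; your construction of a separating pair for $(\bar p_l,\bar p_r)\notin\widetilde\mG_\alpha$ is the stronger ``definiteness'' property of the Andreianov--Karlsen--Risebro framework, which is welcome but not required for the statement, and is indeed the step where the boundary traces and the extremal speeds $\alpha=\pm v(0)$ would need the care you anticipate.
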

 \begin{proof}
 The fact that $\widetilde\mG_{\alpha}$ contains $\mG_{\alpha}$ is obvious as 
 \[
 f(\rho_{\alpha})+ f(0)= \alpha\rho_{\alpha} -0.
 \]
 To show that $\widetilde\mG_{\alpha}$ is $\LD$ we distinguish three cases : $\alpha=0$, $\alpha>0$, and $\alpha<0$. 
\begin{itemize}
\item[If $\alpha=0$] the germ only contains four elements $(0,0)$,  $(0,1)$, $(1,0)$, $(1,1)$ and we can check that \eqref{ineq:ch3-dissipative} holds as an equality by direct computations.
\item[If $\alpha>0$] we consider two sub-cases
\begin{itemize}
    \item[I.] If $q_l=q_r=0$ we have 
    \begin{align*}
        \mF(t, x, p_l, 0) - \mF(t, x, p_r, 0) &= (-f(p_l) + f(0)) - (f(p_r) - f(0)) \\
         &=-(f(p_r) + f(p_l)) = \alpha (p_l-q_r)\\
         & = \alpha(\eta(p_l, 0) - \eta(p_r, 0))\,,
    \end{align*}
    for all $p_l,p_r\in [0,1]^2$. The case in which $q_l=q_r=1$ is similar. 
    \item[II.] If $q_l<q_r$  (the reverse inequality being impossible due to the sign of $\alpha$ and the geometry of the problem) we can observe that 
    \[
    \sign(p_l-q_l) = \sign(p_r-q_r). 
    \]
    Therefore
    \begin{align*}
        \mF(t, x, p_l, q_l) - \mF(t, x, p_r, q_r) &= \sign(p_l-q_l)\left(-f(p_l) + f(q_l) - f(p_r) + f(q_r)\right) \\
         &=\sign(p_l-q_l)\alpha \left( q_r-q_l -p_r+p_l\right)\\
         & = \alpha(\eta(p_l, q_l) - \eta(p_r, q_r))\,.
    \end{align*}
    \item[If $\alpha<0$] the analysis is exactly the same as in the previous case.      
\end{itemize}
\end{itemize}
It is also clear that $\widetilde \mG_{\alpha}$ is maximal: given any $\LD$ germ $\mG$ one can look for its possible $\LD$ extensions, the largest of which is its dual $\mG^*$, i.e. the set containing all couples $(q_l, q_r) \in [0, R]^2$ which satisfy the Rankine-Hugoniot conditions and \eqref{ineq:ch3-dissipative} for any $(p_l, p_r)\in\mG$. Of course, $\widetilde \mG_{\alpha}$ and $\widetilde \mG_{\alpha}^*$ coincide.
 \end{proof}

Additionally, given any piecewise constant initial condition $\rho_l\chi_{x<\xi(t)} + \rho_r\chi_{x>\xi(t)}$, with $\rho_l$ and $\rho_r\in [0,1]$, then the left and right side traces of the solution of the associate Riemann problem correspond to an element of $\widetilde\mG_{\alpha}$. More formally

\begin{lemma}
    \label{prop:property-of-germs}
    For any $\xi=\alpha t$, the $ \LD$ germ $\widetilde\mG_{\alpha}$ is complete.
\end{lemma}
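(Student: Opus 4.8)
The plan is to read off completeness directly from the explicit Riemann solver constructed in Section~\ref{sec:riemann}. Recall that, in the admissibility–germ framework, $\widetilde\mG_\alpha$ is \emph{complete} when every Riemann problem \eqref{eq:CL_alpha}--\eqref{eq:riemann_data} admits a self-similar $\widetilde\mG_\alpha$-entropy solution; concretely, for each datum $(\rL,\rR)\in[0,1]^2$ the one-sided traces $(\gamma_l\rho,\gamma_r\rho)$ of $\mathcal{R}^\alpha(\rL,\rR)$ at the interface $x=\alpha t$ must form a pair in $\widetilde\mG_\alpha$, while the lateral fans are classical (hence Kru\v{z}kov-admissible) solutions of $\pt\rho\mp\px f(\rho)=0$ propagating on the correct side of the line $x=\alpha t$. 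Since $\mathcal{R}^\alpha(\rL,\rR)$ has already been exhibited for every $(\rL,\rR)$ and shown there to solve \eqref{eq:CL_alpha}--\eqref{eq:riemann_data} in the sense of Definition~\ref{def:Kruzhov-solutions}, the whole argument collapses to identifying the interface traces and checking their membership in $\widetilde\mG_\alpha$.

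I would then split along the same three regimes used to build the solver, which exhaust all $\alpha\in\eR$ since $\{\alpha\ge v(\rR)\}$, $\{\alpha<-v(\rL)\}$ and $\{-v(\rL)\le\alpha\le v(\rR)\}$ cover the real line. When $-v(\rL)\le\alpha\le v(\rR)$, the line $x=\alpha t$ lies inside the vacuum wedge of \eqref{eq:RS-with-vacuum}, so both traces vanish; since $f(0)+f(0)=0=\alpha(0-0)$, the pair $(0,0)$ belongs to $\widetilde\mG_\alpha$. When $\alpha\ge v(\rR)$, formula \eqref{RS-without-vacuum} gives left trace $\rM$ and right trace $\rR$, and the defining relation \eqref{eq:rhom}, $\alpha=\bigl(f(\rR)+f(\rM)\bigr)/(\rR-\rM)$, is exactly the identity $f(\rR)+f(\rM)=\alpha(\rR-\rM)$ characterizing $\widetilde\mG_\alpha$; existence and uniqueness of such $\rM\in(0,\rR)$ is furnished by Lemma~\ref{lem:2.1}. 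The case $\alpha<-v(\rL)$ is the mirror image: \eqref{eq:RS-without-vacuum-neg-alpha} produces traces $(\rL,\rM)$ and \eqref{eq:rhom-neg-alpha} is again precisely the membership condition. Thus in every regime the interface traces satisfy the Rankine--Hugoniot identity \eqref{eq:Rankine-hugoniot-main}, i.e. lie in $\widetilde\mG_\alpha$.

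The only point that demands genuine work, and hence the main obstacle, is the consistency of the self-similar picture: one must ensure that the lateral fan actually abuts the interface carrying the claimed intermediate state, i.e. that all waves of $\mathcal{R}^{-}(\rL,\rM)$ travel with speed $\le\alpha$ (and, in the mirror case, all waves of $\mathcal{R}^{+}(\rM,\rR)$ with speed $\ge\alpha$), so that $\rM$ is truly the trace rather than some intervening value and the three local pictures glue without overlap. This speed ordering is exactly the well-posedness requirement recorded after Lemma~\ref{lem:2.1}, and it follows from the convexity of $-f$ together with the choice of $\rM$: the characteristic field of the left flux $-f$ is monotone, and the admissibility of the undercompressive jump $(\rM,\rR)$ moving at speed $\alpha\ge v(\rR)$ forces every wave of the convex fan $\mathcal{R}^{-}(\rL,\rM)$ to remain strictly to the left of $x=\alpha t$. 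Once this ordering is in hand, each $\mathcal{R}^\alpha(\rL,\rR)$ is a bona fide self-similar $\widetilde\mG_\alpha$-entropy solution for arbitrary $(\rL,\rR)\in[0,1]^2$, which is precisely the completeness of $\widetilde\mG_\alpha$.
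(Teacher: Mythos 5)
Your proposal is correct and follows essentially the same route as the paper, whose entire proof is the one-line remark that the claim ``follows immediately from the study of the Riemann problem at the interface'' in Section~\ref{sec:riemann}. You simply make explicit what that reference leaves implicit: the three-regime case split, the identification of the interface traces ($(0,0)$, $(\rM,\rR)$ via \eqref{eq:rhom}, or $(\rL,\rM)$ via \eqref{eq:rhom-neg-alpha}), their membership in $\widetilde\mG_\alpha$, and the wave-speed ordering already recorded after Lemma~\ref{lem:2.1}.
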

\begin{proof}
This lemma follows immediately from the study of the Riemann problem at the interface in \ref{sec:riemann}.
\end{proof}
We now define $\mG$-entropy and $\mG$-entropy process solutions for our problem below.
\begin{definition}
\label{def:G-solutions} Assume that $f$, $\xi$ satisfy \eqref{eq:hyp-on-f}, \eqref{eq:hyp-on-xi} respectively. We say that map $(t,x)\mapsto\rho(t,x)$ is a $\mG$-entropy solution to the initial-boundary value problem \eqref{eq:main}, if $\rho$ in $\Lp{\infty}\left([0,+\infty[\times\Omega; ~[0,1]\right)$, is a weak solution of this problem and for any $(p_l,p_r)\in \mG$, and any test function $\varphi \in \mathbf{C}^{\infty}_{\mathbf{c}}\left(\eR^2; ~[0,+\infty[\right)$ we have 
	\begin{align}
	\label{eq:G_solution}
	&\int_{0}^{+\infty}\!\!\int_{-1}^{1}\left[|\rho - c(x)|\partial_t\varphi + \mathcal{F}(t,x,\rho,c(x))\partial_x\varphi\right]\text{d}x\text{d}t + \int_{-1}^{1}|\rho_0(x)-c(x)|\varphi(0,x)\text{d}x\nonumber\\
	&+ \int_{0}^{+\infty}\left[f(\rho(t,1-))-f(c(x))\right]\varphi(t,1)\text{d}t ~+ \\  &+\int_{0}^{+\infty}\left[f(\rho(t,-1+))-f(c(x))\right]\varphi(t,-1)\text{d}t \geq 0, \nonumber 
	\end{align}
	where $c(x) = p_l\chi_{x<\xi(t)} (x)+ p_r\chi_{x>\xi(t)}(x)$ and $\mathcal{F}(t,x,\rho,c)$ is defined at \eqref{def:cal-F}.
	%= \sign(\rho - c)\left[F(t,x,\rho)-F(t,x,c)\right].
\end{definition}
\begin{remark} Following \cite[Remark 3.12 and Th. 3.18]{Andreianov2011} we recall that
\begin{itemize}
    \item $\rho \in\Lp{\infty}\left([0,+\infty[\times\Omega; ~[0,1]\right)$ is a $\mG_\alpha$-entropy solution if and only if it is an entropy solution in the sense of Kruzhkov on $[0,+\infty[\times [-1,\xi^-)$ and $[0,+\infty[\times (\xi^+, 1]$,  and for almost every $t>0$ the couple given by its traces at $x=\xi(t)$, $(\gamma^{l}\rho(t),\gamma^{r}\rho(t))$, is in $\widetilde \mG_\alpha = \mG_\alpha^*$. From the definition of $\widetilde \mG_\alpha$ it is clear that this last point is equivalent to say that $\rho$ is a weak solution on $[0,+\infty[\times\Omega$. Therefore, Definition \ref{def:G-solutions} is equivalent to Definition 1 in \cite{Amadori2012}.
    \item In our setting $\mG_\alpha$- and $\widetilde \mG_\alpha$-entropy solutions coincide. 
\end{itemize}
\end{remark}
\begin{definition}
\label{def:process-solutions} 
Assume that $f$, $\xi$ satisfy \eqref{eq:hyp-on-f}, \eqref{eq:hyp-on-xi} respectively. We say that a function $\mu \in \Lp{\infty}\left([0,+\infty[\times\Omega\times (0,1); ~[0,1]\right)$ is a $\widetilde \mG_\alpha$-entropy process solution to the initial-boundary value problem \eqref{eq:main}, if the following conditions hold : %\Carlottanote{Here I do not put anything for the boundary, this has to be added, but I think we need to be careful because we only have $\Lp{\infty}$ regularity.}
\begin{enumerate}
    \item For any test function $\varphi \in \mathbf{C}^{\infty}_{\mathbf{c}}\left(\eR^+\times(-1,1); ~[0,+\infty[\right)$ we have 
    \begin{align}
	\label{eq:weakproc-solution}
	  \int_0^1\int_{0}^{+\infty}\!\!\int_{-1}^{1}\left[\mu(t,x,a) \partial_t\varphi + F(t,x,\mu(t,x,a))\partial_x\varphi\right]\text{d}x\text{d}t \text{d}a+ \int_{-1}^{1}\rho_0(x)\varphi(0,x)\text{d}x= 0, \nonumber 
	\end{align}
	\item  The inequality 
	\begin{align}
	%\label{eq:diss-proc-solution}
	&\int_0^1\int_{0}^{+\infty}\!\!\int_{-1}^{1}\left[|\mu(t,x,a) - c(x)|\partial_t\varphi + \mathcal{F}(t,x,\mu(t,x,a),c(x))\partial_x\varphi\right]\text{d}x\text{d}t\text{d}a \\ &+ \int_{-1}^{1}|\rho_0(x)-c(x)|\varphi(0,x)\text{d}x\geq 0, \nonumber 
 	\end{align}
	where  $\mathcal{F}(t,x,\rho,c) = \sign(\rho - c)\left[F(t,x,\rho)-F(t,x,c)\right]$, is satisfied 
	\begin{itemize}
	    \item for any test function $\varphi \in \mathbf{C}^{\infty}_{\mathbf{c}}\left(\eR^+\times(-1,1); ~[0,+\infty[\right)$ and for any $c(x) = p_l\chi_{x<\xi(t)} (x)+ p_r\chi_{x>\xi(t)}(x)$ such that $(p_l,p_r)\in \widetilde \mG_\alpha$ ;
	    \item for any test function $\varphi \in \mathbf{C}^{\infty}_{\mathbf{c}}\left(\eR^+\times(-1,1); ~[0,+\infty[\right)$ such that $\varphi = 0$ on $x=\xi(t)$ and any constant $c(x)=c\in \mathbb{R}$.
	\end{itemize}
\end{enumerate}
\end{definition}
\begin{remark}
We could define $\mG_\alpha$-entropy process solutions, but since they do not coincide in general with $\widetilde \mG_\alpha$-entropy process solutions, we do not use them here. The fact that $\widetilde \mG_\alpha$ is maximal $\LD$ is a necessary hypothesis in the following theorem, see \cite[Th: 3.28]{Andreianov2011}.
\end{remark}
\begin{theorem}
Let $\mG$ be a maximal $\LD$ germ and let $\rho_0$ be an initial condition for which the initial-boundary value problem \eqref{eq:main} admits a $\mG$-entropy solution, $\rho$. Then there exists a unique $\mG$-entropy process solution $\mu$ associated to the initial condition $\rho_0$ and $\mu(\alpha) = \rho$ for almost every $\alpha \in (0,1)$.
\end{theorem}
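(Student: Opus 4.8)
The plan is to split the statement into two parts: an existence part, which is essentially a tautology, and a combined uniqueness-and-identification part, which rests on a Kruzhkov doubling-of-variables argument adapted to the moving interface. For existence, I would exhibit the obvious candidate $\mu(t,x,a) := \rho(t,x)$, constant in the process variable $a\in(0,1)$. Since $\mu$ does not depend on $a$, the integrals over $a$ in Definition~\ref{def:process-solutions} factor out, and both the weak formulation in condition~(1) and the two families of entropy inequalities in condition~(2) collapse exactly onto the statements that $\rho$ is a weak solution and a $\mG$-entropy solution in the sense of Definition~\ref{def:G-solutions}. These hold by hypothesis, so this $\mu$ is a $\mG$-entropy process solution with $\mu(\cdot,\cdot,a)=\rho$ for every $a$. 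The real content of the theorem is therefore that \emph{every} process solution enjoys this property, which simultaneously delivers the identification $\mu(\alpha)=\rho$ and uniqueness.

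For the uniqueness-and-identification part, let $\mu$ be an arbitrary $\mG$-entropy process solution, and compare it directly with the given entropy solution $\rho$. I would double the variables: write the entropy inequality of Definition~\ref{def:process-solutions} for $\mu(t,x,a)$ tested against the piecewise-constant states furnished by $\rho(s,y)$, and the entropy inequality of Definition~\ref{def:G-solutions} for $\rho(s,y)$ tested against $\mu(t,x,a)$. Summing the two, integrating in $a$, and testing with a product of a genuine nonnegative test function and a symmetric mollifier $\delta_\varepsilon(t-s)\delta_\varepsilon(x-y)$, the symmetric structure of $\mF$ (see \eqref{def:cal-F}) cancels the bulk entropy-dissipation terms in the limit $\varepsilon\to0$. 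Because both data equal $\rho_0$, the initial term vanishes, and the homogeneous Dirichlet condition at $x=\pm1$ built into both definitions renders the boundary contributions nonpositive. This would leave a differential inequality of the form
\begin{equation*}
\frac{d}{dt}\int_{-1}^{1}\!\!\int_0^1 |\mu(t,x,a)-\rho(t,x)|\,\mathrm{d}a\,\mathrm{d}x \;\le\; \mathcal{I}_\xi(t),
\end{equation*}
where $\mathcal{I}_\xi(t)$ collects the residual terms concentrated on the interface $x=\xi(t)$.

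The single genuine difficulty is to show $\mathcal{I}_\xi(t)\le 0$, and I expect this to be the main obstacle. Away from $x=\xi(t)$ the flux $\pm f$ is locally continuous, so the classical Kruzhkov calculation applies verbatim; across $x=\xi(t)$ the sign change of $f$ destroys the usual cancellation and leaves a term built from the one-sided traces of $\mu$ and of $\rho$. Here I would invoke two facts. First, since $\{f'=0\}$ has zero measure, strong one-sided traces exist (cf.\ \cite[Theorem 2.1]{Andreianov2011}), so that $\mathcal{I}_\xi$ is a genuine pointwise trace expression rather than a mere distribution. Second, by the remark following Definition~\ref{def:G-solutions} the trace pair of $\rho$ lies in $\widetilde\mG_\alpha=\mG$, and the entropy inequalities for the process solution force its trace pairs into $\mG$ for a.e.\ $a$; applying the $\LD$ inequality \eqref{ineq:ch3-dissipative} to these two admissible pairs shows exactly that the interface contribution dissipates. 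It is precisely at this step that maximality of $\mG$ is essential: maximality gives $\mG=\mG^*$, so that admissibility of both trace pairs suffices to pair them in \eqref{ineq:ch3-dissipative}; this is the hypothesis flagged in the remark preceding the theorem via \cite[Th.\ 3.28]{Andreianov2011}.

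With $\mathcal{I}_\xi\le0$ the differential inequality integrates in time, using $\mu(0,\cdot,a)=\rho_0=\rho(0,\cdot)$, to
\begin{equation*}
\int_{-1}^{1}\!\!\int_0^1 |\mu(t,x,a)-\rho(t,x)|\,\mathrm{d}a\,\mathrm{d}x = 0 \qquad \text{for a.e. } t>0.
\end{equation*}
Hence $\mu(t,x,a)=\rho(t,x)$ for a.e.\ $(t,x,a)$, that is $\mu(\alpha)=\rho$ for a.e.\ $\alpha\in(0,1)$; in particular any two $\mG$-entropy process solutions agree with $\rho$ and therefore with one another, giving uniqueness. Every step other than the interface estimate $\mathcal{I}_\xi\le0$ is standard once the strong traces are available, so the crux of the argument is the rigorous passage from the doubled entropy inequalities to the trace expression and its control by the germ dissipativity.
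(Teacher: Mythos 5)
The paper offers no proof of this theorem at all: it is stated as an imported result, with the preceding remark pointing to \cite[Th.\ 3.28]{Andreianov2011} and the following sentence to \cite{karlsen_risebro_towers} for its justification. Your proposal is therefore not competing with an argument in the paper but reconstructing the one behind the citation, and in outline it is the right reconstruction: the trivial embedding $\mu(t,x,a):=\rho(t,x)$ for existence, Kruzhkov doubling away from the interface, and germ dissipativity \eqref{ineq:ch3-dissipative} together with maximality ($\mG=\mG^*$) to control the interface remainder. One caveat on the bulk part: the interface is the moving curve $x=\xi(t)$ rather than a vertical line, so the doubling argument does not apply ``verbatim'' up to it; one must either straighten the interface by a change of variables (which introduces the flux correction $-\xi'(t)\rho$ and is exactly why the germ and \eqref{ineq:ch3-dissipative} carry the factor $\alpha$) or use test functions adapted to $\xi$, and for a general Lipschitz $\xi$ the relevant germ $\widetilde\mG_{\alpha}$ varies in time through $\alpha=\xi'(t)$. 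Also note that the test functions in Definition~\ref{def:process-solutions} are compactly supported in $(-1,1)$, so the boundary terms you invoke at $x=\pm1$ do not actually enter that formulation.

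The one step I would push back on is the claim that ``the entropy inequalities for the process solution force its trace pairs into $\mG$ for a.e.\ $a$.'' A process solution need not admit one-sided traces for each fixed value of the process variable, and Definition~\ref{def:process-solutions} only encodes dissipation against the piecewise-constant states $c(x)$ built from elements of $\widetilde\mG_\alpha$; it does not directly produce trace pairs of $\mu$. The standard mechanism is the reverse pairing: the genuine solution $\rho$ has strong one-sided traces whose pair lies in $\mG=\mG^*$, so one inserts \emph{those} traces as the states $(p_l,p_r)$ in the adapted entropy inequality for $\mu$, and the defining property of $\mG^*$-compatibility then yields $\mathcal{I}_\xi\le 0$ without ever needing traces of $\mu$. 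This is precisely where maximality is consumed, as you correctly flag. With that adjustment your plan matches the proof of \cite[Th.\ 3.28]{Andreianov2011} on which the paper silently relies; as written, the trace claim for $\mu$ is a genuine gap in the chain of reasoning even though the conclusion is reachable by the corrected route.
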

Thanks to the existence result proved in \cite{karlsen_risebro_towers} we deduce that our problem admits a unique $\widetilde\mG_\alpha$-entropy process solution. We will construct this in the next section as the limit of finite volume approximations. 

\section{The Finite Volume Scheme}
\label{sec:FiniteVolumeScheme}
In this section, we present a finite volume scheme with a moving mesh adapted near $ \xi$ to approximate the weak solution of equation \eqref{eq:main}. 
Our strategy to define the moving mesh consists in discretizing the space-time domain into a reference mesh, but at each time only cells closer to the flux interface $\xi$ are adapted such that it is included in the computational mesh. The main feature of the computational (or moving) mesh is that the two cells adjacent to $\xi$ have variable
lengths, whereas the cells on the reference mesh have a fixed cell length, $\DX$. Modified marching formulas, which incorporate a modified numerical flux function based on the non-classical Riemann solver $(\mathcal{R}^{\alpha})$, are applied to the two cells adjacent to the turning curve. The standard Godunov marching formulas are used for cells away from $\xi$, where the reference mesh is established.

This is a slight variation from the mesh adaptation technique presented in \cite{ZHONG1996192} and used in \cite{dellemonache2014} for a traffic flow model with a moving constraint. This technique involves replacing the cell interface closest to $\xi$ and shifting the adjacent interfaces, followed by a Lagrange interpolation formula to calculate a new temporal density average for two adjacent cells. Although their scheme is conservative, the recomputation of the new density averages makes it difficult to analyze the well-balanced property of the resulting scheme. The scheme proposed here does not suffer from this severity. Another variation of the discretization procedure also adapted for the traffic flow model with moving constraints can be found in \cite{sylla2021lwr}. In the next subsections, we detail our space discretization procedure and present the numerical scheme. 

\subsection{Discretization}
Fix $N\in\eN$ and define $\DX =2^{-N}$ as the space step. Let $x_{j+1/2} : = (j+ 1/2) \DX$ be a point of the reference mesh and $C_j:= \left[x_{j-1/2}, x_{j+1/2}\right)$ be a computational cell.

Since we are approximating the space interval $\Omega = ]-1,1[$, the indexes $j\in \eN$ of the cells are considered 
in the range $j=-\frac1\DX,\ldots,\frac 1 \DX$\,.
%$-2^N\le  j \le 2^N$\,.     
%J_{0} =-2^N ,\qquad J_{1} = 2^N\,.%\hl{Remark: $J_0=-2^N$, $J_1= 2^N-1$.}\\
%Here above, $[x]$ denotes the integer part of the real number $x$, that is, the only integer that satisfies  $[x] \le  x < [x]+1$. 
The leftmost and rightmost cells are 
$$
C_{-\frac 1\DX}:= \left[x_{-\frac 1\DX-\frac12}, x_{-\frac 1 \DX+\frac12}\right)\,,\qquad  C_{\frac 1\DX}:= \left[x_{\frac 1\DX-\frac12}, x_{\frac 1\DX+\frac12}\right)\,.
%C_{-2^N}:= \left[x_{-2^N-\frac12}, x_{-2^N+\frac12}\right)\,,\qquad  C_{2^N}:= \left[x_{2^N-\frac12}, x_{2^N+\frac12}\right)\,.
$$

The computational cells here above 
%$C_{-2^N}$, $C_{2^N}$ 
contain the boundary points $x = -1$ and $x=1$ respectively, and
\begin{equation}\label{eq:union-Cj-equals-I}
    \bigcup_{j=- 1/\DX}^{1/\DX} C_j = \left[-\frac \DX 2 -1,1+\frac \DX 2 \right)\,.
    %[-1-2^{-N-1},1+ 2^{-N-1})\,.
\end{equation} 
\noindent %For a fixed $T>0,$ choose 
Let $\DT > 0$ denote the time step and set %discretize the time interval $[0, T]$ by 
$t^{n} = n\DT$ for $n\in\eN$. % with $t^n < T+\DT$. 
%Supposing that the turning point curve is given by $t\mapsto \xi(t)\in \Lip\big([0, T], (-1, 1)\big)$, 
We define the slope discretization of $\xi(t)$ as follows, % by $\alpha_{n,\DT}\in\eR$ its slope discretization, defined by 
\begin{equation}\label{eq:def-alpha_n}
\alpha_{n,\DT} =\frac{1}{\DT}\int_{t^n}^{t^{n+1}}\dot{\xi}(t)\, dt, \qquad %\forall\, 
n \in\eN  
\end{equation}
%\hl{[more precisely it is $\alpha_{n,\DT}$]}

and by $\xi^\DT(t)$ the polygonal function that interpolates linearly the points $(t^n,\xi(t^n))$, that is
\begin{equation}\label{def:xi-DT}
  \xi^\DT(t^n)  = \xi(t^n)=:\xi^n \,,\qquad \frac d{dt}\xi^\DT (t) =  \alpha_{n,\DT} \qquad % \forall\,
  t\in (t^n,t^{n+1})\,.
\end{equation}
\begin{comment}
and moreover denote by $\xi^n$ the location of the turning point at time $t=t^n$:
\begin{equation}
    \label{eq:xi-n}
    \xi^n : = \xi(t^n)
    %\xi^{n+1} = \xi^n + \alpha_{n,\DT}\DT 
    \qquad
    \forall\, n\,.
\end{equation}
%that coincides with the curve $\xi(t)$ at each point $t^n$, that is 
%$\xi^n = \xi(t^n)\in (-1,1)$. 
\end{comment}

%By assumption Since $\xi$ is continuous %on $[0,T]$ and takes its values in $(-1,1)$, 
%then its range  %%% on finite intervals.
%the range of $\xi$ is bounded away from the values $\pm1$. Hence, %{\bord\st{fix $T>0$ and}} we 

The next definition provides the basic notations and settings for the \textbf{moving mesh}.

\begin{definition}\label{def:the-mesh} 
Let the space step $\DX=2^{-N}>0$ and time step $\DT>0$ be fixed. Recalling \eqref{eq:hyp-on-xi}, choose $N\ge N_0$ with $N_0\in\eN$ large enough so that 
$\|\xi\|_\infty < 1 - \DX/2$ for every $N\ge N_0$\,.
For every $n\in\eN$,  let $m(n)\in 
\{-\frac 1\DX +1,\ldots,\frac 1\DX-1\}%\{-2^N +1,\ldots,2^N-1\}
$ be the single value such that 
\begin{equation}\label{xi-n-where-it-is}
    \xi^n\in\left[x_{m(n)-1/2}, ~x_{m(n)+1/2}\right)\,.
\end{equation} 
At a time $t^n$ the \ul{moving mesh} %\textbf{moving mesh} 
is defined by the points %\hl{[changed the indexes for $j\le...$]}
\begin{equation}
\label{eq:moving-mesh-n}
\xx^n_{j-1/2} := \begin{cases}
x_{j-1/2}, &\ j\leq m(n)-1 \mbox{ or } j\geq m(n)+2\,,\\
\xi^n, &\   %\text{if } 
j = m(n),\ m(n)+1\,.
%\\
%x_{j-1/2}, &\   %\text{if } 
%j\geq m(n)+2\,.
\end{cases}%\qquad \qquad  -2^N\le j\le 2^N\,.
\end{equation}
The computational cells in the moving mesh at each time $t^n$ are defined by 
\begin{equation}\label{def:new-cells}
    j\not = m(n)\,,\ |j|\le \frac 1\DX\,,\qquad \CCC^n_j := 
    %:= \left[\xx_{j-1/2}^n, \xx_{j+1/2}^n\right) 
    [\xx^n_{j-1/2},\xx^n_{j+1/2}) =
    \begin{cases} 
        C_{j}, & %\text{ if  } 
        |j-m(n)|\ge 2\\
        %-2^N \le j\leq m(n) -2,  \\
        D^n_L, & %\text{ if  } 
        j = m(n) -1 ,\\
        D^n_R, & %\text{ if  } 
        j = m(n) +1,\\
%        C_j, & \text{ if } m(n) + 2 \leq j \leq 2^N,\\
    \end{cases}
\end{equation}
where %$j=J_0=-2^N,\ldots,J_1=2^N$ with $j\not = m(n)$ and 
\begin{equation}\label{def:DnLR}
    D^n_L := \CCC^n_{m(n)-1} =  \left[x_{m(n)-3/2}, \, \xi^n\right)\,, \qquad 
D_R^n := \CCC^n_{m(n)+1} = \left[\xi^n,\, x_{m(n)+3/2}\right)\,.
\end{equation}
%are the two cells of variable lengths, respectively,
%with extreme values %that are the same as
%$$\xx^n_{J_0+1/2} = x_{J_0-1/2} = -1 - 2^{-N-1}\,,\qquad %\xx^n_{J_1+1/2} = 1 + 2^{-N-1}\,.$$
The length of the computational cells $\CCC^n_j$ in the moving mesh is given by 
\begin{equation}\label{def:cell-size}
    j\not = m(n)\,,\qquad  \DX^n_j := |\CCC^n_j| =%[\xx^n_{j-1/2},\xx^n_{j+1/2}) =
    \begin{cases} 
        \DX , & %\text{ if  } 
        |j-m(n)|\ge 2\\
        %-2^N \le j\leq m(n) -2,  \\
        \Delta^n_L, & %\text{ if  } 
        j = m(n) -1 ,\\
        \Delta^n_ R, & %\text{ if  } 
        j = m(n) +1,\\
%        C_j, & \text{ if } m(n) + 2 \leq j \leq 2^N,\\
    \end{cases}
\end{equation}
%$\DX^n_j $, and in particular
where, after \eqref{xi-n-where-it-is} and \eqref{def:DnLR}, the quantities
%\begin{equation*}%\label{def-DeltaLR}
%\DX^n_{m(n)-1}
$\Delta^n_L :=\left| D^n_L\right|$ \,, %qquad     %\DX^n_{m(n)+1} 
$\Delta^n_ R := \left| D^n_R\right|$
%\end{equation*}
satisfy
\begin{align}
\Delta^n_L + \Delta^n_R  = 3\DX\,,\qquad  \DX \le  \Delta^n_L,~~ \Delta^n_R    \le 2\DX\,.
%   \Delta^n_L + \Delta^n_R  &= 3\DX\,, \nonumber\\[2mm] %%\left| D^n_L \right| + \left| D^n_R\right| 
 %    \DX \le  \Delta^n_L,~~ \Delta^n_R   & \le 2\DX\,. \label{eq:on-cell-length}  % \left| D^n_L\right|, \left| D^n_R\right| 
     \end{align}
\end{definition}
%In this definition,
We stress that $\xi^n$ is a point of the grid at time $t^n$ and that it corresponds to the lumped indexes $m(n)\pm1$. Therefore, the two nearby cells around the value $\xi^n$ correspond to three cells in the reference mesh.
%that the notation of the points around $\xi^n$ is as follows: %\begin{equation}\label{def:points-around-xi}
%    \left(\xx^n_{m(n)-3/2},\xx^n_{m(n)-1/2},\xx^n_{m(n)+1/2},
%    \xx^n_{m(n)+3/2}\right) 
%        = \left(x_{m(n)-3/2}, \xi^n,\xi^n, x_{m(n)+3/2}\right)\,.
%\end{equation}
Comparing with \eqref{eq:union-Cj-equals-I}, we notice that
\begin{comment} %%%%  COMMENTED TEXT - begin
$$
\CCC^n_j = \begin{cases}
 [\xx^n_{j+1/2},\xx^n_{j+3/2}), & j\le m(n)-1,\\[1mm]
 [\xx^n_{j-1/2},\xx^n_{j+1/2}),
 & j\ge m(n)+1,
 \end{cases}
$$ and that
%%%%  COMMENTED TEXT - end
\end{comment}
\begin{equation}\label{eq:union-Cnj-equals-I}
     \bigcup_{j=- 1/\DX,\ j\not = m(n)}^{1/\DX} \CCC^n_j = \left[-\frac \DX 2 -1,1+\frac \DX 2 \right)\,.
    %    \bigcup_{j\in\{-2^N,\dots, 2^N\}\setminus\{m(n)\}} \CCC^n_j = 
 %[-1-2^{-N-1},1+ 2^{-N-1})  
\end{equation}

\subsection{Time evolution of the grid} Here we consider %analyze in detail 
the time variation of the grid from $t^n$ to $t^{n+1}$.
By assuming the CFL condition
\begin{equation}\label{eq:CFL-cond}
\DT \max\left\{\norm{f'}_{\infty}, \norm{\xi'}_{\infty} \right\} \leq \frac{\DX}{2}\,,
\end{equation}
and recalling \eqref{def:xi-DT}, we find that
$$\left| \xi^{n+1} - \xi^n\right|\leq \DT \norm{\xi'}_{\infty}  \le   \frac\DX 2$$
and therefore the map $\xi^\DT(t)$ crosses at most one boundary cell of the reference mesh. That is, %then in projecting the grid at time $t^{n+1},$ 
at $t=t^{n+1}$ one has %only the following three cases can arise: %for the definition of the moving mesh $\xx^{n+1}_{j+1/2}$ namely, 
$$
m(n+1) \in \{m(n), m(n)\pm 1\}\,.
$$

Here we analyze the variation of the mesh, given in Definition~\ref{def:the-mesh}, from time $t^n$ to $t^{n+1}$ 
%illustrate the definition of the new mesh 
in the various cases.

\medskip 
\boxed{\textbf{Case A:}\ 
m(n)=m(n+1)} \quad 
%{\textbf{Case A:}} $m(n+1) = m(n)$. 
In this case 
%leads to the same definition of the mesh as in \eqref{eq:moving-mesh-n}. Then 
$
\xx^{n+1}_{j-1/2} = \xx^n_{j-1/2}$     for $j\not = m(n),\ m(n)+1$ 
and the point $\xi^n$ moves to $\xi^{n+1}$. In particular:
\begin{align*}
    t^{n+1}:\qquad &x_{m(n+1)-3/2}\,, \qquad 
    {\bord \xi^{n+1}}\,, \qquad x_{m(n+1)+3/2}\,;\\
    t^n:\qquad &x_{m(n)-3/2}\,, \qquad\quad {\bord \xi^n}\,, \qquad \ \ \, x_{m(n)+5/2}  \,.
\end{align*}
The values of $\Delta^n_L$,  $\Delta^{n+1}_L$ and of $\Delta^n_R$,  $\Delta^{n+1}_R$  satisfy the relations
\begin{equation}\label{eq:lenght-cellsLR-A}
    \Delta^{n+1}_L = \Delta^n_L + \alpha_{n,\DT}\DT\,,\qquad \Delta^{n+1}_R = \Delta^n_R - \alpha_{n,\DT}\DT\,.
\end{equation}

See Figure \ref{fig:moving-mesh-A}.
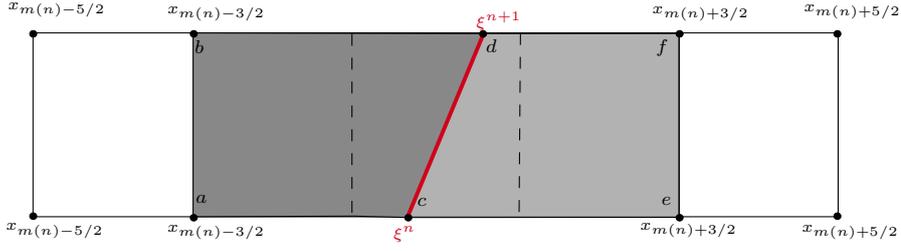
\begin{figure}%\begin{subfigure}
    \centering

\begin{tikzpicture}[x=0.75pt,y=0.75pt,yscale=-1,xscale=1]

%Shape: Rectangle [id:dp4767948451855559] 
\draw   (144.72,39.65) -- (546.64,39.65) -- (546.64,132.11) -- (144.72,132.11) -- cycle ;
%Straight Lines [id:da11571548031864798] 
\draw    (546.64,39.95) -- (546.64,132.42) ;
%Shape: Polygon [id:ds5705637777704398] 
\draw  [fill={rgb, 255:red, 128; green, 128; blue, 128 }  ,fill opacity=0.93 ] (369.95,40.03) -- (331.26,132.5) -- (305.33,131.8) -- (224.63,132.11) -- (224.63,39.65) -- cycle ;
%Shape: Polygon [id:ds038816595960278644] 
\draw  [fill={rgb, 255:red, 155; green, 155; blue, 155 }  ,fill opacity=0.77 ] (467.18,39.95) -- (467.18,132.42) -- (427.2,132.42) -- (331.26,132.5) -- (369.26,40.03) -- cycle ;
%Straight Lines [id:da35403025341788674] 
\draw  [dash pattern={on 4.5pt off 4.5pt}]  (303.95,40.11) -- (303.95,132.57) ;
%Straight Lines [id:da8116423460598678] 
\draw  [dash pattern={on 4.5pt off 4.5pt}]  (388.05,39.7) -- (387.36,132.16) ;
%Straight Lines [id:da40232655026932074] 
\draw [color={rgb, 255:red, 208; green, 2; blue, 27 }  ,draw opacity=1 ][line width=1.5]    (369.95,39.26) -- (331.26,133.27) ;
%Straight Lines [id:da8003596635899435] 
\draw    (467.28,39.95) -- (467.28,132.42) ;
%Flowchart: Connector [id:dp725277182292086] 
\draw  [fill={rgb, 255:red, 0; green, 0; blue, 0 }  ,fill opacity=1 ] (223.25,132.59) .. controls (223.25,131.69) and (224.02,130.96) .. (224.98,130.96) .. controls (225.93,130.96) and (226.7,131.69) .. (226.7,132.59) .. controls (226.7,133.5) and (225.93,134.23) .. (224.98,134.23) .. controls (224.02,134.23) and (223.25,133.5) .. (223.25,132.59) -- cycle ;
%Flowchart: Connector [id:dp660106017715911] 
\draw  [fill={rgb, 255:red, 0; green, 0; blue, 0 }  ,fill opacity=1 ] (223.25,40.13) .. controls (223.25,39.22) and (224.02,38.49) .. (224.98,38.49) .. controls (225.93,38.49) and (226.7,39.22) .. (226.7,40.13) .. controls (226.7,41.03) and (225.93,41.76) .. (224.98,41.76) .. controls (224.02,41.76) and (223.25,41.03) .. (223.25,40.13) -- cycle ;
%Flowchart: Connector [id:dp20580448012314911] 
\draw  [fill={rgb, 255:red, 0; green, 0; blue, 0 }  ,fill opacity=1 ] (143.1,40.13) .. controls (143.1,39.22) and (143.88,38.49) .. (144.83,38.49) .. controls (145.79,38.49) and (146.56,39.22) .. (146.56,40.13) .. controls (146.56,41.03) and (145.79,41.76) .. (144.83,41.76) .. controls (143.88,41.76) and (143.1,41.03) .. (143.1,40.13) -- cycle ;
%Flowchart: Connector [id:dp6607483518879442] 
\draw  [fill={rgb, 255:red, 0; green, 0; blue, 0 }  ,fill opacity=1 ] (143.1,131.82) .. controls (143.1,130.92) and (143.88,130.19) .. (144.83,130.19) .. controls (145.79,130.19) and (146.56,130.92) .. (146.56,131.82) .. controls (146.56,132.73) and (145.79,133.46) .. (144.83,133.46) .. controls (143.88,133.46) and (143.1,132.73) .. (143.1,131.82) -- cycle ;
%Flowchart: Connector [id:dp6215785921929202] 
\draw  [fill={rgb, 255:red, 0; green, 0; blue, 0 }  ,fill opacity=1 ] (330.34,132.59) .. controls (330.34,131.69) and (331.11,130.96) .. (332.07,130.96) .. controls (333.02,130.96) and (333.79,131.69) .. (333.79,132.59) .. controls (333.79,133.5) and (333.02,134.23) .. (332.07,134.23) .. controls (331.11,134.23) and (330.34,133.5) .. (330.34,132.59) -- cycle ;
%Flowchart: Connector [id:dp7782122780810674] 
\draw  [fill={rgb, 255:red, 0; green, 0; blue, 0 }  ,fill opacity=1 ] (465.76,132.59) .. controls (465.76,131.69) and (466.53,130.96) .. (467.48,130.96) .. controls (468.44,130.96) and (469.21,131.69) .. (469.21,132.59) .. controls (469.21,133.5) and (468.44,134.23) .. (467.48,134.23) .. controls (466.53,134.23) and (465.76,133.5) .. (465.76,132.59) -- cycle ;
%Flowchart: Connector [id:dp8560405294187605] 
\draw  [fill={rgb, 255:red, 0; green, 0; blue, 0 }  ,fill opacity=1 ] (465.76,40.13) .. controls (465.76,39.22) and (466.53,38.49) .. (467.48,38.49) .. controls (468.44,38.49) and (469.21,39.22) .. (469.21,40.13) .. controls (469.21,41.03) and (468.44,41.76) .. (467.48,41.76) .. controls (466.53,41.76) and (465.76,41.03) .. (465.76,40.13) -- cycle ;
%Flowchart: Connector [id:dp6752920759124958] 
\draw  [fill={rgb, 255:red, 0; green, 0; blue, 0 }  ,fill opacity=1 ] (544.52,40.13) .. controls (544.52,39.22) and (545.29,38.49) .. (546.25,38.49) .. controls (547.2,38.49) and (547.97,39.22) .. (547.97,40.13) .. controls (547.97,41.03) and (547.2,41.76) .. (546.25,41.76) .. controls (545.29,41.76) and (544.52,41.03) .. (544.52,40.13) -- cycle ;
%Flowchart: Connector [id:dp43231342549783336] 
\draw  [fill={rgb, 255:red, 0; green, 0; blue, 0 }  ,fill opacity=1 ] (544.52,131.82) .. controls (544.52,130.92) and (545.29,130.19) .. (546.25,130.19) .. controls (547.2,130.19) and (547.97,130.92) .. (547.97,131.82) .. controls (547.97,132.73) and (547.2,133.46) .. (546.25,133.46) .. controls (545.29,133.46) and (544.52,132.73) .. (544.52,131.82) -- cycle ;
%Flowchart: Connector [id:dp3689996314411834] 
\draw  [fill={rgb, 255:red, 0; green, 0; blue, 0 }  ,fill opacity=1 ] (367.65,40.13) .. controls (367.65,39.22) and (368.42,38.49) .. (369.38,38.49) .. controls (370.33,38.49) and (371.1,39.22) .. (371.1,40.13) .. controls (371.1,41.03) and (370.33,41.76) .. (369.38,41.76) .. controls (368.42,41.76) and (367.65,41.03) .. (367.65,40.13) -- cycle ;

% Text Node
\draw (224.66,119.98) node [anchor=north west][inner sep=0.75pt]  [font=\fontsize{0.71em}{0.85em}\selectfont]  {$a$};
% Text Node
\draw (224.2,42.02) node [anchor=north west][inner sep=0.75pt]  [font=\fontsize{0.71em}{0.85em}\selectfont]  {$b$};
% Text Node
\draw (335.11,121.54) node [anchor=north west][inner sep=0.75pt]  [font=\fontsize{0.71em}{0.85em}\selectfont]  {$c$};
% Text Node
\draw (369.52,41.64) node [anchor=north west][inner sep=0.75pt]  [font=\fontsize{0.71em}{0.85em}\selectfont]  {$d$};
% Text Node
\draw (457.1,120.59) node [anchor=north west][inner sep=0.75pt]  [font=\fontsize{0.71em}{0.85em}\selectfont]  {$e$};
% Text Node
\draw (454.39,41.79) node [anchor=north west][inner sep=0.75pt]  [font=\fontsize{0.71em}{0.85em}\selectfont]  {$f$};
% Text Node
\draw (210.53,134.85) node [anchor=north west][inner sep=0.75pt]  [font=\fontsize{0.59em}{0.71em}\selectfont]  {$x_{m( n) -3/2}$};
% Text Node
\draw (323.28,135.7) node [anchor=north west][inner sep=0.75pt]  [font=\fontsize{0.59em}{0.71em}\selectfont,color={rgb, 255:red, 208; green, 2; blue, 27 }  ,opacity=1 ]  {$\textcolor[rgb]{0.82,0.01,0.11}{\xi }\textcolor[rgb]{0.82,0.01,0.11}{^{n}}$};
% Text Node
\draw (364.6,27.83) node [anchor=north west][inner sep=0.75pt]  [font=\fontsize{0.59em}{0.71em}\selectfont,color={rgb, 255:red, 208; green, 2; blue, 27 }  ,opacity=1 ]  {$\textcolor[rgb]{0.82,0.01,0.11}{\xi }\textcolor[rgb]{0.82,0.01,0.11}{^{n+1}}$};
% Text Node
\draw (527.32,134.85) node [anchor=north west][inner sep=0.75pt]  [font=\fontsize{0.59em}{0.71em}\selectfont]  {$x_{m( n) +5/2}$};
% Text Node
\draw (129.92,133.97) node [anchor=north west][inner sep=0.75pt]  [font=\fontsize{0.59em}{0.71em}\selectfont]  {$x_{m( n) -5/2}$};
% Text Node
\draw (446.62,133.92) node [anchor=north west][inner sep=0.75pt]  [font=\fontsize{0.59em}{0.71em}\selectfont]  {$x_{m( n) +3/2}$};
% Text Node
\draw (130.92,22.97) node [anchor=north west][inner sep=0.75pt]  [font=\fontsize{0.59em}{0.71em}\selectfont]  {$x_{m( n) -5/2}$};
% Text Node
\draw (210.53,24.85) node [anchor=north west][inner sep=0.75pt]  [font=\fontsize{0.59em}{0.71em}\selectfont]  {$x_{m( n) -3/2}$};
% Text Node
\draw (452.62,24.92) node [anchor=north west][inner sep=0.75pt]  [font=\fontsize{0.59em}{0.71em}\selectfont]  {$x_{m( n) +3/2}$};
% Text Node
\draw (528.32,23.85) node [anchor=north west][inner sep=0.75pt]  [font=\fontsize{0.59em}{0.71em}\selectfont]  {$x_{m( n) +5/2}$};

\end{tikzpicture}
    \caption{Illustration of mesh adaptation for Case \textbf{A} }
    \label{fig:moving-mesh-A}%\end{subfigure}
\end{figure}

\medskip
\boxed{\textbf{Case B:}\ m(n+1) = m(n)+1}\quad Here %it can be noticed that 
the line joining $(\xi^n, t^n)$ and $(\xi^{n+1}, t^{n+1})$ crosses the vertical line $x = x_{m(n)+1/2}$ of the reference mesh. See Figure \ref{fig:moving-mesh-B-def-of-rho}. 

\begin{comment}
From the definition \eqref{eq:moving-mesh-n} we get
%The moving mesh at $t^{n+1},$ is similarly updated as
\begin{equation}
    \label{eq:moving-mesh-n+1}
    \xx^{n+1}_{j+1/2} = \begin{cases}
        x_{j-1/2}, &\text{if } j\leq m(n+1)-1= m(n),\\
        \xi^{n+1}, &\text{if } j = m(n+1)=m(n)+1,\\
        x_{j+1/2}, &\text{if } j\geq m(n+1)+1= m(n)+2.
    \end{cases}
\end{equation}
\end{comment}
 %The intervals $D^{n+1}_L = \left[x_{m(n+1)-3/2},\, \xi^{n+1}\right)$ and $D^{n+1}_R =\left[\xi^{n+1},\, x_{m(n+1)+3/2}\right)$ are the cells adjacent to $\xi^{n+1}$ respectively from the left and right.  With the exception $ D^{n+1}_L$ and $D^{n+1}_R$, all other intervals at $t^{n+1}$ have the fixed size of $\DX.$
 
To illustrate the mesh change from time $t^n$ to $t^{n+1}$, let us consider the points corresponding to the four indexes $j=m(n)-1\,,\ldots\,,m(n)+2$:
\begin{align*}
    t^{n+1}:\qquad &x_{m(n+1)-5/2}\,, \qquad x_{m(n+1)-3/2}\,,\quad\quad\quad
    {\bord \xi^{n+1}}\,, \qquad x_{m(n+1)+3/2}\,;\\
    t^n:\qquad &x_{m(n)-3/2}\,, \qquad\quad {\bord \xi^n}\,,\qquad\quad\quad\quad x_{m(n)+3/2}\,, \qquad x_{m(n)+5/2}  \,.
\end{align*}
Note that the grid point $x_{m(n)-1/2} = x_{m(n+1)-3/2}$ is present at time $t^{n+1}$ and not at time $t^n$, whereas the grid point $x_{m(n)+3/2} =x_{m(n+1)+1/2}$ is present at time $t^n$ but it is excluded from the mesh at $t^{n+1}$.

The following identities hold:
%We remark for future use that 
\begin{equation}%\label{eq:lenght-cellsLR-A}
\label{caseB-id_DX}
\DX + \Delta^{n+1}_L= \Delta^n_L +\alpha_{n,\DT} \DT\,,\qquad 
\Delta^{n+1}_R= \Delta^n_R -\alpha_{n,\DT} \DT + \DX\,.    \end{equation}

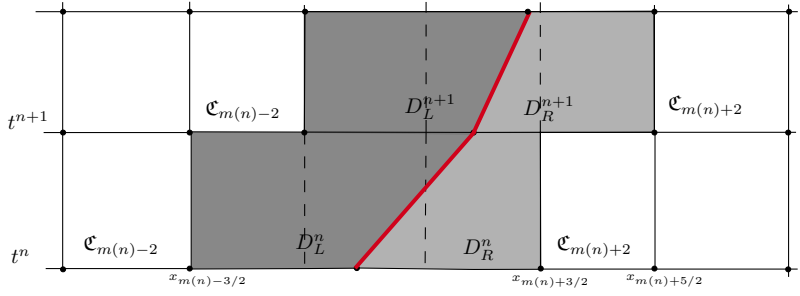
\begin{figure}[htbp]
    \centering

\begin{tikzpicture}[x=0.75pt,y=0.75pt,yscale=-0.8,xscale=0.8]
%uncomment if require: \path (0,301); %set diagram left start at 0, and has height of 301

%Shape: Polygon [id:ds9283725186687424] 
\draw  [fill={rgb, 255:red, 155; green, 155; blue, 155 }  ,fill opacity=0.77 ] (467.13,85.87) -- (467.4,162) -- (430.52,162.11) -- (355.18,162.18) -- (389.73,86.13) -- cycle ;
%Shape: Polygon [id:ds11353730239375537] 
\draw  [fill={rgb, 255:red, 128; green, 128; blue, 128 }  ,fill opacity=0.93 ] (390.29,86.13) -- (353.62,163.18) -- (287.76,162.47) -- (249.03,162.18) -- (249.42,86.13) -- cycle ;
%Straight Lines [id:da11571548031864798] 
\draw    (467.81,162.02) -- (467.81,248.26) ;
%Shape: Polygon [id:ds5705637777704398] 
\draw  [fill={rgb, 255:red, 128; green, 128; blue, 128 }  ,fill opacity=0.93 ] (355.52,162.09) -- (280.73,247.47) -- (251.08,247.69) -- (178.4,247.97) -- (178.4,161.73) -- cycle ;
%Shape: Polygon [id:ds038816595960278644] 
\draw  [fill={rgb, 255:red, 155; green, 155; blue, 155 }  ,fill opacity=0.77 ] (396.25,162.02) -- (396.52,248.15) -- (354.64,248.26) -- (280.29,247.33) -- (355.52,162.09) -- cycle ;
%Straight Lines [id:da35403025341788674] 
\draw  [dash pattern={on 4.5pt off 4.5pt}]  (249.21,162.16) -- (249.21,248.4) ;
%Straight Lines [id:da8116423460598678] 
\draw  [dash pattern={on 4.5pt off 4.5pt}]  (324.97,161.78) -- (324.35,248.02) ;
%Straight Lines [id:da8003596635899435] 
\draw    (396.33,161.3) -- (396.33,247.54) ;
%Flowchart: Connector [id:dp9467335908589776] 
\draw  [fill={rgb, 255:red, 0; green, 0; blue, 0 }  ,fill opacity=1 ] (175.91,247.7) .. controls (175.91,246.86) and (176.6,246.18) .. (177.46,246.18) .. controls (178.32,246.18) and (179.02,246.86) .. (179.02,247.7) .. controls (179.02,248.55) and (178.32,249.23) .. (177.46,249.23) .. controls (176.6,249.23) and (175.91,248.55) .. (175.91,247.7) -- cycle ;
%Flowchart: Connector [id:dp9464557653091523] 
\draw  [fill={rgb, 255:red, 0; green, 0; blue, 0 }  ,fill opacity=1 ] (280.21,247.42) .. controls (280.21,246.58) and (280.91,245.9) .. (281.77,245.9) .. controls (282.63,245.9) and (283.33,246.58) .. (283.33,247.42) .. controls (283.33,248.27) and (282.63,248.95) .. (281.77,248.95) .. controls (280.91,248.95) and (280.21,248.27) .. (280.21,247.42) -- cycle ;
%Flowchart: Connector [id:dp4652010392874997] 
\draw  [fill={rgb, 255:red, 0; green, 0; blue, 0 }  ,fill opacity=1 ] (394.96,247.7) .. controls (394.96,246.86) and (395.66,246.18) .. (396.52,246.18) .. controls (397.38,246.18) and (398.07,246.86) .. (398.07,247.7) .. controls (398.07,248.55) and (397.38,249.23) .. (396.52,249.23) .. controls (395.66,249.23) and (394.96,248.55) .. (394.96,247.7) -- cycle ;
%Flowchart: Connector [id:dp3003208088885072] 
\draw  [fill={rgb, 255:red, 0; green, 0; blue, 0 }  ,fill opacity=1 ] (465.91,162.18) .. controls (465.91,161.34) and (466.6,160.65) .. (467.46,160.65) .. controls (468.32,160.65) and (469.02,161.34) .. (469.02,162.18) .. controls (469.02,163.03) and (468.32,163.71) .. (467.46,163.71) .. controls (466.6,163.71) and (465.91,163.03) .. (465.91,162.18) -- cycle ;
%Flowchart: Connector [id:dp8379071120376109] 
\draw  [fill={rgb, 255:red, 0; green, 0; blue, 0 }  ,fill opacity=1 ] (465.91,247.7) .. controls (465.91,246.86) and (466.6,246.18) .. (467.46,246.18) .. controls (468.32,246.18) and (469.02,246.86) .. (469.02,247.7) .. controls (469.02,248.55) and (468.32,249.23) .. (467.46,249.23) .. controls (466.6,249.23) and (465.91,248.55) .. (465.91,247.7) -- cycle ;
%Flowchart: Connector [id:dp37847553106429843] 
\draw  [fill={rgb, 255:red, 0; green, 0; blue, 0 }  ,fill opacity=1 ] (175.91,162.18) .. controls (175.91,161.34) and (176.6,160.65) .. (177.46,160.65) .. controls (178.32,160.65) and (179.02,161.34) .. (179.02,162.18) .. controls (179.02,163.03) and (178.32,163.71) .. (177.46,163.71) .. controls (176.6,163.71) and (175.91,163.03) .. (175.91,162.18) -- cycle ;
%Flowchart: Connector [id:dp815273281636596] 
\draw  [fill={rgb, 255:red, 0; green, 0; blue, 0 }  ,fill opacity=1 ] (247.47,162.18) .. controls (247.47,161.34) and (248.17,160.65) .. (249.03,160.65) .. controls (249.89,160.65) and (250.58,161.34) .. (250.58,162.18) .. controls (250.58,163.03) and (249.89,163.71) .. (249.03,163.71) .. controls (248.17,163.71) and (247.47,163.03) .. (247.47,162.18) -- cycle ;
%Flowchart: Connector [id:dp8687141339367515] 
\draw  [fill={rgb, 255:red, 0; green, 0; blue, 0 }  ,fill opacity=1 ] (353.06,162.18) .. controls (353.06,161.34) and (353.76,160.65) .. (354.62,160.65) .. controls (355.48,160.65) and (356.18,161.34) .. (356.18,162.18) .. controls (356.18,163.03) and (355.48,163.71) .. (354.62,163.71) .. controls (353.76,163.71) and (353.06,163.03) .. (353.06,162.18) -- cycle ;
%Straight Lines [id:da03057956836774922] 
\draw    (87.62,247.7) -- (179.02,247.7) ;
%Straight Lines [id:da8924864595749573] 
\draw    (86.33,162.18) -- (175.91,162.18) ;
%Straight Lines [id:da5578282687705587] 
\draw    (98.22,247.9) -- (98.22,80.18) ;
%Flowchart: Connector [id:dp9604199602406562] 
\draw  [fill={rgb, 255:red, 0; green, 0; blue, 0 }  ,fill opacity=1 ] (96.91,162.18) .. controls (96.91,161.34) and (97.6,160.65) .. (98.46,160.65) .. controls (99.32,160.65) and (100.02,161.34) .. (100.02,162.18) .. controls (100.02,163.03) and (99.32,163.71) .. (98.46,163.71) .. controls (97.6,163.71) and (96.91,163.03) .. (96.91,162.18) -- cycle ;
%Flowchart: Connector [id:dp48254621692874] 
\draw  [fill={rgb, 255:red, 0; green, 0; blue, 0 }  ,fill opacity=1 ] (96.91,248.18) .. controls (96.91,247.34) and (97.6,246.65) .. (98.46,246.65) .. controls (99.32,246.65) and (100.02,247.34) .. (100.02,248.18) .. controls (100.02,249.03) and (99.32,249.71) .. (98.46,249.71) .. controls (97.6,249.71) and (96.91,249.03) .. (96.91,248.18) -- cycle ;
%Straight Lines [id:da5629269335057088] 
\draw    (396.25,162.02) -- (555.82,162.02) ;
%Straight Lines [id:da5495943141916204] 
\draw    (395.25,248.02) -- (556.82,248.02) ;
%Straight Lines [id:da8807966183288736] 
\draw    (81.33,86.18) -- (556.91,86.18) ;
%Straight Lines [id:da36721814050862056] 
\draw    (177.46,160.65) -- (177.46,79.94) ;
%Straight Lines [id:da9990931520014916] 
\draw    (249.03,163.71) -- (249.03,81.99) ;
%Straight Lines [id:da8664982057640926] 
\draw  [dash pattern={on 4.5pt off 4.5pt}]  (324.97,161.78) -- (324.97,80.06) ;
%Straight Lines [id:da36938230957982854] 
\draw  [dash pattern={on 4.5pt off 4.5pt}]  (396.25,162.02) -- (396.25,80.3) ;
%Straight Lines [id:da6234137109169979] 
\draw    (467.46,163.71) -- (467.46,81.99) ;
%Straight Lines [id:da6017172551186387] 
\draw [color={rgb, 255:red, 208; green, 2; blue, 27 }  ,draw opacity=1 ][line width=1.5]    (389.73,86.13) -- (354.62,162.18) ;
%Straight Lines [id:da40972018864939863] 
\draw    (551.15,81.69) -- (551.15,252.93) ;
%Straight Lines [id:da40232655026932074] 
\draw [color={rgb, 255:red, 208; green, 2; blue, 27 }  ,draw opacity=1 ][line width=1.5]    (355.52,161.37) -- (280.73,247.47) ;
%Flowchart: Connector [id:dp9066997407171249] 
\draw  [fill={rgb, 255:red, 0; green, 0; blue, 0 }  ,fill opacity=1 ] (96.91,86.18) .. controls (96.91,85.34) and (97.6,84.65) .. (98.46,84.65) .. controls (99.32,84.65) and (100.02,85.34) .. (100.02,86.18) .. controls (100.02,87.03) and (99.32,87.71) .. (98.46,87.71) .. controls (97.6,87.71) and (96.91,87.03) .. (96.91,86.18) -- cycle ;
%Flowchart: Connector [id:dp6468736098614529] 
\draw  [fill={rgb, 255:red, 0; green, 0; blue, 0 }  ,fill opacity=1 ] (175.91,86.18) .. controls (175.91,85.34) and (176.6,84.65) .. (177.46,84.65) .. controls (178.32,84.65) and (179.02,85.34) .. (179.02,86.18) .. controls (179.02,87.03) and (178.32,87.71) .. (177.46,87.71) .. controls (176.6,87.71) and (175.91,87.03) .. (175.91,86.18) -- cycle ;
%Flowchart: Connector [id:dp47724576552029774] 
\draw  [fill={rgb, 255:red, 0; green, 0; blue, 0 }  ,fill opacity=1 ] (247.91,86.18) .. controls (247.91,85.34) and (248.6,84.65) .. (249.46,84.65) .. controls (250.32,84.65) and (251.02,85.34) .. (251.02,86.18) .. controls (251.02,87.03) and (250.32,87.71) .. (249.46,87.71) .. controls (248.6,87.71) and (247.91,87.03) .. (247.91,86.18) -- cycle ;
%Flowchart: Connector [id:dp5394344617345561] 
\draw  [fill={rgb, 255:red, 0; green, 0; blue, 0 }  ,fill opacity=1 ] (386.91,86.18) .. controls (386.91,85.34) and (387.6,84.65) .. (388.46,84.65) .. controls (389.32,84.65) and (390.02,85.34) .. (390.02,86.18) .. controls (390.02,87.03) and (389.32,87.71) .. (388.46,87.71) .. controls (387.6,87.71) and (386.91,87.03) .. (386.91,86.18) -- cycle ;
%Flowchart: Connector [id:dp46514328115307313] 
\draw  [fill={rgb, 255:red, 0; green, 0; blue, 0 }  ,fill opacity=1 ] (465.91,86.18) .. controls (465.91,85.34) and (466.6,84.65) .. (467.46,84.65) .. controls (468.32,84.65) and (469.02,85.34) .. (469.02,86.18) .. controls (469.02,87.03) and (468.32,87.71) .. (467.46,87.71) .. controls (466.6,87.71) and (465.91,87.03) .. (465.91,86.18) -- cycle ;
%Flowchart: Connector [id:dp1773940526324993] 
\draw  [fill={rgb, 255:red, 0; green, 0; blue, 0 }  ,fill opacity=1 ] (549.91,86.18) .. controls (549.91,85.34) and (550.6,84.65) .. (551.46,84.65) .. controls (552.32,84.65) and (553.02,85.34) .. (553.02,86.18) .. controls (553.02,87.03) and (552.32,87.71) .. (551.46,87.71) .. controls (550.6,87.71) and (549.91,87.03) .. (549.91,86.18) -- cycle ;
%Flowchart: Connector [id:dp8769768813752263] 
\draw  [fill={rgb, 255:red, 0; green, 0; blue, 0 }  ,fill opacity=1 ] (549.36,162.18) .. controls (549.36,161.34) and (550.05,160.65) .. (550.91,160.65) .. controls (551.77,160.65) and (552.47,161.34) .. (552.47,162.18) .. controls (552.47,163.03) and (551.77,163.71) .. (550.91,163.71) .. controls (550.05,163.71) and (549.36,163.03) .. (549.36,162.18) -- cycle ;
%Flowchart: Connector [id:dp4047713265961774] 
\draw  [fill={rgb, 255:red, 0; green, 0; blue, 0 }  ,fill opacity=1 ] (549.36,248.18) .. controls (549.36,247.34) and (550.05,246.65) .. (550.91,246.65) .. controls (551.77,246.65) and (552.47,247.34) .. (552.47,248.18) .. controls (552.47,249.03) and (551.77,249.71) .. (550.91,249.71) .. controls (550.05,249.71) and (549.36,249.03) .. (549.36,248.18) -- cycle ;

% Text Node
\draw (163.16,250.18) node [anchor=north west][inner sep=0.75pt]  [font=\fontsize{0.59em}{0.71em}\selectfont,xscale=0.8,yscale=0.8]  {$x_{m( n) -3/2}$};
% Text Node
\draw (448.45,250.18) node [anchor=north west][inner sep=0.75pt]  [font=\fontsize{0.59em}{0.71em}\selectfont,xscale=0.8,yscale=0.8]  {$x_{m( n) +5/2}$};
% Text Node
\draw (377.19,250.9) node [anchor=north west][inner sep=0.75pt]  [font=\fontsize{0.59em}{0.71em}\selectfont,xscale=0.8,yscale=0.8]  {$x_{m( n) +3/2}$};
% Text Node
\draw (110,225.73) node [anchor=north west][inner sep=0.75pt]  [xscale=0.8,yscale=0.8]  {$\mathfrak{C}_{m( n) -2}$};
% Text Node
\draw (405.33,225.73) node [anchor=north west][inner sep=0.75pt]  [xscale=0.8,yscale=0.8]  {$\mathfrak{C}_{m( n) +2}$};
% Text Node
\draw (242.33,224.73) node [anchor=north west][inner sep=0.75pt]  [xscale=0.8,yscale=0.8]  {$D_{L}^{n}$};
% Text Node
\draw (346.67,228.07) node [anchor=north west][inner sep=0.75pt]  [xscale=0.8,yscale=0.8]  {$D_{R}^{n}$};
% Text Node
\draw (310.33,136.73) node [anchor=north west][inner sep=0.75pt]  [xscale=0.8,yscale=0.8]  {$D_{L}^{n+1}$};
% Text Node
\draw (384.33,138.07) node [anchor=north west][inner sep=0.75pt]  [xscale=0.8,yscale=0.8]  {$D_{R}^{n+1}$};
% Text Node
\draw (64.8,234.2) node [anchor=north west][inner sep=0.75pt]  [xscale=0.8,yscale=0.8]  {$t^{n}$};
% Text Node
\draw (62.8,147.53) node [anchor=north west][inner sep=0.75pt]  [xscale=0.8,yscale=0.8]  {$t^{n+1}$};
% Text Node
\draw (186,140.4) node [anchor=north west][inner sep=0.75pt]  [xscale=0.8,yscale=0.8]  {$\mathfrak{C}_{m( n) -2}$};
% Text Node
\draw (474.67,137.73) node [anchor=north west][inner sep=0.75pt]  [xscale=0.8,yscale=0.8]  {$\mathfrak{C}_{m( n) +2}$};

\end{tikzpicture}   
    \caption{\small{%Illustration of 
    Case B. %for $\alpha_{n,\DT}>0$. 
    Dashed lines indicate that the corresponding cell boundary is removed.
    }}
    \label{fig:moving-mesh-B-def-of-rho}
\end{figure}
 
\medskip
\boxed{\textbf{Case C:}\ m(n+1) = m(n)- 1}\quad
This case is possible only if $\alpha_{n,\DT} < 0$ and is peculiar
to case \textbf{B}:
\begin{comment}
\begin{equation}
    \label{eq:moving-mesh-(n-1)}
    \xx^{n+1}_{j+1/2} = \begin{cases}
        x_{j-1/2}, &\text{if } j\leq m(n+1)-1= m(n)-2,\\
        \xi^{n+1}, &\text{if } j = m(n+1)=m(n)-1,\\
        x_{j+1/2}, &\text{if } j\geq m(n+1)+1= m(n).
    \end{cases}
\end{equation}
and hence
\end{comment}
\begin{align*}
    t^{n+1}:\qquad &x_{m(n+1)-3/2}\,,\quad\quad\quad {\bord \xi^{n+1}}\,,  \qquad x_{m(n+1)+3/2}\,,
    \qquad x_{m(n+1)+5/2}\,;\\
    t^n:\qquad &x_{m(n)-5/2}\,, \quad\quad x_{m(n)-3/2}\,,\qquad\quad {\bord \xi^n}\,,\qquad\quad \qquad x_{m(n)+3/2}  \,.
\end{align*}
The following identities hold:
%We remark for future use that 
\begin{equation}%\label{eq:lenght-cellsLR-A}
\label{caseC-id_DX}
\Delta^{n+1}_L= \Delta^n_L +\alpha_{n,\DT} \DT + \DX
\,,\qquad 
\DX + \Delta^{n+1}_R= \Delta^n_R - \alpha_{n,\DT} \DT
\,.    \end{equation}

\subsection{The numerical fluxes}
In this subsection, we introduce the numerical fluxes employed in the scheme.

Define
\begin{equation}\label{hpm}
    h^\pm(a,b) : = \begin{cases}
    \min_{[a,b]} (\pm f), & \mbox { if }a\le b,\\
    \max_{[b,a]} (\pm f), & \mbox { if }a \ge b
    \end{cases}
\end{equation}
and, for $\alpha\in\eR$, 
\begin{equation}\label{h0}
h_0(a,b,\alpha) = \begin{cases}
    f(b) - \alpha b = - b \left(\alpha - v(b) \right), &\mbox{ if }\alpha \ge v(b),\\
     - f(a) - \alpha a = a \left( -\alpha - v(a)  \right), &\mbox{ if }\alpha \le - v(a)\\
    0, & \mbox{ otherwise}\,,
\end{cases}    
\end{equation}
or equivalently 
\begin{align} %\nonumber
      h_0(a,b,\alpha) & = -b \,[v(b) -\alpha ]_-  + a \, [v(a)+\alpha]_-,   \label{h0-bis}
%\\    & = - [f(b) - \alpha b]_-  +  [f(a) + \alpha a]_-  
\end{align}
where $[x]_- = \max\{-x,0\}$. %%%%%%%%$[x]_+ = \max\{x,0\}$ and % = [-x]_+$.

The maps $h^\pm$ represent the Godunov fluxes at a vertical boundary of a cell, corresponding to the fluxes $\pm f$ respectively, while  
the map $h_0:[0,1]\times[0,1]\times\eR$ represents the flux along the possibly non-vertical boundary cell with slope $\alpha$.
%$\xi(t)$, with $\xi'(t) = \alpha$ 
%for $t\in(t^n,t^{n+1})$. 

Notice that
\begin{equation*}
    h^-(a,b)  = - 
     \begin{cases}
    \max_{[a,b]} f, & \mbox { if }a\le b,\\
    \min_{[b,a]} f, & \mbox { if }a\ge b,
    \end{cases}
\end{equation*}  
and hence $h^-(a,b)  = - h^+ (b,a)$.

Moreover, the maps $h^\pm$, $h_0$ are Lipschitz continuous with Lipschitz constant
\begin{equation}\label{Lip-num-fluxes}
    Lip(h^\pm) \le  \|f'\|_\infty \,,\qquad
    Lip(h_0) \le   \|f'\|_\infty + |\alpha|\,.
\end{equation}

We stress that the numerical fluxes $h^\pm$, $h_0$ are based on the exact Riemann solver at the interface. See Section \ref{sec:riemann}. In the following lemma, we provide some monotonicity properties of the numerical fluxes and the value of $h_0$ when its arguments $(a,b,\alpha)$ 
satisfy the Rankine-Hugoniot condition \eqref{eq:Rankine-hugoniot-main} at the flux interface. 
%This property will serve as the analogous of {\bf (i)} for the fluxes $h^\pm$.

\begin{lemma} \label{rem:mon-fluxes} %{\bf (Monotonicity of the fluxes)}
For any given $a$ and $b$ in $[0,1]$ the following hold
\begin{itemize}
\item[{\bf (i)}] $h^\pm(a,a)=\pm f(a)$, 
\item[{\bf (ii)}] $h^\pm(a,b)$ and $h_0(a,b, \alpha)$ are nondecreasing with respect to $a$ and nonincreasing with respect to $b$, for all $\alpha\in \eR$,
\item[{\bf (iii)}] $h_0(a,b,0) = 0$,
\item[{\bf (iv)}] for all $\alpha$, %if $\alpha\neq 0$,  
\begin{equation}\label{eq:h0cc}
 h_0(a,a,\alpha) =    \begin{cases}
        -\alpha a + f(a), \quad & \mbox{ if } \alpha\ge v(a) \,,\\
        -\alpha a - f(a), \quad & \mbox{ if } \alpha\le -v(a) \,,\\
        0, \quad & \mbox{ otherwise.}
    \end{cases}
\end{equation}
\item[{\bf (v)}]  If \begin{equation}\label{hyp:h0-germ}
     f(b)+f(a) = \alpha (b-a),
 \end{equation}
then 
\begin{equation}\label{eq:h0-germ}
     h_0(a,b,\alpha)  = f(b)-\alpha b = -\alpha a - f(a)\,. 
\end{equation}
\end{itemize}
\end{lemma}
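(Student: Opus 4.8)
The plan is to verify the five items by direct computation, organizing the work so that the genuine algebra is confined to a single identity. Items (i), (iii) and (iv) are substitutions. For (i) I set $a=b$ in \eqref{hpm}: both branches collapse to the min/max of $\pm f$ over the singleton $\{a\}$, giving $\pm f(a)$. For (iv) I put $b=a$ in the three branches of \eqref{h0} and read off \eqref{eq:h0cc} exactly (the three regimes are disjoint except on $\{v(a)=0\}$, where $f(a)=0$ and all formulas agree). For (iii) I use the equivalent form \eqref{h0-bis}: since $f\ge0$ on $[0,1]$ by \eqref{eq:hyp-on-f} forces $v\ge0$, we get $[v(b)]_-=[v(a)]_-=0$, whence $h_0(a,b,0)=0$.

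For the monotonicity in (ii) I would treat $h^+$ and $h_0$ separately and recover $h^-$ for free. For $h^+$ I would use concavity to pass to an explicit unimodal expression: with $\bar\rho$ a maximizer of $f$, a short case check against \eqref{hpm} gives $h^+(a,b)=\min\{f(\min\{a,\bar\rho\}),\,f(\max\{b,\bar\rho\})\}$, because for concave $f$ the minimum over an interval sits at an endpoint and the maximum at the clamped value $\bar\rho$. From this form monotonicity is transparent: $a\mapsto f(\min\{a,\bar\rho\})$ is nondecreasing, $b\mapsto f(\max\{b,\bar\rho\})$ is nonincreasing, and a minimum of such maps inherits both monotonicities. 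Then $h^-$ is nondecreasing in $a$ and nonincreasing in $b$ by the identity $h^-(a,b)=-h^+(b,a)$ noted just before the lemma. For $h_0$ I argue from \eqref{h0-bis}: the term $a[v(a)+\alpha]_-$ is the product of the nonnegative nondecreasing map $a\mapsto a$ with $a\mapsto[v(a)+\alpha]_-$, which is nonnegative and nondecreasing because $v$ is non-increasing; a product of two nonnegative nondecreasing functions is nondecreasing, so this term is nondecreasing in $a$ and constant in $b$. Symmetrically $b\mapsto b[v(b)-\alpha]_-$ is nonnegative nondecreasing, so $-b[v(b)-\alpha]_-$ is nonincreasing in $b$ and constant in $a$. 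Adding the two gives the claim.

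The only item carrying real algebraic content is (v). The first step is to rewrite the germ relation \eqref{hyp:h0-germ}, $f(b)+f(a)=\alpha(b-a)$, using $f(\rho)=\rho\,v(\rho)$, as the single identity
\begin{equation*}
    b\,(v(b)-\alpha)\;=\;-a\,(v(a)+\alpha)\;=:\;V,
\end{equation*}
valid for all $a,b\in[0,1]$. This already yields the stated equality $f(b)-\alpha b=-\alpha a-f(a)$ in \eqref{eq:h0-germ}, those two quantities being exactly $V$ and $-a(v(a)+\alpha)$. It then remains to identify $h_0(a,b,\alpha)$ with $V$. Reading \eqref{h0-bis} and using only $a,b\ge0$, one has $-b[v(b)-\alpha]_- =\min\{V,0\}$ and $a[v(a)+\alpha]_- =\max\{V,0\}$; summing gives $h_0=\min\{V,0\}+\max\{V,0\}=V=f(b)-\alpha b$, which is \eqref{eq:h0-germ}.

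I expect the one genuine subtlety to lie in (v). A naive approach invites a large case analysis over the branches of \eqref{h0} for each of $a$ and $b$, together with the boundary situations $a=0$, $b=0$, and $v(\cdot)=\alpha$. The reformulation $b(v(b)-\alpha)=-a(v(a)+\alpha)=V$ dissolves this: using only $a,b\ge0$, the two summands of \eqref{h0-bis} are precisely the negative part $\min\{V,0\}$ and the positive part $\max\{V,0\}$ of the same number $V$, so their sum telescopes to $V$ with no case distinction. Recognizing that the germ relation forces the two $[\,\cdot\,]_-$ terms to be the complementary parts of a single quantity is the crux; everything else is substitution or the standard monotonicity of the Godunov flux.
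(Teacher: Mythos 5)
Your proposal is correct and follows essentially the same route as the paper: items (i)--(iv) are handled as direct consequences of the definitions together with the monotonicity of $\rho\mapsto\rho[v(\rho)+\mathrm{const}]_-$, and your argument for (v) is exactly the paper's telescoping computation, since your quantity $V$ is $-(\alpha b - f(b))$ and your identity $\min\{V,0\}+\max\{V,0\}=V$ is the paper's $[x]_+-[x]_-=x$ applied to $h_0=-[\alpha b-f(b)]_+ +[\alpha a+f(a)]_-$ after substituting the germ relation. The only material you add is the explicit unimodal form $h^+(a,b)=\min\{f(a\wedge\bar\rho),f(b\vee\bar\rho)\}$ for part (ii), which correctly fleshes out what the paper leaves implicit.
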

\begin{proof}
All of the properties {\bf (i)}--{\bf (iv)} are direct consequences of the definitions in \eqref{hpm}--\eqref{h0-bis} and the fact that the map 
$\rho\mapsto \rho[v(\rho) + const.]_-$ is nondecreasing, being the composition of two nonincreasing maps.

Concerning {\bf (v)}, 
\begin{comment} %NO need to distinguish \alpha=0 or \not = 0
if $\alpha = 0$, condition \eqref{hyp:h0-germ} implies that $f(a) = - f(b)$, and since $f$ takes its values in $[0, \max (f)]$, this means that $f(a) = f(b) = 0$ and \eqref{eq:h0-germ} is satisfied. Otherwise,
\end{comment}
by \eqref{h0-bis} and \eqref{hyp:h0-germ}
\begin{align*}
      h_0(a,b,\alpha) 
    & = - [\alpha b -f(b)]_+  +  [\alpha a +f(a)]_-  \\
    &= - [\alpha b -f(b)]_+  +  [\alpha b - f(b)]_- \\
    & = - \alpha b + f(b),
\end{align*}
where we used the identity $[x]_+ - [x]_- = x$. Hence we get the first equality in \eqref{eq:h0-germ}; 
the second one follows immediately from \eqref{hyp:h0-germ}. 
\end{proof}

%\begin{lemma}\label{prop:h0-germ}
%\end{lemma}
%\begin{proof}
%\end{proof} 
  
\subsection{The numerical scheme}
In this subsection we define a family of approximate solution to \eqref{eq:main}. We assume the notation of Definition~\ref{def:the-mesh} 
and the CFL condition \eqref{eq:CFL-cond} for the space and time steps $\DX$ and $\DT$. 
%Recalling the definition of $\CCC^n_j$ at \eqref{def:new-cells} and the property \eqref{eq:union-Cnj-equals-I}, 

The approximate solution $\rho^{\DX}(x,t)$, at time $t^n$, takes the form
\begin{equation}\label{eq:rhoDX-at-tn}
    \rho^\DX(x,t^n) = \rho^n_j \qquad x\in \CCC^n_j\,,\qquad j\not = m(n)\,,\quad |j|\le \frac 1 \DX% 1+ 2^N %j=-2^N,\ldots, 2^N
    \,,\quad n\ge 0
\end{equation}
for suitable values $\rho^n_j$ to be defined and $j\in \eZ$.
%for suitable values $\rho^n_j$ to be defined later on. 
%that is,
%\begin{align}
%\rho^\DX(x,t^n) &= \sum_{j\in\{-2^N,\ldots, 2^N\}\setminus\{m(n)\} }\rho^n_j  \cdot \mathbbm{1}_{\CCC^n_j}(x)  \,.
%\end{align}
In addition, we take into account the boundary conditions by considering two extra values for the density:
\begin{equation}\label{eq:boundary-values-0}
    \rho^n_{\pm(1 + 1/\DX)}=0\,,\qquad n\ge 0
\end{equation}
on the two extra cells
$$
C_{-\frac 1\DX-1}:= \left[x_{-\frac 1\DX-\frac32}, x_{-\frac 1 \DX-\frac12}\right)\,,\qquad  C_{1+ \frac 1\DX}:= \left[x_{\frac 1\DX+\frac12}, x_{\frac 1\DX+\frac32}\right)\,.
%C_{-2^N}:= \left[x_{-2^N-\frac12}, x_{-2^N+\frac12}\right)\,,\qquad  C_{2^N}:= \left[x_{2^N-\frac12}, x_{2^N+\frac12}\right)\,.
$$

For convenience, we will use the notation
\begin{equation}\label{rho-n-L-R}
\rho^n_L := \rho^n_{m(n)-1}\,,\qquad \rho^n_R := \rho^n_{m(n)+1}  \,.    
\end{equation}
\medskip
%After having provided the general definition of the approximate solution $\rho^\DX$, 
Let's proceed with the definition of the values $\rho^n_j$.

\boxed{\textbf{Initialization:}}
At $t=0$ the initial data $\rho_0(x)\in L^\infty(-1,1)$ %\Lp\infty(\eR)$ 
is discretized by the local average on the cell:
\begin{equation}\label{eq:rho-0-j}
    \rho^0_j = \frac 1 {\DX^0_j%|\CCC^0_j|
    } \int_{\CCC^0_j} \rho_0(x)\, dx\,,\qquad 
    %j\in\{-2^N,\ldots, 2^N\}\setminus\{m(0)\}, 
    j\not = m(0)\,, %\ j=-2^N,\ldots, 2^N
\end{equation}
with $\CCC^0_j$, $\DX^0_j$ as in \eqref{def:new-cells}, \eqref{def:cell-size} for $n=0$.
%, $j=J_0=-2^N,\ldots, 2^N=J_1$. 
%%%%%%%%
\begin{comment}
{\color{blue}In more detail, \eqref{eq:rho-0-j} can be rewritten as follows:
\begin{equation*}
\rho^0_j =  
		\begin{cases}
		   \frac{1}{\DX}\int_{x_{j-1/2}}^{x_{j+1/2}}\rho_0(x)dx\,, & j\neq m(0), m(0)\pm 1,\\[2mm]
		    \frac{1}{\xi^0-x_{m(0)-3/2} }\int_{x_{m(0)-3/2}}^{\xi^0}\rho_0(x)dx, & j= m(0)- 1,\\[2mm]         
		    \frac{1}{x_{m(0)+3/2} - \xi^0}\int_{\xi^0}^{x_{m(0)+3/2}}\rho_0(x)dx, & j= m(0)+ 1.
	\end{cases}
\end{equation*}}%
\end{comment}
%%%%%%%%
%For $t\in (0,\DT)$, the approximate solution $\rho^\DX$ is defined by means of 

\boxed{\textbf{Iteration step:}} Assume that the $\rho^n_j$ are known for some $n\ge 0$ and $|j|\le 2^N$, $j\not = m(n)$, 
and that \eqref{eq:boundary-values-0} holds.
%As for $n=0$, the extreme values are set to be zero:
%\begin{equation}\label{eq:boundary-values-0}
%    \rho^n_{\pm(2^N+1)}=0
%,\qquad \forall\, n\ge 0   \,.
%\end{equation}

Having set $\rho^\DX(x,t^n)$ as in \eqref{eq:rhoDX-at-tn} and recalling $\xi^\DT(t)$ from \eqref{def:xi-DT}, the solution $\rho^\DX$ is prolonged for $t\in (t^n,t^{n+1})$ 
%as follows: it is defined 
as the \textit{exact} solution of the problem 
\begin{equation}\label{eq:main-approx}
	\begin{cases}
		\pt\rho + \px \left\{\sign(x-\xi^\DT(t))f(\rho)\right\} = 0,\\
		\rho(x,t^n) = \rho^\DX(x,t^n) 
		%\,,\\
		%\rho(t,\pm1 \pm 2^{-N-1}) = 0\,.
	\end{cases}
\end{equation}
on the set $\left(-\frac {3} 2\DX -1,1+\frac {3} 2 \DX\right)$.

Notice that, thanks to \eqref{eq:boundary-values-0}, the following condition is satisfied:
\begin{equation}\label{eq:bc}
    \rho^\DX(t,\pm(1+\DX/2))\in [0,{{\blue \argmax}}_{[0,1]} f]
\end{equation}
and this will serve as the boundary condition for the original problem. See \cite{DeboraGoatinRosini2014,El-Khatib2013Weak}. %\hl{(...)}
%{\sc[check that the above property is satisfied at the two boundaries]} 
%after applying \eqref{eq:main-approx} to $\rho^\DX(\cdot,0)$ defined as in \eqref{eq:rhoDX-at-tn}, 
%the solution $\rho^\DX(x,t)$ is defined up to time $t=t^{n+1}-$. We 

Now, we define the values
$$
\rho^{n+1}_j\,,\qquad j\in\{-\frac1\DX,\ldots, \frac1\DX\}\setminus\{m(n+1)\}
$$
in the Cases {\bf A}, {\bf B} and {\bf C}, as follows. 

\medskip
\boxed{\textbf{Case A:}\ m(n)=m(n+1)} \quad 
Here, with the notation of \eqref{def:cell-size}, we set
%the natural definition for $\rho_j^{n+1}$ is 
\begin{equation}\label{eq:rho-j-n+1}
    \rho^{n+1}_j := \frac 1 {\DX^{n+1}_j}%{|\CCC^{n+1}_j|}
    \int_{\CCC^{n+1}_j} \rho^\DX(x,t^{n+1}-)\, dx\,,\qquad j\not = m(n+1)\,.
\end{equation}
In particular, we have the following values 
\begin{itemize}
    \item For $|j-m(n)|\ge 2$
    %$j\le m(n)-2$ and $j\ge m(n)+2$, 
    the boundary of the cells are vertical and the $\rho_j^{n+1}$ are defined in a standard way: 
    for $-1/\DX \le j\le m(n+1)- 2$,
%the standard definition of the Godunov scheme:
\begin{equation}\label{eq:standard-marching-minus}
   \rho^{n+1}_j = \rho^n_j - \frac{\DT}{\DX} \cdot
        \big[h^-(\rho^n_j,\rho^n_{j+1})- h^-(\rho^n_{j-1}, \rho^n_{j})\big]
\end{equation}
while for $m(n+1)+ 2 \le j \le 1/\DX$:
\begin{equation}\label{eq:standard-marching-plus}
      \rho^{n+1}_j = \rho^n_j - \frac{\DT}{\DX}\cdot 
               \big[h^+(\rho^n_j,\rho^n_{j+1})- h^+(\rho^n_{j-1}, \rho^n_{j})\big] \,.
\end{equation}
 Notice that for the extreme values of the indexes $j = \pm 1/\DX$ we use the extended values \eqref{eq:boundary-values-0}; for instance, if $j=1/\DX$ then \eqref{eq:standard-marching-plus} reduces to
\begin{equation*}
  \rho^{n+1}_{1/\DX} = \rho^n_{1/\DX}  - \frac{\DT}{\DX}\cdot 
               \big[h^+(\rho^n_{1/\DX},0)- h^+(\rho^n_{{1/\DX} -1}, \rho^n_{{1/\DX} })\big]\,.
\end{equation*}
%
%
\begin{comment}
\begin{equation}\label{eq:standard-marching-plus}
    \rho^{n+1}_j = \rho^n_j - \frac{\DT}{\DX}\cdot \begin{cases}
        \big[h^-(\rho^n_j,\rho^n_{j+1})- h^-(\rho^n_{j-1}, \rho^n_{j})\big]\,, & -2^N\le j\le m(n+1)- 2,\\[2mm]
        \big[h^+(\rho^n_j,\rho^n_{j+1})- h^+(\rho^n_{j-1}, \rho^n_{j})\big]
    \,, &  m(n+1)+ 2 \le j \le 2^N \,.
    \end{cases}
\end{equation}

\begin{subequations}
\label{eq:standard-marching}
\begin{equation}
    \label{eq:standard-marching-left}
    \rho^{n+1}_j = \rho^n_j - \frac{\DT}{\DX} \big[h^-(\rho^n_j,\rho^n_{j+1})- h^-(\rho^n_{j-1}, \rho^n_{j})\big]\,, \qquad -2^N\le j\le m(n+1)- 2
\end{equation}
\begin{equation}
    \label{eq:standard-marching-right}
    \rho^{n+1}_j = \rho^n_j - \frac{\DT}{\DX} \big[h^+(\rho^n_j,\rho^n_{j+1})- h^+(\rho^n_{j-1}, \rho^n_{j})\big]
    \,, \qquad  m(n+1)+ 2 \le j \le 2^N    \,.
\end{equation}
\end{subequations}
\end{comment}

\medskip
\item For $j=m(n)-1$
we consider the trapezoid in the $(x,t)$-plane with corners 
$$
\left(x_{m(n)-3/2},t^n\right)%%%%=C_{m(n)-1}^n
\,,\qquad \left(\xi^n,t^n\right)\,,\qquad \left(\xi^{n+1},t^{n+1}\right)\,,\qquad \left(x_{m(n)-3/2},t^{n+1}\right)%%=C_{m(n+1)-1}^{n+1}
\,;
$$
It can be observed that, using the notation given in equation \eqref{def:DnLR}, the lower basis is $D^n_L$ and the upper basis is $C_{m(n)-1}^n = D^{n+1}_L$. 
%%$C_{m(n+1)-1}^{n+1}$.
% 
Then we integrate $\rho^\DX(x,t)$ over the trapezoid 
%the equation \eqref{eq:main-approx}$_1$ 
to get
\begin{align*}
    &\int_{D^{n+1}_L} \rho^\DX(x,t^{n+1}-)\,dx - \int_{D^n_L} \rho^\DX(x,t^{n}+)\,dx  \\
    &\qquad \qquad + ~
   \int_{t^n}^{t^{n+1}} h_0\left(\rho^n_L,\rho^n_R,\alpha_{n,\DT}\right)\, dt - 
    \int_{t^n}^{t^{n+1}} h^-\left(\rho^n_{m(n)-2},\rho^n_L\right)\, dt=0\,,
\end{align*}
that is
\begin{align}\label{eq:rho_n+1_L-A}
   &  \Delta^{n+1}_L 
   % |D^{n+1}_L| %C^{n+1}_{m(n+1)-1}| 
   \, \rho^{n+1}_L =
   \Delta^n_L %%|D^n_L| % C^{n}_{m(n)-1}| 
   \,\rho^{n}_L 
     ~-~ \DT \left[ h_0\left(\rho^n_L,\rho^n_R,\alpha_{n,\DT}\right) - 
 h^-\left(\rho^n_{m(n)-2},\rho^n_L\right)\right],
\end{align}
where $\Delta^n_L$ is defined at \eqref{def:cell-size}. By recalling \eqref{eq:lenght-cellsLR-A}, the formula here above can rewritten as
\begin{equation*}%\label{eq:rho_n+1_L-A-bis}
    \rho^{n+1}_L =
   \rho^{n}_L 
     ~-~ \frac\DT{\Delta^{n+1}_L } \left[ h_0\left(\rho^n_L,\rho^n_R,\alpha_{n,\DT}\right) - 
 h^-\left(\rho^n_{m(n)-2},\rho^n_L\right) + \alpha_{n,\DT}  \rho^{n}_L \right]\,.
\end{equation*}

%%%{def-DeltaLR} 
%and $\Delta^{n+1}_L = \Delta^n_L +\alpha_{n,\DT}\DT$.

\medskip
\item For $j=m(n)+1$, we proceed similarly as the previous case and define
\begin{align}\label{eq:rho_n+1_R-A}
\Delta^{n+1}_R \rho^{n+1}_R%{m(n+1)+1} 
&= {\Delta^n_R} 
  % \frac{|D^n_R|}{|D^{n+1}_R|} % C^{n}_{m(n)-1}| 
   ~\rho^{n}_{R}  -~ {\DT}
   %{\Delta^{n+1}_R%|D^{n+1}_R|} 
    \left[h^+\left(\rho^n_R%{m(n)+1}
    ,\rho^n_{m(n)+2}\right) - h_0\left(\rho^n_L,\rho^n_R,\alpha_{n,\DT}\right) 
 \right] , %  \nonumber
\end{align}
with $\Delta^{n+1}_R = \Delta^n_R - \alpha_{n,\DT}\DT$, see \eqref{eq:lenght-cellsLR-A}$_2$. Consequently, we obtain
%{\blue
\begin{equation*}%\label{eq:rho_n+1_R-A-bis}
    \rho^{n+1}_R =
   \rho^{n}_R 
     ~-~ \frac\DT{\Delta^{n+1}_R } \left[
     h^+\left(\rho^n_R%{m(n)+1}
    ,\rho^n_{m(n)+2}\right) - h_0\left(\rho^n_L,\rho^n_R,\alpha_{n,\DT}\right) - \alpha_{n,\DT}  \rho^{n}_R \right]\,.
\end{equation*}
%}
\end{itemize}

\begin{comment}
%%%%%
{\color{blue}We remark that
\begin{equation*}
    \frac 12 \le \frac{|D^n_L|}{|D^{n+1}_L|} \le 2\,,\qquad ...
    \le \frac{|\DT|}{|D^{n+1}_L|} \le ... \,,\qquad 
\end{equation*}
and similarly for ...}%%%%%
\end{comment}

\medskip
\boxed{\textbf{Case B:}\ m(n+1)=m(n)+1}\quad  
We focus on the region in the time interval $[t^n,t^{n+1}]$ which is delimited by the cell points
\begin{equation*}
    x_{m(n)-3/2}=  x_{m(n+1)-5/2}\,,\qquad   x_{m(n)+5/2}=  x_{m(n+1)+3/2}\,.
\end{equation*}
At time $t^n$ the region above corresponds to three cells and three corresponding values of the density
$$
D^n_L\,,\ \rho^n_L; %{m(n)-1};
\qquad D^n_R\,,\ \rho^n_R%{m(n)+1}
;\qquad 
\CCC^n_{m(n)+2}\,,\ \rho^n_{m(n)+2}\,.
$$
On the other hand, at time $t^{n+1}$ the cells and values in the region are:
\begin{align}\label{eq:def-rho-caseB}
\CCC^{n+1}_{m(n+1)-2}\,,~ \rho^{n+1}_{m(n+1)-2};
\qquad  D^{n+1}_L\,,~ \rho^{n+1}_L%{m(n+1)-1}
;\qquad D^{n+1}_R\,,~ \rho^{n+1}_R
%{m(n+1)+1}
\,.    
\end{align}
Now we set
\begin{equation}\label{def:left-values-caseB}
    \rho^{n+1}_{m(n+1)-2} =  \rho^{n+1}_L
    := \bar \rho,
\end{equation}
where the value $\bar \rho$ is defined by the integration on the trapezoid delimited by $x_{m(n)-3/2}=  x_{m(n+1)-5/2}$ and the segment connecting $(\xi^n,t^n)$ to $(\xi^{n+1},t^{n+1})$, as follows:
\begin{align}\label{eq:march_caseB_L}
  \left(\DX + \Delta^{n+1}_L %|D^{n+1}_L| 
\right) 
   ~\bar \rho ~=~
  \Delta^n_L % |D^n_L| % C^{n}_{m(n)-1}| 
   ~\rho^{n}_L %{m(n)-1} 
   %\\
    %&\qquad
    -~ \DT \left[ h_0\left(\rho^n_L,\rho^n_R%\rho^n_{m(n)-1},\rho^n_{m(n)+1}
    ,\alpha_{n,\DT}
    \right) - 
 h^-\left(\rho^n_{m(n)-2},\rho^n_L% {m(n)-1}
 \right)\right]\,.
\end{align}
By using the identity \eqref{caseB-id_DX}$_1$, we find that
\begin{equation*}
  \rho^{n+1}_L = \rho^{n}_L - \frac{\DT}{\DX + \Delta^{n+1}_L}  
        \left[ h_0\left(\rho^n_L,\rho^n_R%\rho^n_{m(n)-1},\rho^n_{m(n)+1}
    ,\alpha_{n,\DT}
    \right) - 
 h^-\left(\rho^n_{m(n)-2},\rho^n_L% {m(n)-1}
 \right) + \alpha_{n,\DT}\rho^{n}_L\right]\,.
\end{equation*}

For the definition of $\rho^{n+1}_R$, we proceed similarly by integration on the trapezoid on the right of the segment connecting 
$(\xi^n,t^n)$ to $(\xi^{n+1},t^{n+1})$: we obtain that it is defined by the identity
\begin{align}
%|D^{n+1}_R|~ 
\Delta^{n+1}_R \rho^{n+1}_R %{m(n+1)+1} 
= &
   \Delta^{n}_R%{|D^n_R|} % C^{n}_{m(n)-1}| 
   ~\rho^{n}_R%{m(n)+1} 
   + \DX  ~\rho^{n}_{m(n)+2}
   -~ {\DT} 
    \left[h^+\left(\rho^n_{m(n)+2},\rho^n_{m(n)+3}\right) - h_0\left(\rho^n_L%{m(n)-1}
    ,\rho^n_R %{m(n)+1}
    ,\alpha_{n,\DT}\right) 
 \right]\,.   \label{eq:rho_n+1_R-B} 
\end{align}
By means of the identity \eqref{caseB-id_DX}$_2$ we rewrite \eqref{eq:rho_n+1_R-B} as follows:
\begin{align*}
      \rho^{n+1}_R &  = \frac{\Delta^{n+1}_R -\DX}
      %{\Delta^{n}_R - \alpha_{n,\DT} \DT }
      {\Delta^{n+1}_R }\rho^{n}_R + \frac{\DX}{\Delta^{n+1}_R }
   ~\rho^{n}_{m(n)+2}\\
   & \qquad  - \frac{\DT}{\Delta^{n+1}_R}  
        \left[
        h^+\left(\rho^n_{m(n)+2},\rho^n_{m(n)+3}\right) - h_0\left(\rho^n_L%{m(n)-1}
    ,\rho^n_R %{m(n)+1}
    ,\alpha_{n,\DT}\right) - \alpha_{n,\DT}\rho^{n}_R\right]\,.
\end{align*}
This completes the definition of the three values in \eqref{eq:def-rho-caseB}, that is, for 
$\rho^{n+1}_j$  with $j=m(n+1)-2$ and $j=m(n+1)\pm 1$.

On the other hand, the values $\rho^{n+1}_j$ for $j\le m(n+1)-3$ and $j\ge m(n+1)+2$ are given by the standard formulas \eqref{eq:standard-marching-minus}, \eqref{eq:standard-marching-plus} respectively. %\eqref{eq:standard-marching-right}. 

This completes 
the definition of the values $\rho^{n+1}_j$ with $j\not = m(n+1)$ %is therefore complete 
for Case B.

\medskip
\boxed{\textbf{Case C:}\ m(n+1)=m(n)-1}\quad 
%Here $m(n+1)=m(n)-1$. 
In this case, we focus on the region in the time interval $[t^n,t^{n+1}]$, which is delimited by the cell points
\begin{equation*}
    x_{m(n)-5/2}=  x_{m(n+1)-3/2}\,,\qquad   x_{m(n)+3/2}=  x_{m(n+1)+5/2}\,.
\end{equation*}
At time $t^n$, the region above corresponds to three cells and three corresponding values:
$$
\CCC^n_{m(n)-2}\,,\ \rho^n_{m(n)-2}\, ;\qquad
D^n_L\,,\ \rho^n_L \,; %{m(n)-1}
\qquad D^n_R\,,\ \rho^n_R \,. %{m(n)+1}
$$
On the other hand, at time $t^{n+1}$ the cells and values in the region are:
\begin{align}\label{eq:def-rho-caseC}
 D^{n+1}_L\,,~ \rho^{n+1}_L%{m(n+1)-1}
;\qquad D^{n+1}_R\,,~ \rho^{n+1}_R \,;
%{m(n+1)+1} 
\qquad \CCC^{n+1}_{m(n+1)+2}\,,~ \rho^{n+1}_{m(n+1)+2}\,.    
\end{align}
The marching formulas which allow us to compute the values of $\rho^{n+1}_L$, $\rho^{n+1}_R$, $\rho^{n+1}_{m(n+1)+2}$, are mirror images of the ones in \eqref{def:left-values-caseB}--\eqref{eq:rho_n+1_R-B}, as the only difference between the two cases is the sign of $\alpha_{n,\DT}$ (positive in Case \textbf{B}, negative in Case \textbf{C}). For the reader's convenience, we write them explicitly.  
\begin{align} \label{eq:rho_n+1_L-C} 
\Delta^{n+1}_L \rho^{n+1}_L 
= &
   \Delta^{n}_L
   ~\rho^{n}_L
   + \DX  ~\rho^{n}_{m(n)-2}
   -~ {\DT} 
    \left[h_0\left(\rho^n_L
    ,\rho^n_R
    ,\alpha_{n,\DT}\right) -h^-\left( \rho^n_{m(n)-3},\rho^n_{m(n)-2}\right) 
 \right]\,.   
\end{align}
Now, we set
\begin{equation}\label{def:right-values-caseC}
     \rho^{n+1}_R = \rho^{n+1}_{m(n+1)+2} 
    := \bar \rho,
\end{equation}
where the value $\bar \rho$ is defined by
\begin{align*}
\left(\DX + \Delta^{n+1}_R 
\right) 
   ~\bar \rho ~=~
  \Delta^n_R 
   ~\rho^{n}_R 
    -~ \DT \left[
 h^+\left(\rho^n_R, \rho^n_{m(n)+2},
 \right)-h_0\left(\rho^n_L,\rho^n_R,\alpha_{n,\DT} \right)  \right]\,.
\end{align*}
%We remark for future use that 
%\[
%\DX + \Delta^{n+1}_R = \Delta^n_R -\alpha_{n,\DT} \DT.
%\]
 The values $\rho^{n+1}_j$ for $j\le m(n+1)-2$ and $j\ge m(n+1)+3$ are given by the standard formulas \eqref{eq:standard-marching-minus}, \eqref{eq:standard-marching-plus} respectively. 

This completes the definition of the values $\rho^{n+1}_j$ with $j\not = m(n+1)$ for Case \textbf{C}.

%==================================================================================================
\section{Analysis of the scheme} \label{sec:AnalysisOfTheScheme} 
In this section, we analyze the scheme and prove relevant properties that provide a strong basis to establish its convergence to the entropy solution defined in Section \ref{sec:ProblemSettings}. For convenience, we rewrite the equation \eqref{eq:main} here as:
\begin{equation}\label{eq:main_S4}
	\begin{cases}
		\pt\rho + \px \left\{\sign(x-\xi(t))f(\rho)\right\} = 0,\\
		\rho(\cdot,0) = \rho_0 \,,\\
		\rho(t,\pm1)\in [0,{{\blue \argmax}}_{[0,1]} f]%\rho(t,\pm1) = 0
	\end{cases}
\end{equation}
with $\xi(t)$ satisfying \eqref{eq:hyp-on-xi}.

Let's start by introducing a convenient expression of the scheme that will be useful during the analysis.
\medskip
%In conclusion of \textbf{Case A}, 
We define
\begin{align}%\nonumber
    H^\pm(a,b,c;\DT,\DX) &= b -
    \frac{\DT}{\DX} \big[h^\pm(b,c)- h^\pm(a, b)\big]\,,\label{Hpm}\\[2mm]
    H^{AB}_L (a,b,c;\DT,\Delta_L, \alpha) &= \frac{\Delta_L}{\Delta_L + \alpha\DT} b - \frac{\DT}{\Delta_L + \alpha\DT} 
     \left(h_0\left(b,c,\alpha\right) - 
 h^-\left(a,b\right)\right)%\,, 
 \label{HA_L}
   %   \\[2mm] \nonumber
%{\blue  H^{AB}_L (a,b,c;\DT,\Delta, \alpha) }  &=  %{\blue b - \frac{\DT}{\Delta} 
%     \left(h_0\left(b,c,\alpha\right) - 
% h^-\left(a,b\right)+ \alpha b\right)}
      \\[2mm] 
     \label{HA_R}
      H^{AC}_R (a,b,c;\DT,\Delta_R, \alpha) &= 
      \frac{\Delta_R}{\Delta_R - \alpha\DT} b - \frac{\DT}{\Delta_R - \alpha\DT} 
     \left( 
 h^+\left(b,c\right)- h_0\left(a,b,\alpha\right) \right)\,,
 %\\%\nonumber
%{\blue  H^{AC}_R (a,b,c;\DT,\Delta, \alpha) }  &=  %{\blue b - \frac{\DT}{\Delta} 
 %    \left(
%   h^+\left(b,c\right) - h_0\left(a,b,\alpha\right) %- \alpha b\right)}
\end{align}
 and
 \begin{align}
 \label{HC_L}
      H^C_L (a,b,c, d;\DT,\DX, \Delta_L, \alpha) &= 
      \frac{\Delta_L}{\Delta_L + \DX + \alpha\DT} b + \frac{\DX}{\Delta_L + \DX + \alpha\DT} c \\[2mm]\nonumber
      &\qquad - \frac{\DT}{\Delta_L + \DX +\alpha\DT} 
     \left( 
 h_0\left(c,d,\alpha\right) -h^-\left(a,b\right)\right)\,,
     \\[2mm] \label{HB_R}
      H^B_R (a,b,c, d;\DT,\DX, \Delta_R, \alpha) &= 
      \frac{\Delta_R}{\Delta_R + \DX - \alpha\DT} b + \frac{\DX}{\Delta_R + \DX - \alpha\DT} c \\[2mm]\nonumber
      &\qquad - \frac{\DT}{\Delta_R + \DX - \alpha\DT} 
     \left( 
 h^+\left(c,d\right)- h_0\left(a,b,\alpha\right) \right)\,.
 %\\[3mm] \nonumber
%{\blue H^B_R (a,b,c, d;\DT,\DX, \Delta, \alpha) }&= 
     % \frac{\Delta-\DX}{\Delta} b + \frac{\DX}{\Delta} c 
%          \frac{(\Delta-\DX) b + \DX c}{\Delta}
 %     - \frac{\DT}{\Delta} 
%  \left( h^+\left(c,d\right)- h_0\left(a,b,\alpha\right) -\alpha b \right)
\end{align}
Then the expressions \eqref{eq:standard-marching-minus}--\eqref{eq:rho_n+1_R-A}, Case \textbf{A}, can be rewritten as

\begin{equation}\label{eq:Hpm}
    \rho^{n+1}_j = \begin{cases} H^-(\rho^n_{j-1},\rho^n_{j},\rho^n_{j+1};\DT,\DX), & - 1/\DX %2^N
    \le j\le m(n+1)- 2,\\[2mm]
%    H^A_L(\rho^n_{m(n)-2},\rho^n_L,\rho^n_R; \DT, \Delta^n_L,\alpha_{n,\DT}) & j=m(n+1)-1 \\[2mm]
%    H^A_R(\rho^n_L,\rho^n_R, \rho^n_{m(n)+2}; \DT, \Delta^n_R,\alpha_{n,\DT}) & j=m(n+1)+1 \\[2mm]
    H^+(\rho^n_{j-1},\rho^n_{j},\rho^n_{j+1};\DT,\DX), & m(n+1) + 2 \le j\le 1/\DX%2^N
    \,,
    \end{cases}
\end{equation}
and 
\begin{equation}\label{eq:HA_LR}
    \rho^{n+1}_j = \begin{cases} %H^-(\rho^n_{j-1},\rho^n_{j},\rho^n_{j+1};\DT,\DX) & -2^N\le j\le m(n+1)- 2\\[2mm]
    H^{AB}_L(\rho^n_{m(n)-2},\rho^n_L,\rho^n_R; \DT, \Delta^n_L,\alpha_{n,\DT}), & j=m(n+1)-1, \\[2mm]
    H^{AC}_R(\rho^n_L,\rho^n_R, \rho^n_{m(n)+2}; \DT, \Delta^n_R,\alpha_{n,\DT}), & j=m(n+1)+1\,. %\\[2mm]
%    H^+(\rho^n_{j-1},\rho^n_{j},\rho^n_{j+1};\DT,\DX) & m(n+1) + 2 \le j\le 2^N\,.
    \end{cases}
\end{equation}
%{\blue Notice:} $ H^{AB}_L(\rho^n_{m(n)-2},\rho^n_L,\rho^n_R; \DT, {\color{red}{\Delta^{n+1}_L}},\alpha_{n,\DT})$, $ H^{AC}_L
%(\rho^n_L,\rho^n_R, \rho^n_{m(n)+2}; \DT, {\color{red}\Delta^{n+1}_R},\alpha_{n,\DT})$

The corresponding expressions for the Case \textbf{B} write as 
\begin{equation}\label{eq:HpmB}
    \rho^{n+1}_j = \begin{cases} H^-(\rho^n_{j-1},\rho^n_{j},\rho^n_{j+1};\DT,\DX), & - \frac1\DX %2^N
    \le j\le m(n+1)- 3,\\[2mm]
%    H^A_L(\rho^n_{m(n)-2},\rho^n_L,\rho^n_R; \DT, \Delta^n_L,\alpha_{n,\DT}) & j=m(n+1)-1 \\[2mm]
%    H^A_R(\rho^n_L,\rho^n_R, \rho^n_{m(n)+2}; \DT, \Delta^n_R,\alpha_{n,\DT}) & j=m(n+1)+1 \\[2mm]
    H^+(\rho^n_{j-1},\rho^n_{j},\rho^n_{j+1};\DT,\DX), & m(n+1) + 2 \le j\le \frac1\DX \,,
    \end{cases}
\end{equation}
and 
\begin{equation}\label{eq:HA_LRB}
    \rho^{n+1}_j = \begin{cases} %H^-(\rho^n_{j-1},\rho^n_{j},\rho^n_{j+1};\DT,\DX) & -2^N\le j\le m(n+1)- 2\\[2mm]
    H^{AB}_L(\rho^n_{m(n)-2},\rho^n_L,\rho^n_R; \DT, \Delta^n_L,\alpha_{n,\DT}), & j=m(n+1)-2,\\ 
    &\qquad m(n+1)-1, \\[2mm]
    H^B_R(\rho^n_L,\rho^n_R, \rho^n_{m(n)+2},\rho^n_{m(n)+3}; \DT, \DX, \Delta^n_R,\alpha_{n,\DT}), & j=m(n+1)+1\,. %\\[2mm]
%    H^+(\rho^n_{j-1},\rho^n_{j},\rho^n_{j+1};\DT,\DX) & m(n+1) + 2 \le j\le 2^N\,.
    \end{cases}
\end{equation}
Finally, for the Case \textbf{C} we have 
\begin{equation}\label{eq:HpmC}
    \rho^{n+1}_j = \begin{cases} H^-(\rho^n_{j-1},\rho^n_{j},\rho^n_{j+1};\DT,\DX), & -2^N\le j\le m(n+1)- 2,\\[2mm]
    H^+(\rho^n_{j-1},\rho^n_{j},\rho^n_{j+1};\DT,\DX), & m(n+1) + 3 \le j\le 2^N\,,
    \end{cases}
\end{equation}
and 
\begin{equation}\label{eq:HA_LRC}
    \rho^{n+1}_j = \begin{cases} 
    H^{C}_L(\rho^n_{m(n)-3},\rho^n_{m(n)-2},\rho^n_L,\rho^n_R; \DT, \DX, \Delta^n_L,\alpha_{n,\DT}), &j= m(n+1)-1, \\[2mm]
    H^{AC}_R(\rho^n_L,\rho^n_R, \rho^n_{m(n)+2}; \DT, \Delta^n_R,\alpha_{n,\DT}),  &j=m(n+1)+1,\\
    &\qquad m(n+1)+2\,.
    \end{cases}
\end{equation}

\begin{comment}
 
{\color{blue} Therefore 
\begin{align}
%\rho^{n+1}_{m(n+1)-1} 
\rho^{n+1}_{L} 
&= \frac{\Delta^n_L}{\Delta^{n+1}_L}
   %\frac{|D^n_L|}{|D^{n+1}_L|}
   ~\rho^{n}_L%{m(n)-1} 
   ~-~ \frac{\DT}{\Delta^{n+1}_L} 
    \left[ h_0\left(\rho^n_L,\rho^n_R,\alpha_{n,\DT}\right) - 
 h^-\left(\rho^n_{m(n)-2},\rho^n_L\right)\right]\,.    %\nonumber
\end{align}
}
\end{comment}

In the following lemma, we establish some monotonicity properties of the maps defined at \eqref{Hpm}--\eqref{HB_R}.
\begin{lemma}
\label{lem:properties-of-H}
%(\textbf{About consistency, monotonicity and well balance of the fluxes}) 
\begin{comment}
    Assume that 
\begin{equation}\label{eq:cfl-on-param}
\alpha \DT \le \DX/2
\end{equation}
and that
\begin{equation}\label{eq:cond-on-param}
\DX\leq\Delta_L\,,\ \Delta_R\leq 2 \DX\,.
\end{equation}
\end{comment}
%{\blue {\sc [assumptions on $\DT$, $\DX ...$]}} 
Given that the CFL condition \eqref{eq:CFL-cond} is satisfied, the functions $H^\pm$, $H^{AB}_L$ and $H^{AC}_R$ %defined at \eqref{Hpm}--\eqref{HA_R}, 
are nondecreasing with respect to their first three arguments $a,b,c$. 

The functions $H^C_L$ and $H^B_R$ 
%defined at \eqref{HB_R}--\eqref{HC_L},  
are nondecreasing with respect of their first four arguments $a,b,c, d$. 
\end{lemma}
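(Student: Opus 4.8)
The plan is to show monotonicity of each marching map by computing partial derivatives and bounding them using (a) the monotonicity of the Godunov fluxes $h^\pm$ and $h_0$ from Lemma~\ref{rem:mon-fluxes}(ii), and (b) the CFL condition \eqref{eq:CFL-cond}. The key structural observation is that every map in \eqref{Hpm}--\eqref{HB_R} has the same shape: a convex-type combination of the old cell values with coefficients that are nonnegative under the CFL condition, minus a flux difference term. So the proof reduces to verifying nonnegativity of each partial derivative.

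Let me sketch the argument.

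First I would treat $H^\pm$, which is the standard Godunov case. Writing
\[
H^-(a,b,c) = b - \frac{\DT}{\DX}\bigl[h^-(b,c)-h^-(a,b)\bigr],
\]
the derivative with respect to $a$ is $\frac{\DT}{\DX}\,\partial_a h^-(a,b)\ge 0$ by Lemma~\ref{rem:mon-fluxes}(ii), and similarly $\partial_c H^- = -\frac{\DT}{\DX}\,\partial_c h^-(b,c)\ge 0$. The middle derivative
\[
\partial_b H^- = 1 - \frac{\DT}{\DX}\bigl[\partial_b h^-(b,c) - \partial_b h^-(a,b)\bigr]
\]
must be shown nonnegative: since $\lip(h^\pm)\le\|f'\|_\infty$ by \eqref{Lip-num-fluxes}, each of the two terms in the bracket is bounded in absolute value by $\|f'\|_\infty$, so the bracket is at most $2\|f'\|_\infty$ in magnitude, and the CFL condition \eqref{eq:CFL-cond} gives $\frac{\DT}{\DX}\cdot 2\|f'\|_\infty \le 1$. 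The case $H^+$ is identical with $h^+$ in place of $h^-$.

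For the interface maps I would proceed analogously but must track the variable cell widths. Consider $H^{AB}_L(a,b,c;\DT,\Delta_L,\alpha)$ as defined in \eqref{HA_L}. The coefficient $\frac{\Delta_L}{\Delta_L+\alpha\DT}$ and the factor $\frac{\DT}{\Delta_L+\alpha\DT}$ are well-defined and positive because the denominator equals $\Delta_L^{n+1}$ by \eqref{eq:lenght-cellsLR-A}, which is bounded below by $\DX>0$; here I use $\Delta_L+\alpha\DT=\Delta_L^{n+1}\ge\DX$ and $\Delta_L\le 2\DX$. Then $\partial_a H^{AB}_L = \frac{\DT}{\Delta_L+\alpha\DT}\,\partial_a h^-(a,b)\ge 0$ and $\partial_c H^{AB}_L = -\frac{\DT}{\Delta_L+\alpha\DT}\,\partial_c h_0(b,c,\alpha)\ge 0$, again from Lemma~\ref{rem:mon-fluxes}(ii). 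The middle derivative is
\[
\partial_b H^{AB}_L = \frac{\Delta_L}{\Delta_L+\alpha\DT} - \frac{\DT}{\Delta_L+\alpha\DT}\bigl[\partial_b h_0(b,c,\alpha) - \partial_b h^-(a,b)\bigr],
\]
whose bracket is bounded by $\lip(h_0)+\lip(h^-)\le (\|f'\|_\infty+|\alpha|)+\|f'\|_\infty$ by \eqref{Lip-num-fluxes}. Since $|\alpha|\le\|\xi'\|_\infty$ and both $\|f'\|_\infty,\|\xi'\|_\infty$ are controlled by $\frac{\DX}{2\DT}$ through \eqref{eq:CFL-cond}, the subtracted term is at most $\frac{\DT}{\Delta_L}(2\|f'\|_\infty+|\alpha|)$, which I would compare against $\frac{\Delta_L}{\Delta_L+\alpha\DT}$ using $\Delta_L\ge\DX$ to conclude nonnegativity. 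The maps $H^{AC}_R$, $H^C_L$, $H^B_R$ are handled the same way; for the four-argument maps $H^C_L$ and $H^B_R$ the extra argument enters only through a flux of $h_0$ or $h^\pm$ whose sign is fixed by Lemma~\ref{rem:mon-fluxes}(ii), and the two ``interior'' derivatives (with respect to $b$ and $c$) are the ones requiring the CFL bound.

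\textbf{The main obstacle} is the middle-derivative estimate for the interface maps: unlike the standard case where the denominator is exactly $\DX$, here it is $\Delta_L^{n+1}$ (or $\Delta_R^{n+1}$, or $\Delta_L+\DX+\alpha\DT$), so the clean cancellation $\frac{\DT}{\DX}\cdot 2\|f'\|_\infty\le 1$ no longer holds verbatim and I must exploit the two-sided bounds $\DX\le\Delta_L^{n+1}\le 2\DX$ from \eqref{caseB-id_DX}, \eqref{caseC-id_DX} together with the fact that the leading coefficient $\tfrac{\Delta_L}{\Delta_L+\alpha\DT}$ is itself of order one. The bookkeeping of which geometric quantity bounds which denominator, and confirming that the CFL margin of $\frac12$ in \eqref{eq:CFL-cond} is exactly enough to absorb the combined Lipschitz constant $2\|f'\|_\infty+|\alpha|$ on the larger cells, is the delicate part of the proof.
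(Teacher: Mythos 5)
Your overall strategy coincides with the paper's: both proofs exploit Lipschitz continuity to compute a.e.\ partial derivatives, use the monotonicity of $h^\pm$ and $h_0$ from Lemma~\ref{rem:mon-fluxes} for the outer arguments, and invoke the CFL condition \eqref{eq:CFL-cond} together with $\DX\le\Delta_L,\Delta_R\le 2\DX$ for the middle ones. Your treatment of $H^\pm$ and of the outer derivatives of the interface maps is correct. However, there is a genuine gap in the middle-derivative estimate for $H^{AB}_L$ (and, symmetrically, $H^{AC}_R$) — exactly the step you flag as delicate. You bound the bracket $\partial_b h_0(b,c,\alpha)-\partial_b h^-(a,b)$ by $\lip(h_0)+\lip(h^-)\le 2\|f'\|_\infty+|\alpha|$, so that under \eqref{eq:CFL-cond} the subtracted term is only controlled by $\DT\left(2\|f'\|_\infty+|\alpha|\right)\le\tfrac32\DX$, which must be compared with $\Delta_L\ge\DX$. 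Since $\Delta_L$ can equal $\DX$, the required inequality $\Delta_L\ge\DT\left(2\|f'\|_\infty+|\alpha|\right)$ is not guaranteed, and your claim that the CFL margin of $\tfrac12$ "is exactly enough to absorb the combined Lipschitz constant $2\|f'\|_\infty+|\alpha|$" is arithmetically false in this worst case.

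The repair requires the structure of the fluxes, not just their Lipschitz constants. From \eqref{h0-bis} one computes $\partial_b h_0(b,c,\alpha)=\left(-f'(b)-\alpha\right)\mathbbm{1}_{\{\alpha<-v(b)\}}$, while $-\partial_b h^-(a,b)=\partial_b h^+(b,a)\le [f'(b)]_+$; when $f'(b)<0$ the second contribution vanishes, and when $f'(b)\ge 0$ the first is at most $|\alpha|$. Hence the bracket is bounded by $\|f'\|_\infty+|\alpha|$, not $2\|f'\|_\infty+|\alpha|$, and then $\DT\left(\|f'\|_\infty+|\alpha|\right)\le\DX\le\Delta_L$ closes the estimate. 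Note that the paper only details $H^B_R$, in which every argument enters exactly one numerical flux (the internal flux between the two merged cells cancels), so the single-flux Lipschitz bound suffices there; the maps $H^{AB}_L$ and $H^{AC}_R$, whose middle argument enters two fluxes, are precisely the cases your proposal leaves unproved.
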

\begin{proof} The functions $H^\pm$, $H^{AB}_L$, $H^{AC}_R$, $H^B_R$, and $H^C_L$  are Lipschitz continuous; therefore, we can check their monotonicity properties by computing a.e. the partial derivatives. To keep our presentation as light as possible, we provide detailed estimates only for $H^B_R$, see \eqref{HB_R}, as the other cases are very similar. 

Thanks to Lemma~\ref{rem:mon-fluxes}, the CFL condition \eqref{eq:CFL-cond} and the fact that $\DX\leq\Delta_R\leq 2 \DX$, we have that 
\begin{align*}
    \partial_a H^B_R (a,b,c,d;\DT,\DX, \Delta_R, \alpha) 
    & =  \frac{\DT}{\Delta_R + \DX - \alpha\DT} 
     \partial_a h_0\left(a,b,\alpha\right)\geq 0 \,,\\[2mm] 
    \partial_b H^B_R (a,b,c,d;\DT,\DX, \Delta_R, \alpha) 
    & = \frac{\Delta_R}{\Delta_R + \DX - \alpha\DT} - \frac{\DT}{\Delta_R + \DX - \alpha\DT} 
    \vert\partial_b h_0\left(a,b,\alpha\right)\vert\\
    &\geq \frac{1}{2}\frac{2\Delta_R - \DX}{\Delta_R + \DX - \alpha\DT}>0 \,,\\[2mm] 
    \partial_c H^B_R (a,b,c,d;\DT,\DX, \Delta_R, \alpha) & = \frac{\DX}{\Delta_R + \DX - \alpha\DT} - \frac{\DT}{\Delta_R + \DX - \alpha\DT} 
    \partial_c h^+\left(c,d\right)\\
    &\geq 
    \frac{1}{2}\frac{\DX}{\Delta_R + \DX - \alpha\DT}>0 \,,\\[2mm] 
    \partial_d H^B_R (a,b,c,d;\DT,\DX, \Delta_R, \alpha)  & = \frac{\DT}{\Delta_R + \DX - \alpha\DT} 
     \vert\partial_d h^+\left(c,d\right)\vert\geq 0 \,. %\\[2mm] 
\end{align*}
\end{proof}
%{\blue {\sc commento:} andrebbe scritto da qualche parte, ad es. nel Lemma 3.2, che le costanti di Lipschitz di $h^\pm$ sono date da $f'$ e dire l'analogo per $h_0$}
%With reference to Lemma \ref{rem:mon-fluxes}, equation \eqref{eq:h0cc},  

For future use, we define the remainder terms
    \begin{equation}
 \RRR_L(b,\alpha) = \begin{cases}
        0, \quad & \textrm{ if }\alpha\le -v(b) \,,\\
        f(b)+\alpha b, \quad & \textrm{ if } \alpha\in (-v(b),v(b)) \,,\\
        2f(b), \quad & \textrm{ if }\alpha\ge v(b) \,,
    \end{cases}        
    \end{equation}%
and  $\RRR_R(b,\alpha) := 2f(b)-\RRR_L(b,\alpha) $. Observe that the remainder terms are always non negative. % 
%{\blue, and that from Lemma~\ref{rem:mon-fluxes} we obtain the identity
%\begin{equation}\label{identity-RRR}
% h_0(b,b,\alpha) - h^-(b,b) = -\alpha b + \RRR_L(b,\alpha)\,.
%\end{equation}}

Let us now collect other useful properties of the maps defined at \eqref{Hpm}--\eqref{HB_R}.

\begin{lemma}
For any $b\in[0,1]$ we have 
\begin{equation}
  H^\pm(b,b,b;\DT,\DX) = b\,,  
\end{equation}
\begin{align}\label{restoL}
    & H^{AB}_L(b,b,b;\DT,\DX,\alpha)=b - \frac{\DT}{\Delta_L+\alpha \DT} \RRR_L(b,\alpha) {\blue \le b}, \\ 
    \nonumber %\label{restoL-2}
    &H^{C}_L(b,b,b,b;\DT,\DX, \Delta_L,\alpha)=b - \frac{\DT}{\Delta_L+\DX +\alpha \DT } \RRR_L(b,\alpha),
    \end{align}
    \begin{align}\label{eq:restoR}
     &H^{AC}_R(b,b,b;\DT,\DX,\alpha) = b - \frac{\DT}{\Delta_R-\alpha \DT} \RRR_R(b,\alpha),\\
     & H^{B}_R(b,b,b,b;\DT,\DX, \Delta_R,\alpha) =b - \frac{\DT}{\Delta_R+\DX-\alpha \DT} \RRR_R(b,\alpha).
\end{align}
Moreover, if $b$ and $c$ satisfy the Rankine-Hugoniot condition, namely  $f(b)+f(c) = \alpha (c-b)$, then 
\begin{align*}
   &H^{AB}_L (b,b,c;\DT,\Delta_L, \alpha) =  H^C_L (b,b,b,c;\DT,\DX, \Delta_L, \alpha) = b\,,
  %\label{HA_L}
     \\[2mm] %\label{HA_R}
    &H^{AC}_R (b,c,c;\DT,\Delta_R, \alpha) =H^B_R (b,c,c,c;\DT,\DX, \Delta_R, \alpha)= c\,.
\end{align*}
\end{lemma}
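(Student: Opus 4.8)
The plan is to verify every identity by direct substitution into the definitions \eqref{Hpm}--\eqref{HB_R}, drawing on the elementary flux identities collected in Lemma~\ref{rem:mon-fluxes}: $h^\pm(b,b)=\pm f(b)$ (item (i)), the explicit three-case value of $h_0(b,b,\alpha)$ (item (iv)), and the Rankine--Hugoniot evaluation of $h_0$ (item (v)). Positivity of all denominators is guaranteed by the CFL condition \eqref{eq:CFL-cond} together with the mesh bounds $\DX\le\Delta_L,\Delta_R\le 2\DX$: since $|\alpha|\DT\le\DX/2$, one gets $\Delta_L+\alpha\DT\ge\Delta_L-|\alpha|\DT\ge\DX-\DX/2>0$, and likewise for the other weights. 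The first claim $H^\pm(b,b,b)=b$ is immediate, because the bracketed flux difference $h^\pm(b,b)-h^\pm(b,b)$ vanishes.

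For the constant-state formulas I would substitute $h^-(b,b)=-f(b)$ and the expression for $h_0(b,b,\alpha)$ from Lemma~\ref{rem:mon-fluxes}(iv) into $H^{AB}_L(b,b,b)$, then rewrite the weight as $\tfrac{\Delta_L}{\Delta_L+\alpha\DT}=1-\tfrac{\alpha\DT}{\Delta_L+\alpha\DT}$ so as to isolate $b$ and collect the factor multiplying $-\tfrac{\DT}{\Delta_L+\alpha\DT}$. A short check over the three regimes $\alpha\ge v(b)$, $\alpha\le -v(b)$, and $-v(b)<\alpha<v(b)$ shows this factor is exactly $\RRR_L(b,\alpha)$; the computation for $H^C_L$ is identical save for the denominator $\Delta_L+\DX+\alpha\DT$. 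The two right maps are handled the same way using $h^+(b,b)=f(b)$, and the emerging remainder equals $2f(b)-\RRR_L(b,\alpha)=\RRR_R(b,\alpha)$. The inequality $H^{AB}_L(b,b,b)\le b$ then follows since $\RRR_L(b,\alpha)\ge 0$ (using $f\ge 0$ and, in the middle regime, $f(b)+\alpha b=b(v(b)+\alpha)\ge 0$) and the denominator is positive.

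For the Rankine--Hugoniot identities, Lemma~\ref{rem:mon-fluxes}(v) does the work: under $f(b)+f(c)=\alpha(c-b)$ it gives $h_0(b,c,\alpha)=f(c)-\alpha c=-f(b)-\alpha b$. Inserting this into the left maps yields $h_0(b,c,\alpha)-h^-(b,b)=-\alpha b$, so that
\begin{equation*}
H^{AB}_L(b,b,c)=\frac{\Delta_L\,b+\alpha\DT\,b}{\Delta_L+\alpha\DT}=b,
\end{equation*}
and the same cancellation with denominator $\Delta_L+\DX+\alpha\DT$ gives $H^C_L(b,b,b,c)=b$. For the right maps one uses instead $h^+(c,c)-h_0(b,c,\alpha)=\alpha c$, which collapses both $H^{AC}_R(b,c,c)$ and $H^B_R(b,c,c,c)$ to $c$ after the identical simplification.

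The main obstacle is purely bookkeeping: the three-regime case split in the constant-state formulas must be tracked carefully so that the collected coefficient matches the definition of $\RRR_L$, while every other identity reduces to a one-line cancellation once the values of $h^\pm$ and $h_0$ are substituted.
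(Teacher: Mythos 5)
Your proof is correct and follows essentially the same route as the paper: both reduce every identity to the values $h^\pm(b,b)=\pm f(b)$ and the three-case expression for $h_0(b,b,\alpha)$ from Lemma~\ref{rem:mon-fluxes}, after which each formula is a one-line cancellation of the $\alpha\DT\, b$ term hidden in the weights. Your explicit three-regime bookkeeping in fact lands on the correct identity $h^+(b,b)-h_0(b,b,\alpha)=\alpha b+\RRR_R(b,\alpha)$, whereas the corresponding display in the paper's proof carries a sign typo ($-\alpha b$ instead of $+\alpha b$) that does not affect the stated result.
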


\begin{proof}
These properties are direct consequences of Lemma~\ref{rem:mon-fluxes} and of the identities
\begin{equation*} %\label{identity-RRR}
h_0(b,b,\alpha) - h^-(b,b) = -\alpha b + \RRR_L(b,\alpha)\,,\quad h^+(b,b) - h_0(b,b,\alpha) = -\alpha b + \RRR_R(b,\alpha)\,.
\end{equation*}
\end{proof}

\begin{proposition}{\bf (Uniform $L^\infty$ bounds).}
Let $\rho_{\max}=|\rho_0\|_\infty$. For any $(t,x)\in[0,T]\times[-1,1]$, the numerical solution $\rho^\DX$ satisfies the bounds
 \begin{equation}\label{Linfty-bound}
   0\le  \rho^\DX(x,t) \le \|\rho_0\|_\infty.
 \end{equation}
\end{proposition}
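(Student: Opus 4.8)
The plan is to establish the two-sided bound first for the grid values $\rho^n_j$ by induction on $n$, using the monotonicity of the marching maps from Lemma~\ref{lem:properties-of-H} together with their values on constant states, and then to transfer the bound to all intermediate times $t\in(t^n,t^{n+1})$ via the exact evolution of \eqref{eq:main-approx}. The base case $n=0$ is immediate: since $\rho_0$ takes values in $[0,\|\rho_0\|_\infty]$, each initial average $\rho^0_j$ in \eqref{eq:rho-0-j} is a convex average of such values and hence lies in $[0,\|\rho_0\|_\infty]$, while the ghost values \eqref{eq:boundary-values-0} equal $0$ and trivially satisfy the bound.

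For the inductive step I would assume $0\le \rho^n_j\le \rho_{\max}$ for every index, including the ghost cells. Each value $\rho^{n+1}_j$ is produced by one of the maps $H^\pm$, $H^{AB}_L$, $H^{AC}_R$, $H^C_L$, $H^B_R$ evaluated at time-$t^n$ grid values, according to \eqref{eq:Hpm}--\eqref{eq:HA_LRC} in the three mesh cases A, B, C. By Lemma~\ref{lem:properties-of-H} every such map is nondecreasing in its density arguments, so $\rho^{n+1}_j$ can be bounded below and above by evaluating the very same map at the constant states $0$ and $\rho_{\max}$. For the lower bound I use that $f(0)=0$ forces $\RRR_L(0,\alpha)=\RRR_R(0,\alpha)=0$, so the constant-state identities \eqref{restoL}--\eqref{eq:restoR} give $H(0,\dots,0)=0$ for each marching map; monotonicity then gives $\rho^{n+1}_j\ge 0$. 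For the upper bound the same identities yield $H^\pm(\rho_{\max},\dots)=\rho_{\max}$, and, since $\RRR_L,\RRR_R\ge 0$, each of $H^{AB}_L$, $H^{AC}_R$, $H^C_L$, $H^B_R$ evaluated at $\rho_{\max}$ is of the form $\rho_{\max}-(\text{nonnegative})\le\rho_{\max}$; monotonicity then gives $\rho^{n+1}_j\le\rho_{\max}$. This closes the induction for the grid values, uniformly across Cases A, B and C.

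Finally, for a generic $t\in(t^n,t^{n+1})$ I would note that $\rho^\DX(\cdot,t)$ is the exact solution of \eqref{eq:main-approx} issued from the piecewise-constant datum $\rho^\DX(\cdot,t^n)\in[0,\rho_{\max}]$ with linear interface $\xi^\DT$ of slope $\alpha_{n,\DT}$. Under the CFL condition \eqref{eq:CFL-cond} the elementary waves emanating from distinct cell interfaces do not interact within the strip, so this exact solution is a juxtaposition of classical Riemann fans $\mathcal{R}^\pm$ and of the interface solver $\mathcal{R}^{\alpha_{n,\DT}}$; each is confined to $[0,\rho_{\max}]$ by the classical maximum principle and by the bound \eqref{eq:bound-on-RSalpha}. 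Hence $0\le\rho^\DX(x,t)\le\rho_{\max}$ for all $(x,t)$, which in particular recovers the grid bound upon cell averaging and completes the proof.

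I expect the most delicate point to be the bookkeeping that reduces every marching formula in Cases A, B and C to a single evaluation of a monotone map on constant states carrying the correct sign of the remainder $\RRR_{L,R}$: it is precisely the nonnegativity of these remainders, hence the dissipative inequality $H(\rho_{\max},\dots)\le\rho_{\max}$ at the interface cells, that makes the upper bound propagate. The non-interaction of waves under the CFL condition, needed for the intermediate-time step, is the other point requiring care, though it is by now standard.
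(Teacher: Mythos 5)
Your proof is correct and follows essentially the same route as the paper: induction on $n$ using the monotonicity of the marching maps from Lemma~\ref{lem:properties-of-H}, sandwiching $\rho^{n+1}_j$ between the maps evaluated at the constant states $0$ and $\rho_{\max}$, with the nonnegativity of $\RRR_{L}$, $\RRR_R$ (and their vanishing at $b=0$) supplying the interface-cell bounds. Your additional paragraph extending the bound to intermediate times $t\in(t^n,t^{n+1})$ via the exact evolution and \eqref{eq:bound-on-RSalpha} is a sound supplement that the paper's proof leaves implicit.
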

\begin{proof}
At $t=0$, the property \eqref{Linfty-bound} holds by the definition \eqref{eq:rho-0-j} of the $\rho^0_j$. If the property holds up to $t=n\DT $ then all the densities $\rho^{n+1}_j$, for $-2^{N}\le j\le 2^{N}$, $j\neq m(n+1)$, are the values of one of the functions $H^\pm$, $H^{AB}_L$, $H^{AC}_R$, $H^B_R$ and $H^C_L$. Then, in Case \textbf{B} we have
\begin{align*}
 \underbrace{H^+(0,0,0 ; \DT, \DX)}_{=0} \leq \underbrace{H^+(\rho^{n}_{j-1},\rho^{n}_{j},\rho^{n}_{j+1} ;  \DT, \DX)}_{=\rho^{n+1}_j} \leq \underbrace{H^+(\rho_{\max},\rho_{\max},\rho_{\max}; \DT, \DX)}_{=\rho_{\max}},
\end{align*}
\begin{align*}
 \underbrace{H^{AB}_L(0,0,0 ;\DT, \DX, \alpha_{n,\DT})}_{=0}&\leq \underbrace{H^{AB}_L(\rho^{n}_{m(n)-2},\rho^{n}_{L},\rho^{n}_{R} ;  \DT, \DX,\alpha_{n,\DT})}_{=\rho^{n+1}_L}\\
 &\leq \underbrace{H^{AB}_L(\rho_{\max},\rho_{\max},\rho_{\max}; \DT, \DX,\alpha_{n,\DT})}_{=\rho_{\max}-\frac{\DT}{\Delta_L^{n+1}} \RRR_L(b,\alpha)}\leq \rho_{\max},
\end{align*}
\begin{align*}
 \underbrace{H^{B}_R(0,0,0,0 ; \DT, \DX, \Delta_R, \alpha_{n,\DT})}_{=0} &\leq \underbrace{H^{B}_R(\rho^{n}_{L},\rho^{n}_{R}, \rho^{n}_{m(n)+2},\rho^{n}_{m(n)+3} ;  \DT, \DX,\Delta_R,\alpha_{n,\DT})}_{=\rho^{n+1}_R} \\
 &\leq \underbrace{H^{B}_R(\rho_{\max},\rho_{\max},\rho_{\max},\rho_{\max}; \DT, \DX,\Delta_R, \alpha_{n,\DT})}_{=\rho_{\max}-\frac{\DT}{\Delta_R^{n+1}} \RRR_R(b,\alpha)}\leq \rho_{\max}.
\end{align*}
Cases \textbf{A} and \textbf{C} are similar. Therefore, the inequalities in \eqref{Linfty-bound} hold by induction. 
\end{proof}

\subsection{Approximate entropy inequalities}
In the following lemma, we introduce a discrete version of entropy inequalities. As in a large part of the literature, we consider the family of Kruzhkov entropies $\eta_k(\rho) = |\rho - k|$, for $k\in [0,1]$ and introduce suitable discrete entropy fluxes. In particular, we write
\begin{align*}
    &Q^{\pm,n}_j = h^\pm(\rho^n_{j-1}\vee k, \rho^n_{j}\vee k) - h^\pm(\rho^n_{j-1}\wedge k, \rho^n_{j}\wedge k),\\
    &Q_0^n = h_0(\rho^n_L\vee k, \rho^n_R\vee k,\alpha_{n,\DT}) - h_0(\rho^n_L\wedge k, \rho^n_R\wedge k,\alpha_{n,\DT})\,.
\end{align*}
In the next lemma, we deduce entropy inequalities valid for each cell.

\begin{lemma} \textbf{(Discrete entropy inequalities).}

Fix $k\in [0,1]$, $n\in\mathbb{N}$ and $j \in \left\{ -\frac 1\DX, \ldots , \frac 1 \DX \right\}$ with $j\not = m(n+1)$. Then the following holds.
 
%For the Case \textbf{A},
\boxed{\textbf{Case \textbf{A}}}:
If $j\le m(n+1)-2$ and $j\ge m(n+1)+2$, then
 \begin{equation}\label{eq:discr-entropy-normal}
     \DX|\rho^{n+1}_j - k|\leq \DX|\rho^{n}_j - k| - \DT (Q^{\pm,n}_{j+1} - Q^{\pm,n}_j) %, \quad \textrm{ if } j\le m(n+1)-2, \textrm{ or } j\ge m(n+1)+2,
 \end{equation}
 and 
 \begin{equation}\label{eq:discr-entropy-leftAB}
     \Delta_L^{n+1}|\rho^{n+1}_L - k|\leq \Delta_L^n|\rho^{n}_L - k| - \DT (Q^n_{0} - Q^{-,n}_{m(n)-1}) + \DT\RRR_L(k,\alpha_{n,\DT}), %\quad \textrm{ if } j= m(n+1)-1,
 \end{equation}
 \begin{equation}\label{eq:discr-entropy-rightAC}
     \Delta_R^{n+1}|\rho^{n+1}_R - k|\leq \Delta_R^n|\rho^{n}_R - k| - \DT (Q^{+,n}_{m(n){\color{purple}+2}} -Q^n_{0} )+ \DT\RRR_R(k,\alpha_{n,\DT})\,.
     %\qquad \textrm{ if } j= m(n+1)+1,
 \end{equation}
%{\color{purple} \rm [Here above: $Q^{+,n}_{m(n)+2}$ in place of $Q^{+,n}_{m(n)+1}$. In the second and third ineq., the index $j$ can be removed since there is no dep. on $j$ ]}

\boxed{\textbf{Case \textbf{B}}}:
 %In Case \textbf{B},
 %In this case, 
 The inequality \eqref{eq:discr-entropy-normal} holds for $j\le m(n+1) -3$ and $j\ge m(n+1) + 2$. 
 %For $j = m(n+1) -2$ and $m(n+1)-1$ 
 Moreover, the following analogous of inequalities \eqref{eq:discr-entropy-leftAB}, \eqref{eq:discr-entropy-rightAC} are satisfied:
 \begin{equation}\label{eq:discr-entropy-leftB}
     \left(\Delta_L^{n+1} + \DX\right)|\bar \rho - k|\leq \Delta_L^n|\rho^{n}_L - k| - \DT (Q^n_{0} - Q^{-,n}_{m(n)-1}) + \DT\RRR_L(k,\alpha_{n,\DT}),
 \end{equation} 
 and %for $j = m(n+1)+1$ we have
 \begin{align}\label{eq:discr-entropy-rightB}
     \Delta_R^{n+1}|\rho^{n+1}_R - k|\leq & \,\Delta_R^n|\rho^{n}_R - k| + \DX |\rho^{n}_{m(n)+2} - k|\\
     & - \DT (Q^{+,n}_{m(n)+3} -Q^n_{0} )+ \DT\RRR_R(k,\alpha_{n,\DT}) \nonumber
 \end{align}
where $\bar \rho$ is defined at \eqref{eq:march_caseB_L}.

 \boxed{\textbf{Case \textbf{C}}}:
 %In Case \textbf{C}, 
 The inequality \eqref{eq:discr-entropy-normal} holds for $j\le m(n+1) -2$ and $j\ge m(n+1) + 3$. 
 Moreover, the following analogous of inequalities \eqref{eq:discr-entropy-leftAB}, \eqref{eq:discr-entropy-rightAC} are satisfied:
% For  $j=m(n+1)+1$ and $m(n+1)+2$ the following analogous of inequality \eqref{eq:discr-entropy-rightAC} is satisfied
  \begin{equation}\label{eq:discr-entropy-rightC}
     (\Delta_R^{n+1}+ \DX)|\bar\rho - k|\leq \Delta_R^n|\rho^{n}_R - k| - \DT (Q^{+,n}_{m(n)+2} -Q^n_{0} )+ \DT\RRR_R(k,\alpha_{n,\DT}), 
 \end{equation}
 and %for $j = m(n+1)-1$ we have
 \begin{align}\label{eq:discr-entropy-leftC}
     \Delta_L^{n+1}|\rho^{n+1}_L - k|\leq & \, \Delta_L^n|\rho^{n}_L - k|+\DX|\rho^{n}_{m(n)-2} - k| \\
     & - \DT (Q^n_{0} - Q^{-,n}_{m(n)-2}) + \DT\RRR_L(k,\alpha_{n,\DT}) \nonumber
 \end{align}
where $\bar \rho$ is defined at \eqref{def:right-values-caseC}.
\end{lemma}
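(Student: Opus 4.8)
The plan is to adapt the classical monotone-scheme argument of Crandall and Majda, carrying along the non-standard consistency defects \eqref{restoL}--\eqref{eq:restoR} that arise at the two interface cells. The three ingredients are: the monotonicity of each marching operator $H^-$, $H^+$, $H^{AB}_L$, $H^{AC}_R$, $H^B_R$, $H^C_L$ in all of its density arguments, established in Lemma~\ref{lem:properties-of-H}; the consistency identities just proved, which read $H^\pm(k,k,k)=k$ for the interior operators and $H^{AB}_L(k,k,k)=k-\tfrac{\DT}{\Delta_L+\alpha\DT}\RRR_L(k,\alpha)$, $H^{AC}_R(k,k,k)=k-\tfrac{\DT}{\Delta_R-\alpha\DT}\RRR_R(k,\alpha)$, and analogously for $H^B_R$, $H^C_L$, at the interface; and the non-negativity of the remainders $\RRR_L$, $\RRR_R$.

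For an interior cell of Case A governed by $H^-$ (the case $j\le m(n+1)-2$; the right-moving case with $H^+$ is identical), I write $a\vee k$, $a\wedge k$ for the componentwise maximum and minimum of a triple with $k$. Monotonicity gives $H^-(\rho^n_{j-1}\vee k,\rho^n_j\vee k,\rho^n_{j+1}\vee k)\ge H^-(\rho^n_{j-1},\rho^n_j,\rho^n_{j+1})=\rho^{n+1}_j$, and consistency $H^-(k,k,k)=k$ gives the same quantity $\ge k$; hence it dominates $\rho^{n+1}_j\vee k$. The mirror chain with $\wedge$ shows $H^-(\rho^n_{j-1}\wedge k,\dots)\le\rho^{n+1}_j\wedge k$. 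Subtracting the two bounds and inserting the definition \eqref{Hpm} of $H^-$, the right-hand side collapses into the entropy-flux increment $Q^{-,n}_{j+1}-Q^{-,n}_j$; multiplying through by $\DX$ yields \eqref{eq:discr-entropy-normal}.

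The interface cells are treated in exactly the same way, the only new feature being the consistency defect. Take $j=m(n+1)-1$ in Case A: monotonicity gives $H^{AB}_L(\cdot\vee k)\ge\rho^{n+1}_L$, while consistency \eqref{restoL} gives $H^{AB}_L(\cdot\vee k)\ge k-\tfrac{\DT}{\Delta_L+\alpha\DT}\RRR_L(k,\alpha)$. Since $\RRR_L\ge0$, adding $\tfrac{\DT}{\Delta_L+\alpha\DT}\RRR_L$ to the first bound keeps it $\ge\rho^{n+1}_L$ and lifts the second to $\ge k$, so the shifted quantity dominates $\rho^{n+1}_L\vee k$; the $\wedge$ side gives $H^{AB}_L(\cdot\wedge k)\le\rho^{n+1}_L\wedge k$ with no shift. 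Subtracting, expanding via \eqref{HA_L}, and multiplying by $\Delta_L+\alpha_{n,\DT}\DT=\Delta^{n+1}_L$ (identity \eqref{eq:lenght-cellsLR-A}) turns the shift into the remainder $\DT\RRR_L(k,\alpha_{n,\DT})$ and produces exactly \eqref{eq:discr-entropy-leftAB}; the cell $j=m(n+1)+1$ is the mirror computation with $H^{AC}_R$, \eqref{eq:restoR}, and $\Delta^{n+1}_R=\Delta^n_R-\alpha_{n,\DT}\DT$.

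Cases B and C reproduce this scheme with two bookkeeping changes. First, the same interface operators are now normalized differently: in Case B the denominator of $H^{AB}_L$ equals $\Delta_L+\alpha_{n,\DT}\DT=\DX+\Delta^{n+1}_L$ by \eqref{caseB-id_DX}, so the final multiplication reconstitutes the coefficient $\DX+\Delta^{n+1}_L$ of \eqref{eq:discr-entropy-leftB}, and symmetrically for the right cell in Case C via \eqref{caseC-id_DX}. Second, the four-argument operators $H^B_R$ and $H^C_L$ carry a genuine convex-combination weight on the incoming neighbour ($\rho^n_{m(n)+2}$ in Case B, $\rho^n_{m(n)-2}$ in Case C); running the two-sided argument for these four-argument maps produces the extra boundary term $\DX|\rho^n_{m(n)+2}-k|$ (resp.\ $\DX|\rho^n_{m(n)-2}-k|$) on the right of \eqref{eq:discr-entropy-rightB} (resp.\ \eqref{eq:discr-entropy-leftC}), while the Godunov flux contributes $Q^{+,n}_{m(n)+3}$ (resp.\ $Q^{-,n}_{m(n)-2}$). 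The main point requiring care throughout is the sign of the consistency defect: one must use $\RRR_L,\RRR_R\ge0$ to guarantee that $H(k,\dots,k)\le k$, since it is precisely this one-sided defect that lets the max-comparison and the min-comparison close simultaneously and leaves the remainder on the right-hand side with the correct nonnegative sign. The variable cell lengths also forbid factoring out a single ratio $\DT/\DX$, so the weights $\Delta^n_L,\Delta^n_R$ must be carried through each step and collapsed only at the end using \eqref{eq:lenght-cellsLR-A}, \eqref{caseB-id_DX}, \eqref{caseC-id_DX}.
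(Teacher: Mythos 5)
Your proposal is correct and follows essentially the same route as the paper: the Crandall--Majda two-sided comparison with $\vee k$ and $\wedge k$, using the monotonicity from Lemma~\ref{lem:properties-of-H}, the consistency identities with the nonnegative defects $\RRR_L,\RRR_R$, and the cell-length identities \eqref{eq:lenght-cellsLR-A}, \eqref{caseB-id_DX}, \eqref{caseC-id_DX} to reconstitute the coefficients $\Delta^{n+1}_L$, $\DX+\Delta^{n+1}_L$, etc. The paper likewise only details Case B and treats the interior cells as classical, so your level of detail matches its proof.
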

 \begin{proof}
 The inequality in \eqref{eq:discr-entropy-normal} is classical {\blue and it is based on \eqref{eq:standard-marching-minus}, \eqref{eq:standard-marching-plus}}. 
 
 The computations in the proofs of \eqref{eq:discr-entropy-leftAB} - \eqref{eq:discr-entropy-leftC} being very similar, we only detail the inequalities involved in Case \textbf{B} to get \eqref{eq:discr-entropy-leftB} and \eqref{eq:discr-entropy-rightB}. Remember that in this case  $\rho^{n+1}_{m(n+1)-2}$ and $\rho^{n+1}_L$ coincide, see \eqref{def:left-values-caseB}, and their common value $\bar\rho$ is computed via \eqref{eq:march_caseB_L}.
 
 Due to the monotonicity properties of the function $H^{AB}_L$ and recalling \eqref{restoL}, %{\blue [and the property ...] }
 we have that  
   \begin{align*}
   &H^{AB}_L(\rho^n_{m(n)-2}\wedge k,\rho^n_L\wedge k,\rho^n_R\wedge k ; \DT, \Delta^n_L, \alpha_{n,\DT})\\
   &\quad \le %\Delta^{n+1}_L(
     H^{AB}_L(\rho^n_{m(n)-2},\rho^n_L,\rho^n_R; \DT, \Delta^n_L, \alpha_{n,\DT}) \wedge H^{AB}_L(k,k,k; \DT, \Delta^n_L, \alpha_{n,\DT}) \\
     &\quad \le \bar\rho\wedge k
       \end{align*}
       and similarly that
      \begin{align*}     
  %  \Delta^{n+1}_L
  \bar\rho\vee k &\le H^{AB}_L(\rho^n_{m(n)-2}\vee k,\rho^n_L\vee k,\rho^n_R\vee k; \DT, \Delta^n_L, \alpha_{n,\DT})  + \frac{\DT}{\Delta_L^{n}+ \alpha_{n,\DT}\DT} \RRR_L(k,\alpha_{n,\DT}) \,. 
  \end{align*}

  \begin{comment}
  {\blue [{\sc comment}: 
  By monotonicity, the $H^{AB}_L$ on the left should be $\le$ of the $H^{AB}_L$ on the right, with no need of the remainder. Maybe they are two distinct inequalities?
  Or possibly use the following identity, obtained from \eqref{identity-RRR},
  \begin{align*}
  \left(\DX + \Delta^{n+1}_L \right) ~k & =  \Delta^n_L\, k + \alpha_{n,\DT} \DT \, k \\
  &= \Delta^n_L k - \DT \left[ h_0\left(k,k,\alpha_{n,\DT}\right) -  h^-\left(k,k)\right)\right] + \DT\, \RRR_L(k,\alpha_{n,\DT})\,,
  \end{align*}
  and combine it directly with the def. of $\bar \rho$.]}
\end{comment}  
Recalling \eqref{caseB-id_DX},
\begin{equation*}
%%%\label{caseB-id_DX}
\DX + \Delta^{n+1}_L= \Delta^n_L +\alpha_{n,\DT} \DT\,,\qquad 
\Delta^{n+1}_R= \Delta^n_R -\alpha_{n,\DT} \DT + \DX\,,    \end{equation*}
and the definition of $H^{AB}_L$ in \eqref{HA_L}, 
we combine the previous inequalities
%\DX + \Delta^{n+1}_L= \Delta^n_L +\alpha_{n,\DT} \DT.  
to find that
  \begin{align*}
   & \left(\DX+ \Delta^{n+1}_L \right)|\bar\rho- k| =\left(\DX+ \Delta^{n+1}_L \right)\left[(\bar\rho\vee k)- (\bar\rho\wedge k)\right]\\
    &\qquad \le \left(\DX+ \Delta^{n+1}_L \right) \left[ H^{AB}_L(\rho^n_{m(n)-2}\vee k,\rho^n_L\vee k,\rho^n_R\vee k; \DT, \Delta^n_L, \alpha_{n,\DT})\right.\\ &\qquad \quad \left. -H^{AB}_L(\rho^n_{m(n)-2}\wedge k,\rho^n_L\wedge k,\rho^n_R\wedge k; \DT, \Delta^n_L, \alpha_{n,\DT}) + \frac{\DT}{\Delta_L^{n}+ \alpha_{n,\DT}\DT} \RRR_L(k,\alpha_{n,\DT})\right] \\
    &\qquad = \Delta^n_L \left[(\rho^{n}_{L}\vee k)- (\rho^{n}_{L}\wedge k)\right]\\
    &\qquad\quad -\DT \Big[ h_0(\rho^n_L\vee k,\rho^n_R\vee k,\alpha_{n,\DT})-h_0(\rho^n_L\wedge k,\rho^n_R\wedge k, \alpha_{n,\DT})\Big.\\
    &\qquad \quad-\left.h^-(\rho^n_{m(n)-2}\vee k,\rho^n_L\vee k)+h^-(\rho^n_{m(n)-2}\wedge k,\rho^n_L\wedge k)\right] + \DT \RRR_L(k,\alpha_{n,\DT})\\
    &\qquad = \Delta^{n}_L |\rho^{n}_{L}- k| - \DT(Q^n_{0} - Q^{-,n}_{m(n)-1})+ \DT \,\RRR_L(k,\alpha_{n,\DT}).
  \end{align*}
  This proves \eqref{eq:discr-entropy-leftAB}. In a similar way 
  \begin{align*}
     &\rho^{n+1}_{R}\wedge k \ge H^{B}_R(\rho^n_L\wedge k,\rho^n_R\wedge k,\rho^n_{m(n)+2}\wedge k,\rho^n_{m(n)+3}\wedge k ; \DT,\DX, \Delta^n_R, \alpha_{n,\DT}),\\
     &\rho^{n+1}_{R}\vee k \le H^{B}_R(\rho^n_L\vee k,\rho^n_R\vee k, \rho^n_{m(n)+2}\vee k,\rho^n_{m(n)+3}\vee k ;\DT,\DX, \Delta^n_R, \alpha_{n,\DT}) \\&\phantom{\Delta^{n+1}_R(\rho^{n+1}_{R}\vee k)}\qquad + \frac{\DT}{\Delta_R^n+\DX-\alpha_{n,\DT}\DT} \RRR_R(k,\alpha_{n,\DT}) , 
  \end{align*}
  then 
  \begin{align*}
    \Delta^{n+1}_R &|\rho^{n+1}_{R}- k| =\Delta^{n+1}_R\left[(\rho^{n+1}_{R}\vee k)- (\rho^{n+1}_{R}\wedge k)\right]\\\le & ~\Delta^{n+1}_R\left[H^{B}_R(\rho^n_L\vee k,\rho^n_R\vee k, \rho^n_{m(n)+2}\vee k,\rho^n_{m(n)+3}\vee k ;\DT,\DX, \Delta^n_R, \alpha_{n,\DT})\right. \\ &\quad -H^{B}_R(\rho^n_L\wedge k,\rho^n_R\wedge k,\rho^n_{m(n)+2}\wedge k,\rho^n_{m(n)+3}\wedge k ; \DT,\DX, \Delta^n_R, \alpha_{n,\DT})\\ &\left. \quad + \frac{\DT}{\Delta_R^n+\DX-\alpha_{n,\DT}\DT} \RRR_R(k,\alpha_{n,\DT})\right] \\=
    &~ \Delta^n_R \left[(\rho^{n}_{R}\vee k)- (\rho^{n}_{R}\wedge k)\right] +\DX \left[(\rho^{n}_{m(n)+2}\vee k)- (\rho^{n}_{m(n)+2}\wedge k)\right]\\
    &\quad - \DT \Big[ h^+(\rho^n_{m(n)+2}\vee k,\rho^n_{m(n)+3}\vee k)-h^+(\rho^n_{m(n)+2}\wedge k,\rho^n_{m(n)+3}\wedge k)\Big.\\
    &\quad -\left.h_0(\rho^n_L\vee k,\rho^n_R\vee k,\alpha_{n,\DT})+h_0(\rho^n_L\wedge k,\rho^n_R\wedge k, \alpha_{n,\DT})\right] + \DT \RRR_R(k,\alpha_{n,\DT})\\
    =&~ \Delta^{n}_R |\rho^{n}_{R}- k|+ \DX |\rho^{n}_{m(n)+2}-k| - \DT( Q^{+,n}_{m(n)+3} -Q^n_{0} )+ \DT \RRR_R(k,\alpha_{n,\DT}).
  \end{align*}
  This completes the proof of \eqref{eq:discr-entropy-rightAC}.
 \end{proof}

Next, we define the approximate entropy flux $Q^\DX(x,t)$  %$Q^\Delta$ 
as follows: %\hl{TO BE MODIFIED}
\begin{align*}
    &Q^\DX (x,t) %\Delta %(\rho^\DX,k) 
    \\ &=\sum_n \left[
    \sum_{j\le m(n)-1 %\in\{-2^N+1,\ldots, m(n)-1\}
    } Q^{-,n}_j  \cdot \mathbbm{1}_{ P^n_{j-1}(x,t)} + Q_0^n \cdot \mathbbm{1}_{P^n_{m(n)-1}}(x,t) + \sum_{j\ge m(n)+1
    %\in\{m(n)+1,\ldots 2^N\}
    } Q^{+,n}_{j+1}  \cdot \mathbbm{1}_{P^n_j}(x,t)\right]\,,
\end{align*}
where, using \eqref{eq:def-alpha_n} and Definition~\ref{def:the-mesh}, we set 
\begin{align*}
P^n_{j} & : = C_{j} \times  [t^n, t^{n+1})\,,\qquad j\not = m(n), m(n)\pm 1\\
P^n_L & : = \left\{(x,t): x_{m(n)-3/2}\le x < \xi^\DT(t)\,,\ t^n\le t < t^{n+1} \right\}\\
P^n_R & : = \left\{(x,t): \xi^\DT(t) < x \le x_{m(n)+3/2} \,,\ t^n\le t < t^{n+1} \right\}\,.
\end{align*}
%$$ if $j\neq m(n)-1, m(n), m(n)+1$ \hl{and $\ldots$}
%
%{\color{purple} 
Otherwise written as
%Possible modification:
\begin{equation*}
    Q^\DX (x,t) = \begin{cases}
        Q^{-,n}_{j+1} & (x,t) \in P^n_j, \qquad j \le m(n)-2\\
         Q_0^n & (x,t)\in P^n_L %\cup P^n_R
         \\
         Q^{+,n}_{j+1} & (x,t) \in P^n_j, \qquad j \ge m(n)+1
    \end{cases}
\end{equation*}
%}

\begin{proposition}{\textbf{(Approximate entropy inequalities).}} Fix $n\in\mathbb{N}$, $k\in [0,1]$ and let $\varphi\in C^\infty_c([0,T]\times\eR)$, with $\varphi \ge 0$. There holds 
\begin{align*}
    &\int_{t^n}^{t^{n+1}}\int_\eR \left( |\rho^\DX - k| \partial_t\varphi + Q^\Delta(\rho^\DX, k)\partial_x\varphi\right)\, dx\,dt + \\
    &+ \int_\eR |\rho^\DX (t^n, x) - k| \varphi(t^n, x)\, dx - \int_\eR |\rho^\DX (t^{n+1}, x) - k| \varphi(t^{n+1}, x)\, dx \\
    &+ \int_{t^n}^{t^{n+1}}2f(k) \varphi(t, \xi^{\DT}(t))\, dx\geq \mathcal{O}(\DX^2) +\mathcal{O}(\DX\DT)+\mathcal{O}(\DT^2).
\end{align*}
\end{proposition}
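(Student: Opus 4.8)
I would derive the stated space--time inequality directly from the per-cell discrete entropy inequalities of the preceding lemma, namely \eqref{eq:discr-entropy-normal}--\eqref{eq:discr-entropy-rightAC} together with their Case \textbf{B} and \textbf{C} analogues. Each of these has the schematic form $\DX^{n+1}_j|\rho^{n+1}_j-k|\le \DX^n_j|\rho^n_j-k|-\DT\,(Q_{j+1/2}-Q_{j-1/2})+\DT\,\RRR_j$, where the entropy--flux difference is $Q^{\pm,n}_{j+1}-Q^{\pm,n}_{j}$ on regular cells and involves $Q_0^n$ on the two cells flanking $\xi^n$, while the remainder $\RRR_j$ is nonzero only on those two cells, equalling $\RRR_L(k,\alpha_{n,\DT})$ and $\RRR_R(k,\alpha_{n,\DT})$, with $\RRR_L+\RRR_R=2f(k)$. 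Multiplying the inequality attached to $\CCC^n_j$ by the nonnegative weight $w_j:=\tfrac1{\DX^{n+1}_j}\int_{\CCC^{n+1}_j}\varphi(t^{n+1},x)\,dx\ge 0$ and summing over $j\ne m(n+1)$ preserves the inequality and yields a single scalar estimate, whose three groups of terms I will identify with the time, flux, and interface contributions of the statement.

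\textbf{Time terms.} In the analysis one works with the piecewise--constant representation $\rho^\DX\equiv\rho^n_j$ on $\CCC^n_j\times[t^n,t^{n+1})$, for which every solution--dependent integral in the statement is an exact discrete sum: the two boundary integrals equal $\sum_j|\rho^n_j-k|\int_{\CCC^n_j}\varphi(t^n)$ and $\sum_j|\rho^{n+1}_j-k|\int_{\CCC^{n+1}_j}\varphi(t^{n+1})$, while $\int_{t^n}^{t^{n+1}}\!\int_\eR|\rho^\DX-k|\partial_t\varphi=\sum_j|\rho^n_j-k|\int_{\CCC^n_j}[\varphi(t^{n+1})-\varphi(t^n)]\,dx$ since $\int_{t^n}^{t^{n+1}}\partial_t\varphi\,dt=\varphi(t^{n+1})-\varphi(t^n)$. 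Adding the boundary integrals telescopes the time part to $\sum_j|\rho^n_j-k|\int_{\CCC^n_j}\varphi(t^{n+1})-\sum_j|\rho^{n+1}_j-k|\int_{\CCC^{n+1}_j}\varphi(t^{n+1})$, whose second sum is exactly $\sum_j w_j\DX^{n+1}_j|\rho^{n+1}_j-k|$ and is bounded above by the summed discrete inequalities. The surviving difference $\sum_j|\rho^n_j-k|\big[\int_{\CCC^n_j}\varphi(t^{n+1})-\DX^n_j w_j\big]$ vanishes on every regular cell (where $\CCC^n_j=\CCC^{n+1}_j$, $\DX^n_j=\DX^{n+1}_j=\DX$) and remains only on the finitely many cells near $\xi^n$, where the cell changes and the widths differ by $\mathcal{O}(\DT)$ in Case \textbf{A} or by $\mathcal{O}(\DX)$ in Cases \textbf{B}, \textbf{C}; being a bounded number of terms it contributes at most $\mathcal{O}(\DX^2)+\mathcal{O}(\DX\DT)$.

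\textbf{Flux and interface terms.} For the flux part I would apply a discrete summation by parts to $\DT\sum_j w_j(Q_{j+1/2}-Q_{j-1/2})$, the boundary terms vanishing since $\varphi$ is compactly supported; as $Q^\DX$ is piecewise constant with exactly these edge values, a Taylor expansion of $\varphi$ about the cell edges identifies the result with $\int_{t^n}^{t^{n+1}}\!\int_\eR Q^\DX\partial_x\varphi$ up to $\mathcal{O}(\DX^2)+\mathcal{O}(\DX\DT)$ (the $\DX\DT$ scale coming from evaluating the weights at $t^{n+1}$ rather than time--averaging over the step). For the interface part, the remainders contribute $\DT\,(w_L\RRR_L+w_R\RRR_R)$, where $w_L,w_R$ are the weights of the two cells flanking $\xi^n$; since both abut the single grid point $\xi^n$ one has $w_L,w_R=\varphi(t^n,\xi^n)+\mathcal{O}(\DX)$, and combining $\RRR_L+\RRR_R=2f(k)$ with $\xi^\DT(t)=\xi^n+\mathcal{O}(\DT)$ on $[t^n,t^{n+1}]$ recovers $2\int_{t^n}^{t^{n+1}}f(k)\varphi(t,\xi^\DT(t))\,dt$ up to $\mathcal{O}(\DX\DT)+\mathcal{O}(\DT^2)$. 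Collecting the three groups turns the summed discrete inequality into exactly the asserted statement with the three error scales on the right.

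\textbf{Main obstacle.} The step I expect to be most delicate is the bookkeeping at the moving interface in Cases \textbf{A}, \textbf{B}, and \textbf{C}. There the cells $D^n_L,D^n_R$ have variable widths $\Delta^n_L,\Delta^n_R$ governed by \eqref{eq:lenght-cellsLR-A}, \eqref{caseB-id_DX}, \eqref{caseC-id_DX}, the index $m(n)$ may jump, and the two lumped cells correspond to three reference cells; consequently the summation by parts no longer telescopes on a uniform grid and each case must be checked separately to confirm that the mesh--width changes, the time--level mismatch in the weights $w_j$, and the motion of $\xi^\DT$ generate nothing beyond $\mathcal{O}(\DX^2)+\mathcal{O}(\DX\DT)+\mathcal{O}(\DT^2)$. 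The CFL condition \eqref{eq:CFL-cond}, which forces $m(n+1)\in\{m(n),m(n)\pm1\}$ and keeps $\Delta^n_L,\Delta^n_R\in[\DX,2\DX]$, is precisely what guarantees that these interface contributions remain a bounded and controllable number of terms.
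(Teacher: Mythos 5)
Your proposal is correct and follows essentially the same route as the paper: multiply each per-cell discrete entropy inequality by the cell average of $\varphi(t^{n+1},\cdot)$ over the new cell $\CCC^{n+1}_j$, sum over $j\neq m(n+1)$, perform discrete summation by parts on the flux differences, use $\RRR_L(k,\alpha_{n,\DT})+\RRR_R(k,\alpha_{n,\DT})=2f(k)$ to produce the interface term, and absorb the mismatches caused by the changing cell geometry near $\xi^n$ (via \eqref{eq:lenght-cellsLR-A}, \eqref{caseB-id_DX}, \eqref{caseC-id_DX}) into $\mathcal{O}(\DX^2)+\mathcal{O}(\DX\DT)+\mathcal{O}(\DT^2)$. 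You also correctly single out the interface bookkeeping in Cases \textbf{A}, \textbf{B}, \textbf{C} as the delicate step, which is precisely where the paper's proof spends its effort (it carries out the detailed estimates for Case \textbf{B}).
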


\begin{proof}
For any $n\in \mathbb{N}$, we can define a piecewise constant discretization of the profile of test function $\varphi$ at time $t= t^n$
\begin{equation*}
 \varphi(t^n, x) \approx \sum_{j\neq m(n)}   \varphi_j^n \mathbbm{1}_{\CCC^n_j}(x) = \sum_{j\neq m(n)}  \mathbbm{1}_{\CCC^n_j}(x)  \frac{1}{|\CCC^n_j|}\int_{\CCC^n_j} \varphi(t^n, x) \, dx.
\end{equation*}
Recalling the notation \eqref{def:cell-size}  %{def-DeltaLR} 
for $\Delta^n_L$ and $\Delta^n_R$,
%In addition, 
we consider 
 %\begin{multicols}{2}
 \begin{align*}
    &\varphi_{\Delta^n_R}^{n+1}  =  \frac{1}{\Delta^n_R}\int_{D^n_R} \varphi(t^{n+1}, x) \, dx,\qquad \hfill \varphi_{\Delta^n_L}^{n+1}  =  \frac{1}{\Delta^n_L}\int_{D^n_L} \varphi(t^{n+1}, x) \, dx,\\
  &\varphi_{\Delta\xi}^{n+1}  = \frac{1}{\alpha_{n,\DT} \DT} \int^{\xi^{n+1}}_{\xi^n}\fhi(t^{n+1}, x) \, d x  \quad\hfill
  \varphi_{m(n)+2}^{n+1}  = \frac{1}{\DX} \int^{x_{m(n)+5/2}}_{x_{m(n)+3/2}}\fhi(t^{n+1}, x) \, d x  .
   \end{align*}
  %\end{multicols}
 %
 %\hl{
 The following computations apply to Case B, that is, $m(n+1) = m(n) +1$.
 %}
% 
 We multiply each of the discrete entropy inequalities in  \eqref{eq:discr-entropy-normal}, \eqref{eq:discr-entropy-leftB} and \eqref{eq:discr-entropy-rightB} by $\fhi^{n+1}_j$, for $j\neq m(n+1)$ and we sum them to obtain
 \begin{align}
 %\begin{aligned}
   & \sum_{j\neq m(n+1)} |\CCC^{n+1}_j| |\rho^{n+1}_j - k| \fhi^{n+1}_j \nonumber\\
    &\leq \sum_{j\leq m(n+1)-3}\left[ |\CCC^{n}_j| |\rho^{n}_j - k| - \DT \left( Q^{-, n}_{j+1} - Q^{-, n}_{j}\right)\right] \fhi^{n+1}_j \nonumber\\
    &\  + \sum_{j\geq m(n+1)-2}\left[ |\CCC^{n}_j| |\rho^{n}_j - k| - \DT \left( Q^{+, n}_{j+1} - Q^{+, n}_{j}\right)\right] \fhi^{n+1}_j \label{approx-entropy1} \\
    & \qquad\  + \left[\Delta^n_L |\rho^n_L-k| - \DT \left( Q^n_0 - Q^{-, n}_{m(n)-1}\right) + \DT \RRR_L(k,\alpha_{n,\DT})\right] \fhi^{n+1}_{m(n+1)-1} \nonumber\\
    &\   +\DX |\bar\rho-k| \left( \fhi^{n+1}_{m(n+1)-2}-\fhi^{n+1}_{m(n+1)-1}\right) \nonumber\\
    &\  + \left[\Delta^n_R |\rho^n_R-k| + \DX \nonumber |\rho^n_{m(n)+2}-k|- \DT \left( Q^{+, n}_{m(n)+3} -Q^n_0 \right) + \DT \RRR_R(k,\alpha_{n,\DT})\right] \fhi^{n+1}_{m(n+1)+1}.
% \end{aligned}
 \end{align}
  Thanks to equality \eqref{caseB-id_DX} and the fact that $|\Delta^p_L| + |\Delta^p_R| = 3\DX$, for any $p$, we observe that  
  \begin{align*}
    &\Delta^n_R |\rho^n_R-k|  \fhi^{n+1}_{m(n+1)+1} = \Delta^n_R |\rho^n_R-k| \frac{1}{\Delta^{n+1}_R} \int_{\Delta^{n+1}_R} \fhi(t^{n+1}, x) \, d x \\
    & = \Delta^n_R |\rho^n_R-k| \frac{1}{\Delta^{n+1}_R} \left[ |\Delta^n_R| \frac{1}{|\Delta^{n}_R|} \int_{\Delta^{n}_R} \fhi(t^{n+1}, x) \, d x  +\DX \frac{1}{\DX} \int^{x_{m(n+1)+3/2}}_{x_{m(n+1)+1/2}} \fhi(t^{n+1}, x) \, dx \right.\\
    &\phantom{|\Delta^n_R| |\rho^n_R-k|}\left.-\alpha_{n,\DT} \DT \frac{1}{\alpha_{n,\DT} \DT} \int^{\xi^{n+1}}_{\xi^n}\fhi(t^{n+1}, x) \, d x   \right]\\
    & = \Delta^n_R |\rho^n_R-k| \frac{1}{\Delta^{n+1}_R} \left[ |\Delta^n_R| \fhi^{n+1}_{\Delta^{n}_R}  +\DX \fhi^{n+1}_{m(n)+2}  -\alpha_{n,\DT} \DT  \varphi_{\Delta\xi}^{n+1} \right]\\
    & = \Delta^n_R |\rho^n_R-k| \fhi^{n+1}_{\Delta^{n}_R}   + \Delta^n_R |\rho^n_R-k| \left[ - \frac{\Delta^{n+1}_R- |\Delta^{n}_R|}{\Delta^{n+1}_R} \fhi^{n+1}_{\Delta^{n}_R}+\frac{\DX}{\Delta^{n+1}_R}  \fhi^{n+1}_{m(n)+2}  -\frac{\alpha_{n,\DT} \DT }{\Delta^{n+1}_R}  \varphi_{\Delta\xi}^{n+1} \right]\\
    &= \Delta^n_R |\rho^n_R-k| \fhi^{n+1}_{\Delta^{n}_R}   + \Delta^n_R |\rho^n_R-k| \left[  \frac{\DX}{\Delta^{n+1}_R} \left( \fhi^{n+1}_{m(n)+2} -\fhi^{n+1}_{\Delta^{n}_R}\right)  -\frac{\alpha_{n,\DT} \DT }{\Delta^{n+1}_R} \left( \varphi_{\Delta\xi}^{n+1}-\fhi^{n+1}_{\Delta^{n}_R}\right) \right]\\
    &= \Delta^n_R |\rho^n_R-k| \fhi^{n+1}_{\Delta^{n}_R} + \left(\mathcal{O}(\DX^2) + \mathcal{O}(\DX\DT)\right)\|\rho_0\|_\infty \|\partial_x\fhi\|_\infty ,
  \end{align*}
  and, recalling that $|C^n_{m(n)+2}|=\DX$
  \begin{align*}
   & |C^n_{m(n)+2}| |\rho^n_{m(n)+2}-k|  \fhi^{n+1}_{m(n+1)+1} = |C^n_{m(n)+2}| |\rho^n_{m(n)+2}-k| \fhi^{n+1}_{m(n)+2}\\
  &\qquad \qquad+|C^n_{m(n)-2}| |\rho^n_{m(n)+2}-k|  \left[  \frac{|\Delta^{n}_R|}{\Delta^{n+1}_R} \left( \fhi^{n+1}_{m(n)+2} -\fhi^{n+1}_{\Delta^{n}_R}\right)  -\frac{\alpha_{n,\DT} \DT }{\Delta^{n+1}_R} \left( \fhi^{n+1}_{m(n)+2}-\varphi_{\Delta\xi}^{n+1}\right) \right] \\
  &\qquad= |C^n_{m(n)+2}| |\rho^n_{m(n)+2}-k| \fhi^{n+1}_{m(n)+2} + \left(\mathcal{O}(\DX^2) + \mathcal{O}(\DX\DT)\right)\|\rho_0\|_\infty \|\partial_x\fhi\|_\infty ,
  \end{align*}
  On the left side of the interface, we similarly have, 
   \begin{align*}
    &\Delta^n_L |\rho^n_L-k|  \fhi^{n+1}_{m(n+1)-1} = \Delta^n_L |\rho^n_L-k| \frac{1}{\Delta^{n+1}_L} \int_{\Delta^{n+1}_R} \fhi(t^{n+1}, x) \, d x \\
    & = \Delta^n_L |\rho^n_L-k| \frac{1}{\Delta^{n+1}_L} \left[ |\Delta^n_L| \fhi^{n+1}_{\Delta^{n}_L}  -\DX \fhi^{n+1}_{m(n+1)-2}  +\alpha_{n,\DT} \DT  \varphi_{\Delta\xi}^{n+1} \right]\\
    &= \Delta^n_L |\rho^n_L-k| \fhi^{n+1}_{\Delta^{n}_L}   + \Delta^n_L |\rho^n_L-k| \left[  \frac{\DX}{\Delta^{n+1}_L} \left( \fhi^{n+1}_{m(n+1)-2} -\fhi^{n+1}_{\Delta^{n}_L}\right)  -\frac{\alpha_{n,\DT} \DT }{\Delta^{n+1}_L} \left( \varphi_{\Delta\xi}^{n+1}-\fhi^{n+1}_{\Delta^{n}_L}\right) \right]\\
    &= \Delta^n_L |\rho^n_L-k| \fhi^{n+1}_{\Delta^{n}_L} + \left(\mathcal{O}(\DX^2) + \mathcal{O}(\DX\DT)\right)\|\rho_0\|_\infty \|\partial_x\fhi\|_\infty .
  \end{align*}
  In the notation of Def.~\ref{def:the-mesh}, we recall that $\CCC^n_j = \CCC^{n+1}_j$ for all $j\leq m(n+1)-3 = m(n)-2$ or $j\geq m(n+1) +2=m(n)+3$ and that  $C^n_{m(n)-1} = D^n_L$, $C^n_{m(n)+1} = D^n_R$. With these notations, the estimates above and classical rearranging of the terms in \eqref{approx-entropy1} allow us to write 
  %\begin{equation}\label{approx-entropy2}
 \begin{align}\label{approx-entropy2}
    \sum_{j\neq m(n+1)} &|\CCC^{n+1}_j| |\rho^{n+1}_j - k| \fhi^{n+1}_j- \sum_{j\neq m(n)-1, m(n), m(n)+1} |\CCC^{n}_j| |\rho^{n}_j - k| \fhi^{n+1}_j\nonumber\\
    &- |\Delta^n_L| |\rho^n_L-k|\fhi^{n+1}_{\Delta^n_L}-|\Delta^n_R| |\rho^n_R-k|\fhi^{n+1}_{\Delta^n_R}\nonumber\\
    & \leq -\DT \left[\sum_{j\leq m(n+1)-3}  Q^{-, n}_{j} \left(\fhi^{n+1}_{j-1}-\fhi^{n+1}_j\right)  
     + \sum_{j\geq m(n+1)+3} Q^{+, n}_{j} \left(\fhi^{n+1}_{j-1}-\fhi^{n+1}_j\right)\right]\nonumber\\
    &\qquad- \DT \left[ Q^{-, n}_{m(n)-1} \fhi^{n+1}_{m(n+1)-3}- Q^{+, n}_{m(n)+3}\fhi^{n+1}_{m(n+1)+2}\right.\nonumber\\
    &\qquad\left. +\left( Q^n_0 - Q^{-, n}_{m(n)-1}\right) \fhi^{n+1}_{m(n+1)-1} +\left( Q^{+, n}_{m(n)+3} -Q^n_0 \right)\fhi^{n+1}_{m(n+1)+1}\right]\nonumber\\
    &\qquad+ \DT\left[ \RRR_L(k,\alpha_{n,\DT}) \fhi^{n+1}_{m(n+1)-1}+\RRR_R(k,\alpha_{n,\DT}) \fhi^{n+1}_{m(n+1)+1}\right]\nonumber\\
    &\qquad +\DX |\bar\rho-k| \left( \fhi^{n+1}_{m(n+1)-2}-\fhi^{n+1}_{m(n+1)-1}\right)+\left(\mathcal{O}(\DX^2) + \mathcal{O}(\DX\DT)\right)\|\rho_0\|_\infty \|\partial_x\fhi\|_\infty\nonumber\\
    &\leq -\DT \left[\sum_{j\leq m(n+1)-2}  Q^{-, n}_{j} \left(\fhi^{n+1}_{j-1}-\fhi^{n+1}_j\right)  
     + \sum_{j\geq m(n+1)+2} Q^{+, n}_{j} \left(\fhi^{n+1}_{j-1}-\fhi^{n+1}_j\right)\right]\nonumber\\
    &\qquad- \DT \left[  Q^n_0 \left(\fhi^{n+1}_{m(n+1)-1}-\fhi^{n+1}_{m(n+1)+1}\right) + Q^{-, n}_{m(n)-1}\left(\fhi^{n+1}_{m(n+1)-2}-\fhi^{n+1}_{m(n+1)-1}\right)\right.\nonumber\\
    &\qquad\qquad \left. + Q^{+, n}_{m(n)+3} \left(\fhi^{n+1}_{m(n+1)+1}-\fhi^{n+1}_{m(n+1)+2}\right)\right]\nonumber\\
    &\qquad+ \DT\left[ \left(\RRR_L(k,\alpha_{n,\DT}) +\RRR_R(k,\alpha_{n,\DT})\right) \fhi^{n+1}_{m(n+1)-1}+\RRR_R(k,\alpha_{n,\DT}) \left(\fhi^{n+1}_{m(n+1)+1}-\fhi^{n+1}_{m(n+1)-1}\right)\right]\nonumber\\
    &\qquad +\DX |\bar\rho-k| \left( \fhi^{n+1}_{m(n+1)-2}-\fhi^{n+1}_{m(n+1)-1}\right)+\left(\mathcal{O}(\DX^2) + \mathcal{O}(\DX\DT)\right)\|\rho_0\|_\infty \|\partial_x\fhi\|_\infty .
 \end{align}
 %\end{equation} 
We can easily estimate some of the terms above
\begin{align}
 & \DT  Q^n_0 \left(\fhi^{n+1}_{m(n+1)-1}-\fhi^{n+1}_{m(n+1)+1}\right)\leq 8 \DT \DX \|f\|_\infty\| \|\partial_x \fhi\|_\infty ; \\
 & \DT Q^{-, n}_{m(n)-1} \left(\fhi^{n+1}_{m(n+1)-2}-\fhi^{n+1}_{m(n+1)-1}\right)\leq 4 \DT \DX \|f\|_\infty\| \|\partial_x \fhi\|_\infty ;\\
 & \DT Q^{+, n}_{m(n)+3} \left(\fhi^{n+1}_{m(n+1)+1}-\fhi^{n+1}_{m(n+1)+2}\right)\leq 4 \DT \DX \|f\|_\infty\| \|\partial_x \fhi\|_\infty ; \\
 &\DT\RRR_R(k,\alpha_{n,\DT}) \left(\fhi^{n+1}_{m(n+1)+1}-\fhi^{n+1}_{m(n+1)-1}\right) \leq 8 \DT \DX \|f\|_\infty\| \|\partial_x \fhi\|_\infty ; \\
 &\DX |\bar\rho-k| 
 \left( \fhi^{n+1}_{m(n+1)-2}-\fhi^{n+1}_{m(n+1)-1}\right)\leq 2 \DX^2 \|\rho_0\|_\infty\| \|\partial_x \fhi\|_\infty .
\end{align}
Recalling that $\RRR_L(k,\alpha_{n,\DT}) +\RRR_R(k,\alpha_{n,\DT}) = 2f(k)$, and using the estimates above we get  
\begin{align}
&\sum_{j\neq m(n+1)} |\CCC^{n+1}_j| |\rho^{n+1}_j - k| \fhi^{n+1}_j \label{line1}\\
&- \sum_{j\neq m(n), m(n)\pm 1}% m(n)-1, m(n), m(n)+1} 
|\CCC^{n}_j| |\rho^{n}_j - k| \fhi^{n+1}_j- \Delta^n_L |\rho^n_L-k|\fhi^{n+1}_{\Delta^n_L}- \Delta^n_R |\rho^n_R-k|\fhi^{n+1}_{\Delta^n_R}\label{line2}\\
& \leq -\DT \left[\sum_{j\leq m(n+1)-2}  Q^{-, n}_{j} \left(\fhi^{n+1}_{j-1}-\fhi^{n+1}_j\right)  + \sum_{j\geq m(n+1)+2} Q^{+, n}_{j} \left(\fhi^{n+1}_{j-1}-\fhi^{n+1}_j\right)\right]\label{line3}\\
&\qquad + 2 \DT f(k) \fhi^{n+1}_{m(n+1)-1}+\mathcal{O}(\DX^2) +\mathcal{O}(\DX\DT).\label{line4}
\end{align}
To which we add and subtract the terms 
\begin{align*}
     \sum_{j\neq m(n), m(n)\pm 1}%{j\neq m(n)-1, m(n), m(n)+1} 
     |\CCC^{n}_j| |\rho^{n}_j - k| \fhi^{n}_j+ \Delta^n_L |\rho^n_L-k|\fhi^{n}_{\Delta^n_L}+ \Delta^n_R |\rho^n_R-k|\fhi^{n}_{\Delta^n_R},
\end{align*}
which can also be written as $\sum_{j\neq m(n)} |\CCC^{n}_j| |\rho^{n}_j - k| \fhi^{n}_j$, so that lines \eqref{line1}, \eqref{line2} become
\begin{align*}
  &\sum_{j\neq m(n+1)} |\CCC^{n+1}_j| |\rho^{n+1}_j - k| \fhi^{n+1}_j -\sum_{j\neq m(n)} |\CCC^{n}_j| |\rho^{n}_j - k| \fhi^{n}_j\\
&- \sum_{j\neq m(n)-1, m(n), m(n)+1} |\CCC^{n}_j| |\rho^{n}_j - k| (\fhi^{n+1}_j-\fhi^{n}_j)\\
&- \Delta^n_L |\rho^n_L-k|(\fhi^{n+1}_{\Delta^n_L}-\fhi^{n}_{\Delta^n_L})-\Delta^n_R |\rho^n_R-k|(\fhi^{n+1}_{\Delta^n_R}-\fhi^{n}_{\Delta^n_R}). 
\end{align*}

\end{proof}

\section{Numerical Examples and Validation}\label{sec:NumericalExamples}
We conclude the article by examining the performance and accuracy of the proposed numerical scheme applied to selected examples compared to the standard Godunov scheme without any moving mesh adaptation. 

\subsection{Examples}
To observe the changes in the slope of the interface and its interactions with classical waves, two examples are chosen.
  
Let $  f(\rho) = \rho(1-\rho)$ and consider the turning curve, 
\begin{equation*}
	%\label{eq:xi1}
	\xi(t) := \xi_0 + \int_{0}^{t}\xi'(s)ds,
\end{equation*}
with $\xi_0=-0.1$ and the following sets of initial data and slope of the discontinuity :
\begin{center}
	\begin{tabular}{ll|l}
		\label{tab: test_problems}
		&Initial data & slope of the discontinuity %in  \eqref{eq:xi1}
		\\
		\hline
        \boxed{\textbf{A}}: & $\begin{aligned}
						\label{eq:square1}
						\rho_0(x) &= \begin{cases}
										3/5, & \text{if}~  -1< x < 3/10\\
										9/10, & \text{if}~  3/10 \leq x <  1\,.
									\end{cases}  
					\end{aligned}$ 
					& $\xi'(s) = \begin{cases}
						2/5, & \text{for }~ 0 < s < 3/5 \\
						0, & \text{for }~ s \geq 3/5 \\ 	
					\end{cases}$ \\ \hline
	 \boxed{\textbf{B}}: & $\begin{aligned}
						\label{eq:square4}
						\rho_0(x) &= \begin{cases}
										9/10, & \text{if}~  -1/10\leq x \leq 1/2\\
										3/5, & \text{if}~  \text{otherwise.}
									 \end{cases}
					\end{aligned}$
				   &
					$\begin{aligned} 
						\xi'(s) &= \begin{cases}
											1/10, & \text{for}~ 0 < s \leq 3/10, \\
											1/4, & \text{for}~ 3/10 < s \leq 3/5, 	\\
											0, & \text{for}~  s > 3/5.
										\end{cases}
					\end{aligned}$	\\	\hline 
    %                 \boxed{\textbf{C}}: & $\begin{aligned}
	% 					\label{eq:square5}
	% 					\rho_0(x) &= \begin{cases}
	% 									0.9, & \text{if}~  -0.1\leq x \leq 0.5\\
	% 									0.6, & \text{if}~  \text{otherwise.}
	% 								 \end{cases}
	% 				\end{aligned}$
	% 			   &
	% 				$\begin{aligned} 
	% 					\xi'(s) &= \begin{cases}
	% 										0.1, & \text{for}~ 0 < s \leq 0.3, \\
	% 										0.25, & \text{for}~ 0.3 < s \leq 0.6, 	\\
	% 										-1.5, & \text{for}~  0.6 < s \leq 1, \\ 
    %                                         0, & \text{for}~ s>1.
	% 									\end{cases}
	% 				\end{aligned}$	\\	\hline 
	 \end{tabular}
\end{center}
 The exact solutions are constructed using the method of characteristics. As $\rho(0, x) > 1/2$, the boundary conditions require that rarefaction waves enter the boundary points $\{(t, -1^+)\}$ and $\{(t, 1^-)\}$ such that $\rho(t, \pm 1)\geq 1/2$ for all $t>0.$ 
\medskip

Example \boxed{\textbf{A}}: For $0<t\leq 4/9$, two undercompressive shocks, $\sigma_L(t)=-1/10(1+4t)$ and $\sigma_R(t)=-1/10(1-4t)$, separated by vacuum, are generated at $x=\xi(0)$, while at $x=0.3$, a single shock $\sigma_1(t)=0.3 -0.5t$ is also created. Observing that $\xi(t) = \sigma_R(t)$ and since $\dot{\sigma}_L > \dot{\sigma}_1$, the interface interacts with the shock $\sigma_1$ at $t=4/9$. The solution for $ t\in [0, 4/9)$ writes, 
\begin{equation*}
    \rho(t,x) = \begin{cases}
        0.6 & \text{if } -1 < x < \sigma_L(t)\\
        0 & \text{ if } -\sigma_L(t) \leq x < \xi(t)\\
        0.6 & \text{ if } \xi(t) \leq x < \sigma_R(t) \\
        0.9 & \text{ if }  \sigma_R(t) \leq x < 1.
    \end{cases}
\end{equation*}
At $t=4/9$, a slight decrease in density is observed and a solution to the Riemann problem at $\xi$ is calculated with an intermediate state $\rho_M = 1/10\left(7-1/2\sqrt{\frac{88}{10}}\right)$ for the time period between $4/9$ and $6/10$. The solution in this time frame is as follows:
\begin{equation*}
    \rho(t,x) = \begin{cases}
        0.6 & \text{if } -1 < x < \sigma_L(t),\\
        0 & \text{ if } -\sigma_L(t) \leq x < 7/90 - (t-4/9),\\
        \frac{x-7/90}{2(t-4/9)} + \frac{1}{2} & \text{ if } 7/90 - (t-4/9) \leq x < 7/90 - \dot{R}_{\max} (t-4/9), \\
        \rho_M & \text{ if }   7/90 - \dot{R}_{\max} (t-4/9) \leq x < \xi(t),\\
        0.9 & \text{ if }   \xi \leq x < 1.
    \end{cases}
\end{equation*}
Here $\dot{R}_{\max} = 1/5\left(2-1/2\sqrt{\frac{88}{10}}\right)$, denotes the maximum slope of the rarefaction fan. 
\medskip

Example \boxed{\textbf{B}}:
At $x=0.5$, a rarefaction is formed with a value of $1/2 +f'(\rho)t$, where $\rho$ is in the range of $[0.6, 0.9]$. Additionally, two shocks are created, $\sigma_L(t) = -0.1 + 0.4 t$ and $\sigma_R = -0.1 + 0.1 t$, which originate from $\xi(0) = -0.1$ for $0< t \leq 3/10$. The solution is as follows
\begin{equation*}
    \rho(t,x) = \begin{cases}
        6/10 & \text{ if } -1+8t/10 \leq x \leq \sigma_L(t),\\
        0 & \text{ if } \sigma_L(t) < x < \xi(t), \\
        0.9 & \text{ if } \xi(t) < x \leq R^0_{\min}(t), \\
        (t-x)/2t&\text{ if }  \dot{R}^0_{\min}(t) < (x-1/2)/t \leq \dot{R}^0_{\max}(t),\\
        6/10, &\text{ if } 1/2( 1- t/5) < x < 1-8t/10,
    \end{cases}
\end{equation*}
where $ \dot{R}^0_{\min}$ and $ \dot{R}^0_{\max}$ represents the minimum and maximum speeds of the $R^0.$ For $3/10\leq t \le 6/10$, the solution writes
\begin{equation*}
    \rho(t,x) = 
    \begin{cases}
        6/10 & \text{ if } -1+8t/10 \leq x \leq \sigma_L(t),\\
        0 & \text{ if } \sigma_L(t) < x < R^1_{\min}(t), \\
        \frac{1}{2} + \frac{x+1/40}{2(t-3/10)}, & \text{ if } \dot{R}^1_{\min}(t) \leq x \leq \dot{R}^1(t) \\
        \rM, & \text{ if } R^1_{\max}(t) \leq x < \xi(t) \\
        9/10, &\text{ if } \xi(t) \leq x < R^0_{\min}(t)\\
        (t-x)/2t&\text{ if }  \dot{R}^0_{\min}(t) < (x-1/2)/t \leq \dot{R}^0_{\max}(t),\\
        6/10 & R^0_{\max}(t) \leq x < 1-8t/10,
    \end{cases}
\end{equation*}
by noting that a Riemann problem at $\xi$ is solved with an intermediate state of $\rM = \frac{1}{80}(50 - \sqrt{1636})$ and a small rarefaction denoted $R^1(t) = -\frac{1}{40} - f'(\rho) \left(t-\frac{3}{10}\right)$ for $\rho\in [0, \rM]$ when it interacts with the shock $\sigma_R$. This occurs for $3/10\leq t < 6/10$.
The approximate solutions of these two examples are depicted in the $x-t$ can be found in Figure \ref{fig: color map example V_VI}. However, we exclusively display the simulation results of Example $\boxed{\textbf{A}}$ at selected times in Figure \ref{fig:eg_A_rho_x}. 
To illustrate the effectiveness of the scheme in handling discontinuities with negative slopes, particularly during wave interactions with $\xi$, we present in Figure \ref{fig:eg_C_rho_x} the simulation results of a third example, 
denoted as Example $\boxed{\textbf{C}}$, where the initial density $\rho_0$ is defined as in Example $\boxed{\textbf{A}}$. However, the slope of the flux interface is specified as follows:  
\begin{align} 
    \xi'(s) &= \begin{cases}
                        1/10, & \text{for}~ 0 < s \leq 3/10, \\
                        1/4, & \text{for}~ 3/10 < s \leq 6/10, 	\\
                        -3/2, & \text{for}~  6/10 < s \leq 1, \\ 
                        0, & \text{for}~ s>1.
                    \end{cases}
\end{align}

\begin{figure}[htbp]
	\centering
	\includegraphics[scale=0.45]{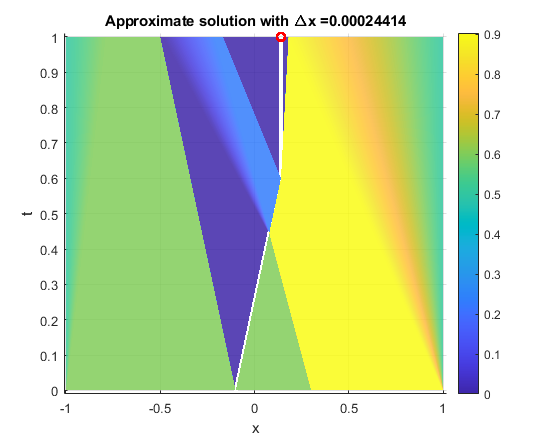} 
	\includegraphics[scale=0.44]{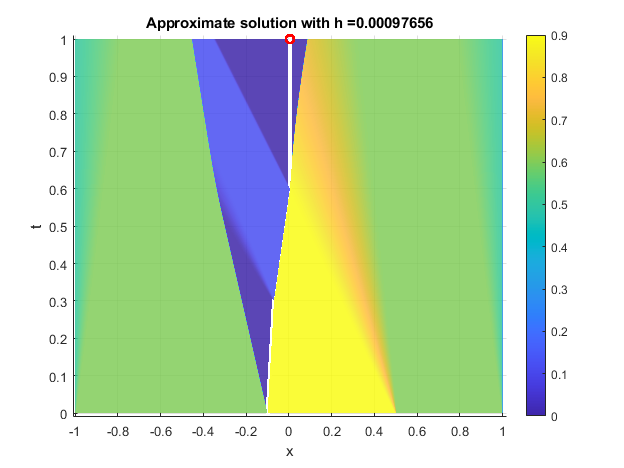}
	\caption{\small{Approximate solution of Examples \boxed{\textbf{A}} (left) and \boxed{\textbf{B}} (right) in the $x-t$ plane.}}
	\label{fig: color map example V_VI}
\end{figure}

\begin{center}
\begin{figure}
    \begin{tabular}{ll}
        \includegraphics[width=.5\linewidth,valign=m]{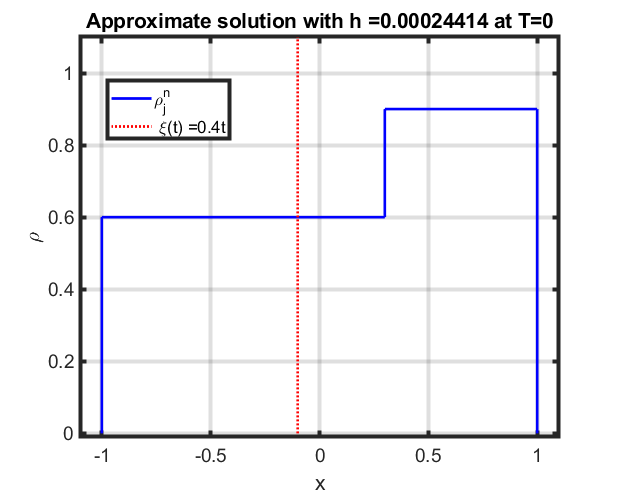} & \includegraphics[width=.5\linewidth,valign=m]{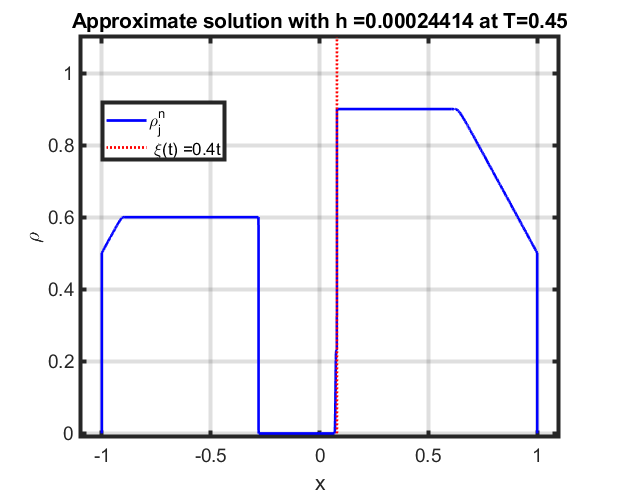}\\
        \includegraphics[width=.5\linewidth,valign=m]{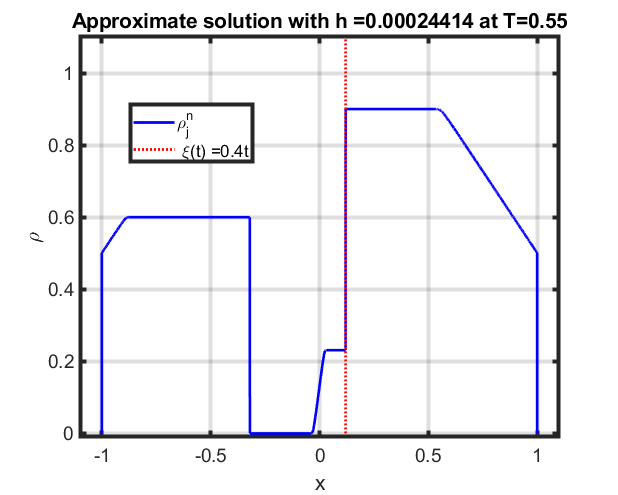} & \includegraphics[width=.5\linewidth,valign=m]{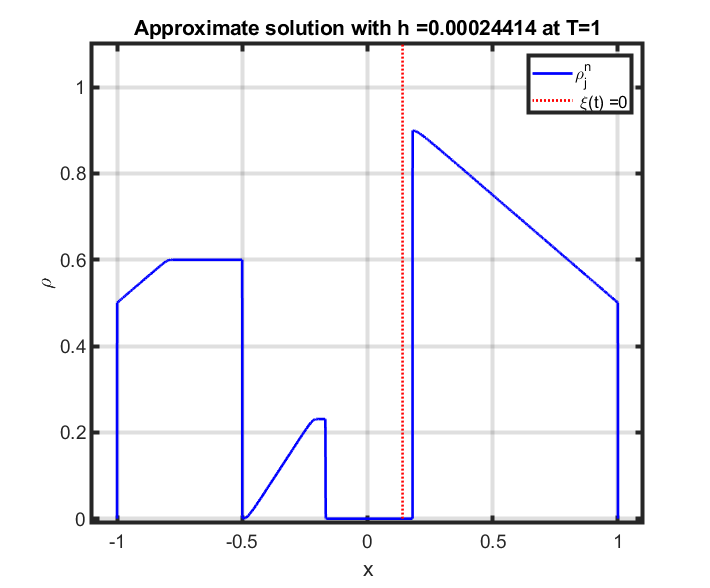}
      \end{tabular} 
      \caption{\small{Evolution of the approximate solution of Examples \boxed{\textbf{A}} in the $\rho-x$ plane for $t=0,~0.45, ~0.55, $ and $1.0$.}}
      \label{fig:eg_A_rho_x}
\end{figure} 
\end{center}

\begin{center}
    \begin{figure}
        \begin{tabular}{ll}
            \includegraphics[width=.5\linewidth,valign=m]{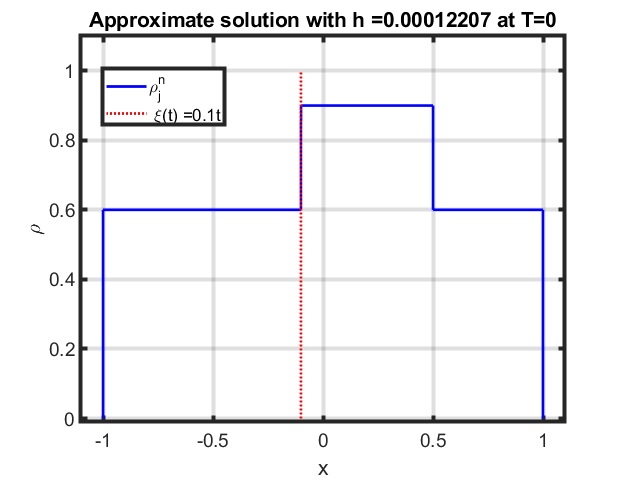} & \includegraphics[width=.5\linewidth,valign=m]{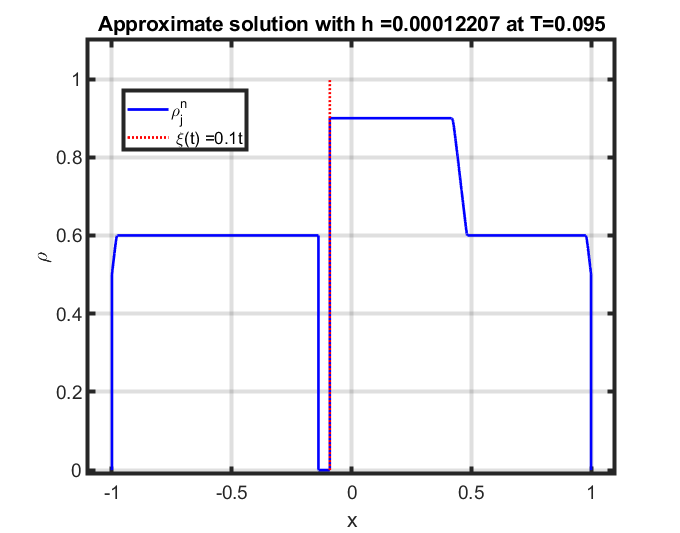}\\
            \includegraphics[width=.5\linewidth,valign=m]{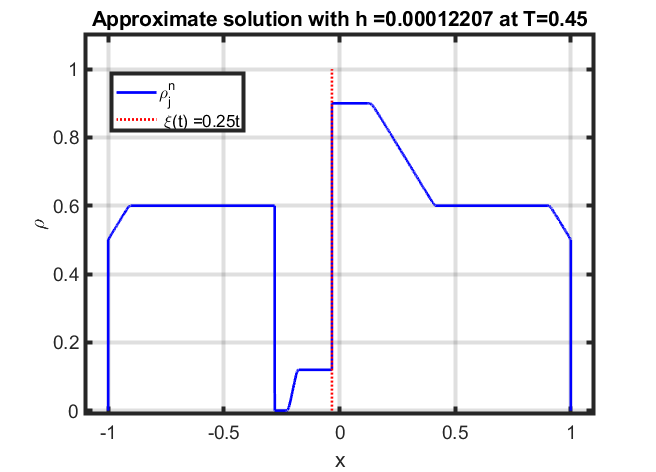} & \includegraphics[width=.5\linewidth,valign=m]{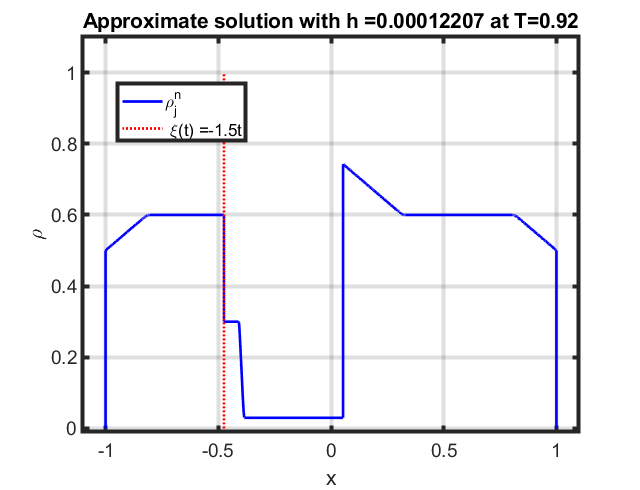}
          \end{tabular} 
          \caption{\small{Evolution of the approximate solution of Examples \boxed{\textbf{C}} in the $\rho-x$ plane for $t=0,~0.095, ~0.45, $ and $0.92$.}}
          \label{fig:eg_C_rho_x}
    \end{figure} 
    \end{center}

\subsection{Order of convergence}
In this section, we assess the accuracy of the numerical scheme we proposed by estimating the $\Lp{1}$ errors and deducing the order of convergence on Examples \boxed{\textbf{A}} and \boxed{\textbf{B}}. These examples are carefully chosen to enable us to observe the performance of our scheme during collisions between the turning curve and incoming waves. 
%
%Let $\Omega_k$ be the mesh, $\mathcal{I}_j^k$ a finite volume cell of size $\delta^k_j$, and $N_k$ be the total number of cells at each discrete time level $n=k$. 
The numerical $\Lp{1}$-norm at time level $n = T/\DT$ writes 
\begin{equation}
    \label{eq:L1-error}
    Err(n,\DX) = \sum_{j\not = m(n)\,,\ |j|\le \frac 1\DX}\left|\rho(T, x^n_j)-\rho^{n}_j\right|\DX,
\end{equation}
where $\rho(T,x)$ is the exact solution which we use as the reference solution and $\rho_j^{n}$ is the approximate solution obtained with our scheme. In these simulations, we use a CFL number of 0.45, and plot the log-log graph of the errors against $N$ values for Examples A and B below at times where the interface collides with the incoming waves. We first show the graph of the approximate solution in the $x-t$ plane for examples A and B. %We can observe in example A that a shock wave that started at $x_0=0.3$ travelled left with a speed lower than $\dot{\xi}$ collides with the interface at $t=t^*$ where as in example B, the discontinuity collides with a rarefaction centered at $x_0=0.3.$   
\begin{figure}[htbp]
    \centering
	\includegraphics[scale=0.49]{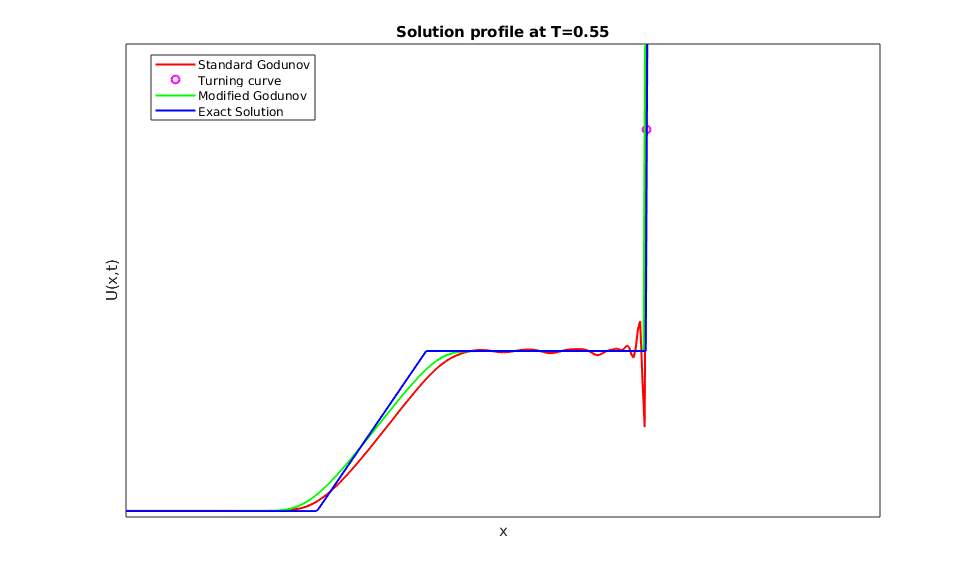}
	\caption{\small{A zoomed view of exact solution and the numerical solutions with the standard Godunov's flux and the modified flux at $T=0.55$. The violet circle represents the position of the interface.}}
	\label{fig:interaction-at-xi}
\end{figure}

We analyze the performance of our scheme with the standard Godunov flux by comparing Example A at $T=0.55$, where a collision is seen in figure \ref{fig:interaction-at-xi}. 
The figure shows that the standard Godunov flux produces spurious oscillations at $x=\xi(t)$ when a shock wave from the right interacts with $\xi$. In contrast, the modified scheme not only eliminates these oscillations, but also accurately captures the non-zero intermediate state that is formed after the interaction.
\begin{table}[htbp]
	\centering
\begin{tabular}{|c|c|c|c|c|}
	\hline
	& \multicolumn{2}{c|}{Example A} &\multicolumn{2}{c|}{Example B}\\
	\hline 
	%Grid, 
	$\DX$ & $Err(\DX)$ & $\frac{\ln \left(Err(\DX)\right)}{\ln\left(\DX\right)}$ & $Err(\DX)$ &$\frac{\ln \left(Err(\DX)\right)}{\ln\left(\DX\right)}$\\
	\hline
	$1.5625e-02$  & $3.4222e-02$ & -0.79 & $7.5633e-3$& -0.79 \\
	\hline
	$3.9062e-03$ & $1.0174e-02$ & -0.80 & $4.2297e-3$& -0.79 \\
	\hline
	$9.7656e-04 $ & $3.1818e-03$ & -0.81 & $3.1088e-3$& -0.79\\
	\hline
	$ 4.8828e-04$ & $1.7271e-03$ & -0.81 & $2.4516e-3$& -0.79\\
	\hline
	$2.4414e-04$ & $9.7091e-04$ & -0.81 & $1.8074e-3$& -0.79\\
	\hline
	$1.2207e-04$ & $5.2053e-04$ & -0.81 & $1.2818e-3$& -0.78\\
	\hline
\end{tabular}
\caption{\small \emph{$\Lp{1}-$errors at $T=0.55$, for example A and at $T=0.58$ for example B with mesh size from $\DX = 1/500, \ldots, 1/5000$.}}
\end{table}
\begin{figure}[htbp]
    \label{fig:Hughes_model_compare}
	\centering
	\includegraphics[scale = 0.52]{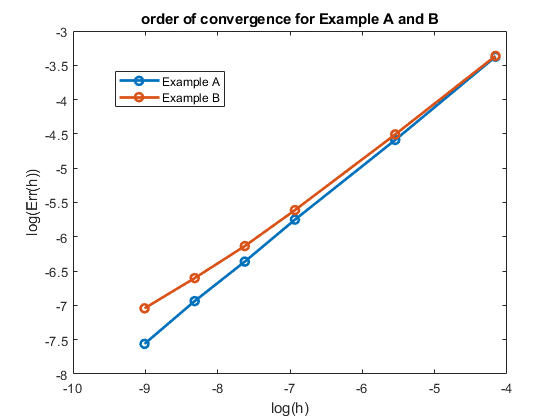}
	\caption{\small{The log-log graph of $\Lp{1}$ error against $\DX$ on Example A and B at $T=0.55$ and $T=0.58$ respectively.}}
\end{figure}

\clearpage

%\nocite{*}
%\bibliographystyle{acm}
%\bibliography{sn-bibliography.bib}

\printbibliography

\end{document}